\newtheorem{Proposition}{Proposition}[section]
\newtheorem{Lemma}[Proposition]{Lemma}
\newtheorem{Theorem}[Proposition]{Theorem}
\newtheorem{Corollary}[Proposition]{Corollary}
\newtheorem{Remark}[Proposition]{Remark}
\newbox\squ  
\def\Tab{\mathscr{T}}
\def\ab{\operatorname{ab}}
\def\ev{\operatorname{ev}}
\def\ch{\operatorname{ch}}
\def\lmod{\operatorname{\!-mod}}
\def\rmod{\operatorname{mod-\!}}
\def\ev{{\operatorname{ev}}}
\def\C{{\mathbb C}}
\def\Q{{\mathbb Q}}
\def\Z{{\mathbb Z}}
\def\hom{{\operatorname{Hom}}}
\def\ann{{\operatorname{Ann}}}
\def\Prim{{\operatorname{Prim}U(\mathfrak{g})}}
\def\lPrim{{\operatorname{Prim}_\lambda U(\mathfrak{g})}}
\def\grk{{\operatorname{rk}\:}}
\def\rk{{\operatorname{rk}}}
\def\gkdim{{\operatorname{gkdim}\:}}
\def\bz{\hbox{\boldmath{$0$}}}
\def\eps{{\varepsilon}}
\def\phi{{\varphi}}
\def\phi{{\varphi}}
\def\row{\operatorname{row}}
\def\col{\operatorname{col}}
\def\la{{\lambda}}
\def\La{{\Lambda}}
\def\VA{\mathcal{V\!A}}
\def\@underbar#1{\settowidth{\@tempdimb}{#1}\@tempdimb=0.8\@tempdimb
                   \ooalign{#1\crcr
                         \hfil\rule[-.5mm]{\@tempdimb}{.4pt}\hfil}}
\newdimen\hoogte    \hoogte=8pt    
\newdimen\breedte   \breedte=8pt   
\newdimen\dikte     \dikte=0.4pt    
\newenvironment{young}{\begingroup
       \def\vr{\vrule height0.8\hoogte width\dikte depth 0.2\hoogte}
       \def\fbox##1{\vbox{\offinterlineskip
                    \hrule height\dikte
                    \hbox to \breedte{\vr\hfill##1\hfill\vr}
                    \hrule height\dikte}}
       \vbox\bgroup \offinterlineskip \tabskip=-\dikte \lineskip=-\dikte
            \halign\bgroup &\fbox{##\unskip}\unskip  \crcr }
       {\egroup\egroup\endgroup}
\def\diagram#1{\relax\ifmmode\vcenter{\,\begin{young}#1\end{young}\,}\else%
              $\vcenter{\,\begin{young}#1\end{young}\,}$\fi}
\begin{document}

\title[{\rm M\oe glin and Goldie}]{\boldmath M\oe glin's theorem and 
Goldie rank polynomials in Cartan type $A$}

\author{Jonathan Brundan}

\address{Department of Mathematics, University of Oregon, Eugene, OR 97403, USA}
\email{brundan@uoregon.edu}

\thanks{2000 {\it Mathematics Subject Classification}: 17B35.}
\thanks{Research supported in part by NSF grant no. DMS-0654147}

\begin{abstract}
We use the theory of finite $W$-algebras associated to nilpotent orbits in the Lie
algebra
$\mathfrak{g} = \mathfrak{gl}_N(\C)$ to
give another proof of M\oe glin's 
theorem about completely prime
primitive ideals in the enveloping algebra $U(\mathfrak{g})$.
We also make some new observations about Joseph's Goldie rank
polynomials in Cartan type $A$.
\end{abstract}

\maketitle

\section{Introduction}

The space 
$\Prim$ of primitive ideals in the universal enveloping algebra
of the Lie algebra $\mathfrak{g} := \mathfrak{gl}_N(\C)$ has 
an unbelievably rich structure
which has been studied intensively since the 1970s.
In this article we revisit several of the foundational results about $\Prim$
from the perspective of the theory
of finite $W$-algebras that has been developed in the last few years
by Premet \cite{Pslice, Pslodowy, Pfinite, Pabelian, Pnew}, 
Losev \cite{Lsymplectic, Lclass, LcatO, L1D} 
and others \cite{BrG,BG,BGK,BKshifted,BKrep,GG,Gin}.
This article
was inspired by the most recent breakthrough of Premet in \cite{Pnew}, so
we start by discussing that in more detail.

Given a nilpotent element $e \in \mathfrak{g}$
there is associated a finite $W$-algebra $U(\mathfrak{g},e)$, and
Skryabin
proved that the category of
$U(\mathfrak{g},e)$-modules is equivalent to a certain category of generalized
Whittaker modules for $\mathfrak{g}$; see \cite{Pslice,Skryabin}. If $L$ is any irreducible $U(\mathfrak{g},e)$-module,
we define $I(L) \in \Prim$ by applying Skryabin's
equivalence of categories to get an irreducible $\mathfrak{g}$-module, 
then taking the annihilator of that module.
Premet's theorem \cite[Theorem B]{Pnew}
can be stated for $\mathfrak{g} = \mathfrak{gl}_N(\C)$
as follows.

\begin{Theorem}[Premet]\label{pti}
If 
$L$ is a finite dimensional irreducible $U(\mathfrak{g},e)$-module
and $I := I(L) \in \Prim$, then
the Goldie rank of $U(\mathfrak{g}) / I$ is equal to the dimension of $L$.
\end{Theorem}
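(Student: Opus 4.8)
The plan is to compute the two sides of the asserted equality separately — one via the Kazhdan filtration and the geometry of the Slodowy slice, the other via Joseph's description of Goldie rank through characteristic cycles — and then to match them.

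Write $\chi\in\mathfrak{g}^*$ for the functional attached to $e$ by the trace form, fix an $\mathfrak{sl}_2$-triple $(e,h,f)$, and let $\mathfrak{m}$ be the auxiliary nilpotent subalgebra of dimension $\tfrac12\dim\mathcal{O}$ used to define $U(\mathfrak{g},e)$, where $\mathcal{O} := G\cdot e$. Under Skryabin's equivalence, $M := Q_\chi\otimes_{U(\mathfrak{g},e)}L$ is an irreducible $\mathfrak{g}$-module with $\operatorname{Ann}_{U(\mathfrak{g})}M = I$, and $L$ is recovered as the space $\operatorname{Wh}(M)$ of Whittaker vectors $\{v\in M : (x-\chi(x))v = 0\ \text{for all}\ x\in\mathfrak{m}\}$, equipped with its natural $U(\mathfrak{g},e)$-action; in particular $\dim L = \dim\operatorname{Wh}(M)$. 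Since $L$ is finite-dimensional, $\operatorname{Wh}(M)$ is finite-dimensional, and — using that the associated variety of $I(L)$ is $\overline{\mathcal{O}}$ together with the bound $\operatorname{GKdim}M\ge\tfrac12\dim\mathcal{O}$ — this forces $M$ to be holonomic, with $\mathcal{V}(M) = \overline{\mathcal{O}}$ for any good filtration.

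Next I would identify $\dim\operatorname{Wh}(M)$ with a multiplicity. Give $M$ the good Kazhdan filtration inherited from $Q_\chi$; its associated graded is the coordinate ring of the affine subspace $\chi+\mathfrak{m}^\perp$ of $\mathfrak{g}^*$, which fibres over the Slodowy slice $\mathcal{S} := e+\mathfrak{g}^f$. The functor $\operatorname{Wh}(-)$ is exact on holonomic modules and corresponds, on the level of associated graded, to restriction to $\mathcal{S}$: there is a natural isomorphism $\operatorname{gr}\operatorname{Wh}(M)\cong(\operatorname{gr}M)\otimes_{\C[\mathfrak{g}^*]}\C[\mathcal{S}]$. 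Because $\mathcal{S}$ meets $\mathcal{O}$ transversally at $e$ and $\dim\mathcal{S} = \operatorname{codim}_{\mathfrak{g}^*}\mathcal{O}$, the dimension of this space equals the coefficient $m_{\mathcal{O}}(M)$ of $\overline{\mathcal{O}}$ in the characteristic cycle of $M$, i.e. the length of the localization of $\operatorname{gr}M$ at the generic point of $\mathcal{O}$. Hence $\dim L = m_{\mathcal{O}}(M)$.

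It then remains to prove $m_{\mathcal{O}}(M) = \operatorname{rk}(U(\mathfrak{g})/I)$, which is the heart of the matter. One direction is formal: by Joseph's theorem the Goldie rank of $U(\mathfrak{g})/I$ divides $m_{\mathcal{O}}(M')$ for every holonomic $\mathfrak{g}$-module $M'$ with $\operatorname{Ann}M' = I$, so $\operatorname{rk}(U(\mathfrak{g})/I)$ divides $\dim L$. For the reverse it is enough to show that $M = S(L)$ realises the minimal multiplicity — equivalently, that microlocalizing $\operatorname{gr}M$ at the generic point of $\mathcal{O}$ produces a simple module over the microlocal ring, i.e. that $M$ behaves over the Goldie ring of fractions $M_n(D)$ of $U(\mathfrak{g})/I$ (with $n = \operatorname{rk}(U(\mathfrak{g})/I)$) like the column module $D^n$. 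Heuristically this should hold because $M$ is generated by $\operatorname{Wh}(M)$ and $Q_\chi/IQ_\chi$ is, generically over $\mathcal{O}$, free of the smallest possible rank. I expect this last step to be the main obstacle: one has to rule out any spurious multiplicative constant when converting the non-commutative Goldie rank into the commutative multiplicity along $\mathcal{O}$, and making this precise seems to require both the detailed structure of the Kazhdan filtration on $Q_\chi$ and Joseph's additivity principle for Goldie ranks — or, failing a uniform argument, a reduction to the case where $e$ is distinguished in a Levi subalgebra, where all the relevant modules are explicit.
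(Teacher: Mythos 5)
Your plan reproduces, in outline, Premet's own argument in \cite{Pnew}, which the paper explicitly sets out to avoid (``quite different from Premet's argument\dots which involves reduction modulo $p$ techniques''). The first two steps are sound: Skryabin's equivalence does give $L\cong\operatorname{Wh}(M)$, $M$ is holonomic with associated variety $\overline{G\cdot e}$ by Theorem~\ref{t31}, and the identification of $\dim\operatorname{Wh}(M)$ with the multiplicity $m_{\mathcal O}(M)$ of $\mathcal O$ in the characteristic cycle via restriction of $\operatorname{gr}M$ to the Slodowy slice is a genuine theorem (it appears in work of Ginzburg, Losev and Premet), though it is not at all formal and you would have to cite or re-prove it.

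The genuine gap is your final step, and you say so yourself: you get the divisibility $\grk(U(\mathfrak{g})/I)\mid m_{\mathcal O}(M)=\dim L$, but the reverse inequality is left at the level of a heuristic. This is precisely where general considerations stop and a type-$A$ input is forced: Premet proves the divisibility for arbitrary reductive $\mathfrak{g}$ and obtains equality only when the Goldie field of $U(\mathfrak{g})/I$ is the field of fractions of a Weyl algebra, which for $\mathfrak{gl}_N$ is Joseph's solution of the weak form of Kostant's problem, $\grk\mathscr L(L(\alpha),L(\alpha))=\grk U(\mathfrak{g})/I(\alpha)$. Your heuristic (``$Q_\chi/IQ_\chi$ is generically over $\mathcal O$ free of the smallest possible rank'') does not supply this: there is no a priori reason a single holonomic module annihilated by $I$ should realize the minimum of $m_{\mathcal O}(M')$ over all such $M'$, and in fact the hard content of Joseph's work is exactly to compute that minimum. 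So the proposal, if completed along the lines you indicate, would be a variant of Premet's proof and would still need the same Joseph input.

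The paper takes an altogether different route that sidesteps characteristic cycles and microlocalization. It computes both sides of the equality as the same alternating sum of Weyl dimension polynomials: on the $W$-algebra side, exactness of the Whittaker coinvariants functor $\overline{H}_0(\mathfrak{m}_\chi,?)$ on parabolic category $\mathcal O_\pi$ (Corollary~\ref{lynchc}) together with $\overline{H}_0(\mathfrak{m}_\chi,M(A))\cong V(A,e)$ (Lemma~\ref{gform}) yields $\dim\overline{H}_0(\mathfrak{m}_\chi,L(A))=\sum_B(L(A):M(B))h_\pi(\gamma(B))$; on the enveloping-algebra side, Joseph's additivity principle and Kostant-problem solution invert to give the same formula for $\grk U(\mathfrak{g})/I(\gamma(A))$ when $A$ is semi-standard. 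The match, plus the labelling Theorem~\ref{labels}, proves the statement. This is cleaner and completely within the combinatorial/category-$\mathcal O$ framework; it also has the virtue of producing the explicit Goldie rank polynomial (\ref{bform}) as a by-product, which your geometric approach does not.
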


Premet actually works with the finite $W$-algebra 
attached to a nilpotent element in an arbitrary reductive
Lie algebra, showing in analogous notation in
that general context
that $\grk U(\mathfrak{g}) / I$ always {\em divides} $\dim L$,
with equality if the Goldie field of 
$U(\mathfrak{g}) / I$ is isomorphic to the ring
of fractions of
a Weyl algebra.
The fact that this
condition for equality 
is satisfied for all $I \in \Prim$
when $\mathfrak{g} = \mathfrak{gl}_N(\C)$
follows from a result of Joseph from
\cite[$\S$10.3]{Jkos}.
A key step in Joseph's proof
involved showing in \cite[$\S$9.1]{Jkos}
that 
the ring of fractions of $U(\mathfrak{g}) / \ann_{U(\mathfrak{g})} M$
is isomorphic to the ring of fractions of $\mathscr L(M,M)$, the
$\operatorname{ad}\mathfrak{g}$-locally finite maps from $M$ to
itself, for all irreducible highest weight modules $M$. This is the weak
form of Kostant's problem;
see also \cite[12.13]{Je}.

In the same article, Joseph proved an additivity principle for certain
Goldie ranks which, when combined with the solution of the
weak form of Kostant's problem just mentioned,
led Joseph to the discovery of a systematic method for computing the
Goldie ranks of all 
primitive quotients of enveloping algebras in Cartan type $A$; 
see \cite[$\S$8.1]{Jkos}. Soon afterwards in \cite[$\S$5.1]{Jgoldie}, 
Joseph worked out a general
approach to compute Goldie ranks of primitive quotients in arbitrary
Cartan types via his remarkable theory of 
{Goldie rank polynomials}. These polynomials involve some mysterious
constants which even today
are only determined explicitly in Cartan type $A$; see the discussion
in \cite[$\S$1.5]{JIII} and use \cite[Lemma 5.1]{Jcyclic} to treat
Cartan type $A$.
Much more recently, 
in \cite[$\S$8.5]{BKrep}, we described a method for
computing the
dimensions of all 
finite dimensional irreducible representations of finite
$W$-algebras, again only in Cartan type $A$. As should come as no surprise given
Theorem~\ref{pti}, 
these two methods, Joseph's method
for computing Goldie ranks in Cartan type $A$
and our method for computing dimensions,
reduce after some book-keeping 
to performing exactly the same computation with Kazhdan-Lusztig
polynomials.
In the last section of the article, we will use this observation to give another
proof of Theorem~\ref{pti},
quite different from
Premet's argument in \cite{Pnew}
which involves reduction
modulo $p$ techniques.

Premet's theorem allows several other classical problems
about $\Prim$ to be attacked using finite $W$-algebra techniques. 
Perhaps our most striking accomplishment along these lines 
is a new proof of M\oe glin's
theorem from \cite{M}, asserting that all completely prime primitive
ideals of $U(\mathfrak{g})$ are induced from one dimensional
representations of parabolic subalgebras. In the rest of the
introduction we will discuss this in more detail
and formulate some other results about
Goldie ranks of primitive quotients in Cartan type $A$ 
obtained using the
link to finite $W$-algebras.
We will also make some 
other apparently new observations about Joseph's Goldie rank
polynomials.
Before we give any more details, we introduce 
some combinatorial language.
\begin{itemize}
\item
A {\em tableau} $A$ 
is a left-justified array of complex numbers with
$\lambda_1$ entries in the bottom row, $\lambda_2$ entries in the next row up, and so on, for some partition 
$\lambda = (\lambda_1 \geq \lambda_2 \geq \cdots)$ of $N$;
we refer to $\lambda$ as the {\em shape} of $A$.
\item 
Two tableaux $A$ and $B$ are {\em row-equivalent}, denoted
$A \sim B$, if one can be obtained from the other by permuting entries within rows.
\item
A tableau is {\em column-strict} if its entries are strictly increasing 
from bottom to top within each column with respect to the partial order $\geq$ on $\C$
defined by $a \geq b$ if $a-b \in \Z_{\geq 0}$.
\item
A tableau is {\em column-connected} if every entry in every row apart from
the bottom row
is one more than the entry immediately below it.
\item
A tableau is {\em column-separated} if it is column-strict
and no two of its columns are linked, where we say that two
columns are {\em linked} if the sets $I$ and $J$ 
of entries from the two columns satisfy 
the following:
\begin{itemize}
\item[$\circ$]
if  $|I| > |J|$ then $i > j > i'$
 for some
$i,i' \in I \setminus J$ and $j \in J \setminus I$;
\item[$\circ$]
if $|I| < |J|$ then $j' < i < j$
 for some
$i \in I \setminus J$ and $j,j' \in J \setminus I$;
\item[$\circ$]
if $|I| = |J|$ then either
$i > j > i' > j'$ or $i' < j' < i < j$
for some $i,i' \in I \setminus J$ and $j,j' \in J \setminus I$.
\end{itemize}
\item
A tableau is {\em standard} if its entries are 
$1,\dots,N$ and they increase
from bottom to top in
each column and from left to right in each row.
\end{itemize}

Now go back to the Lie algebra $\mathfrak{g} = \mathfrak{gl}_N(\C)$.
Let $\mathfrak{t}$ and $\mathfrak{b}$ be the usual choices
of Cartan and Borel subalgebras consisting of diagonal and upper triangular matrices in $\mathfrak{g}$, respectively.
Let $W := S_N$ 
be the Weyl group of $\mathfrak{g}$ with respect to $\mathfrak{t}$, 
identified with the group of all permutation matrices 
in $G := GL_N(\C)$.
Let $\ell$ be the usual length function and
$w_0 \in W$ be the longest element.
Let $\eps_1,\dots,\eps_N \in \mathfrak{t}^*$ be
the dual basis to the basis $x_1,\dots,x_N\in\mathfrak{t}$
given by the diagonal matrix units.
Given any tableau $A$, we attach a weight
$\gamma(A) \in \mathfrak{t}^*$ by letting $a_1,\dots,a_N \in \C$
be the sequence obtained by
reading the entries of $A$ in order down columns starting with the
leftmost column,
then setting 
\begin{equation}\label{newgamma}
\gamma(A) := \sum_{i=1}^N a_i \eps_i.
\end{equation}
Finally let $\Phi^+$ be the positive roots corresponding to $\mathfrak{b}$
and set
\begin{equation}\label{rhodef}
\rho := -\eps_1-2\eps_2-\cdots-N\eps_N,
\end{equation}
which is the usual half-sum of positive roots up to a
convenient normalization.

Given $\alpha \in \mathfrak{t}^*$, let $L(\alpha)$ denote the 
{\em irreducible} $\mathfrak{g}$-module generated by a $\mathfrak{b}$-highest weight vector of weight $\alpha - \rho$.
By Duflo's theorem \cite{Duflo}, the map
\begin{equation*}
I:\mathfrak{t}^* \rightarrow \Prim,
\qquad
\alpha \mapsto I(\alpha) := \ann_{U(\mathfrak{g})} L(\alpha)
\end{equation*}
is surjective. 
In \cite[Th\'eor\`eme 1]{Jduflo} (see also \cite[5.26(1)]{Je}),
Joseph described the fibers of this map explicitly
via the Robinson-Schensted algorithm, as follows.
Take $\alpha \in \mathfrak{t}^*$ and set $a_i := x_i(\alpha)$.
Construct a tableau $Q(\alpha)$ 
by starting from the empty tableau $A_0$, then recursively
inserting the numbers $a_1,\dots,a_N$
into the bottom row using the Schensted insertion algorithm.
So at the $i$th step we are given a tableau $A_{i-1}$
and need to insert $a_i$ into the bottom row of $A_{i-1}$.
If there is no entry $b > a_i$ on this row
then we simply add $a_i$ to the end of the row;
otherwise we replace the leftmost $b > a_i$ on the row with $a_i$,
then repeat the procedure to 
insert $b$ into the next row up.
It is clear from this construction that $Q(\alpha)$ is always row-equivalent to a column-strict tableau.
Now Joseph's fundamental result is that
\begin{equation}\label{josephs}
I(\alpha) = I(\beta)
\quad\Leftrightarrow\quad
Q(\alpha) \sim Q(\beta)
\end{equation}
for any 
$\alpha,\beta \in \mathfrak{t}^*$. 

Thus we have a complete classification of the primitive ideals in 
$U(\mathfrak{g})$.
Our first new result identifies the 
primitive ideals $I$ in this classification that are completely
prime, i.e. the ones for which the quotient $U(\mathfrak{g}) / I$
is a domain.

\begin{Theorem}\label{mt}
For $\alpha \in \mathfrak{t}^*$, the primitive ideal $I(\alpha)$
is completely prime if and only if $Q(\alpha)$ is row-equivalent to a
column-connected tableau.
\end{Theorem}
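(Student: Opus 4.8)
The plan is to reduce the complete primality of $I(\alpha)$ to a statement about Goldie ranks via Premet's Theorem~\ref{pti}, and then to match up the combinatorics on both sides. Recall that $U(\mathfrak{g})/I(\alpha)$ is completely prime precisely when its Goldie rank equals $1$. By Joseph's classification \eqref{josephs} and the discussion of Kostant's problem in the introduction, the Goldie rank of $U(\mathfrak{g})/I(\alpha)$ is computed by Joseph's Goldie rank polynomial evaluated at the relevant weight, and this Goldie rank is an invariant of the row-equivalence class of $Q(\alpha)$. On the other side, by Theorem~\ref{pti} this Goldie rank equals $\dim L$ for a suitable finite dimensional irreducible module $L$ over the finite $W$-algebra $U(\mathfrak{g},e)$, where $e$ is the nilpotent of Jordan type the transpose of the shape $\lambda$ of $Q(\alpha)$, and $L$ is the module whose associated primitive ideal $I(L)$ equals $I(\alpha)$. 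So it suffices to prove: among the finite dimensional irreducibles $L$ with $I(L) = I(\alpha)$, the minimal dimension is $1$ if and only if $Q(\alpha)$ is row-equivalent to a column-connected tableau. Actually, since the Goldie rank is a single well-defined number, I must show the equivalence holds for the specific $L$ coming from Premet's construction — but a cleaner route is to use that $\grk U(\mathfrak{g})/I(\alpha) = 1$ iff there exists a one-dimensional $L$ with $I(L)=I(\alpha)$ (one direction is Premet; the other is that a one-dimensional $W$-algebra module has commutative, hence Goldie-rank-one, primitive quotient image).

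First I would recall from \cite[\S8.5]{BKrep} the parametrization of finite dimensional irreducible $U(\mathfrak{g},e)$-modules in Cartan type $A$ by row-equivalence classes of column-strict tableaux of shape $\lambda$ (equivalently by their highest weights, certain dominant weights for the "pyramid" Levi), together with the translation between $I(L)$ and the Robinson–Schensted tableau: the point is that $I(L) = I(\alpha)$ exactly when the column-strict tableau parametrizing $L$ has the same content-multiset-per-column data as $Q(\alpha)$ up to row equivalence, i.e. they lie in the same Joseph/RS class. Then I would identify when such an $L$ is one-dimensional: a finite dimensional irreducible $W$-algebra module in type $A$ is one-dimensional iff its highest weight is, in the pyramid combinatorics, the weight of a column-connected tableau — the "column-connected" condition is exactly the statement that the highest weight factors through the one-dimensional quotient of $U(\mathfrak{g},e)$, equivalently that the corresponding Yangian/shifted-Yangian highest weight is of the trivial-one-dimensional form. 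This is essentially the classification of one-dimensional representations of finite $W$-algebras in type $A$, which I would cite or sketch from \cite{BKshifted, BKrep, Pabelian}.

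The combinatorial heart, and the main obstacle, is then: a row-equivalence class of column-strict tableaux of shape $\lambda$ contains a column-connected tableau if and only if it contains a tableau of the form $Q(\alpha)$ where $Q(\alpha)$ is itself row-equivalent to a column-connected tableau — but more substantively, I must show that the "minimal dimension $= 1$" condition is intrinsic to the RS-class, i.e. that if one column-strict tableau in the class is row-equivalent to a column-connected tableau, then no obstruction arises, and conversely. Concretely: given $\alpha$ with $Q(\alpha)$ row-equivalent to a column-connected tableau $C$, then $C$ has shape $\lambda$, its columns have the form $(c, c+1, c+2, \dots)$, so $\gamma(C)$ is exactly the highest weight of a one-dimensional $U(\mathfrak{g},e)$-module $L$ with $e$ of Jordan type $\lambda^{\mathrm t}$; one checks $I(L) = I(C) = I(\alpha)$ using the compatibility of the $W$-algebra construction with the RS combinatorics and Duflo's theorem. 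Conversely, if $\grk U(\mathfrak{g})/I(\alpha) = 1$, then by Premet there is a one-dimensional $L$ with $I(L) = I(\alpha)$; by the classification of one-dimensional $W$-modules in type $A$ its highest weight is $\gamma(C)$ for some column-connected $C$ of shape $\lambda$, and then $Q(\gamma(C)) \sim Q(\alpha)$ by \eqref{josephs}, while a direct Schensted-insertion computation shows $Q(\gamma(C))$ is row-equivalent to $C$ (inserting the entries of a column-connected tableau read down columns reproduces that tableau up to sorting rows). I expect the delicate points to be: (i) pinning down the precise dictionary between the $W$-algebra highest weight and $\gamma(A)$ including the $\rho$-shift and the column-reading order in \eqref{newgamma}, and (ii) verifying that the $W$-algebra module attached to a column-connected tableau indeed has annihilator $I(\alpha)$ after Skryabin's equivalence — this is where I would lean on the results relating $I(L)$ to highest weight combinatorics established in \cite{BKrep} and on the shape/orbit compatibility, rather than on any new argument.
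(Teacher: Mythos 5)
Your proposal follows the paper's proof in all essentials: reduce complete primality to Goldie rank one, identify $I(\alpha)=I(L(A,e))$ for a column-strict $A\sim Q(\alpha)$ (Theorem~\ref{labels2}), use Premet's Theorem~\ref{pti} to get $\grk U(\mathfrak{g})/I(\alpha)=\dim L(A,e)$, and conclude by the classification of one-dimensional $U(\mathfrak{g},e)$-modules via column-connected tableaux (Theorem~\ref{class}). One small slip worth flagging: your stated reason that $\dim L=1$ forces Goldie rank one, namely that ``a one-dimensional $W$-algebra module has commutative\dots primitive quotient image,'' is not right --- $U(\mathfrak{g})/I(L)$ is in general noncommutative even when $L$ is one-dimensional (e.g.\ for principal $e$ these are minimal primitive quotients of positive GK dimension) --- but the conclusion is immediate from Premet's equality read in the other direction, so no genuine gap results; also the nilpotent $e$ has Jordan type equal to the shape $\lambda$ of $Q(\alpha)$, not its transpose.
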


Of course $I(\alpha)$ is completely prime if and only if
$\grk U(\mathfrak{g}) / I(\alpha) = 1$.
So in view of Theorem~\ref{pti} the completely prime primitive ideals
of $U(\mathfrak{g})$ are related to one dimensional representations of
the finite $W$-algebras $U(\mathfrak{g},e)$.
This is the basic idea for the proof of Theorem~\ref{mt}: we 
deduce it from a classification of
one dimensional representations of $U(\mathfrak{g},e)$
obtained via another result of Premet \cite[Theorem 3.3]{Pabelian}
describing the maximal commutative quotient $U(\mathfrak{g},e)^{\operatorname{ab}}$.

Our next theorem 
constructs
a large family of primitive ideals which are 
{\em induced} in the spirit of \cite[Th\'eor\`eme 8.6]{CB}; again our
proof of this uses finite $W$-algebras in an essential way.

\begin{Theorem}\label{sep}
Suppose we are given 
$\alpha \in \mathfrak{t}^*$
such that $Q(\alpha) \sim A$ for some column-separated tableau $A$. 
Let $\la' = (\la_1' \geq \la_2' \geq \cdots)$ 
be
the transpose of the shape of $A$.
Then we have that
\begin{align*}
I(\alpha) &= \ann_{U(\mathfrak{g})} (U(\mathfrak{g})
\otimes_{U(\mathfrak{p})} F),\\
\grk U(\mathfrak{g}) / I(\alpha) &=
\dim F,
\end{align*}
where $\mathfrak{p}$ is the standard parabolic subalgebra 
with diagonally embedded Levi factor
$\mathfrak{gl}_{\la_1'}(\C) \oplus \mathfrak{gl}_{\la_2'}(\C) \oplus
\cdots$,
and $F$ is the finite dimensional irreducible $\mathfrak{p}$-module
generated by a $\mathfrak{b}$-highest weight vector of weight
$\gamma(A)-\rho$; cf. (\ref{newgamma})--(\ref{rhodef}).
\end{Theorem}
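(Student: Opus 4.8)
The plan is to reduce both assertions to Theorem~\ref{pti} together with the structure theory of finite $W$-algebras in type $A$, using the column-separated hypothesis to pin down which $W$-algebra module is relevant. First I would recall the parabolic induction functor for finite $W$-algebras from \cite{BKrep}: if $e$ is the nilpotent of Jordan type $\lambda$ (the shape of $A$), then $U(\mathfrak{g},e)$ is built by a suitable quantization from the pyramid/diagram associated to the columns of $A$, and the highest weight theory of \cite[\S8]{BKrep} assigns to each column-strict tableau of shape $\lambda$ a finite dimensional irreducible $U(\mathfrak{g},e)$-module, with $I(L)$ recovered via Skryabin's equivalence. The key combinatorial input is that when $A$ is column-separated, the irreducible $U(\mathfrak{g},e)$-module $L=L(A)$ with highest weight read off from $A$ is (up to the shift by $\rho$) exactly the one whose dimension is computed in \cite[\S8.5]{BKrep}, and whose associated primitive ideal is $I(\alpha)$ by Joseph's description \eqref{josephs} of the fibers of $I$ via Robinson--Schensted; column-separatedness is precisely the condition ensuring that no two columns of $A$ are linked, which is what guarantees that the Schensted $Q$-symbol $Q(\alpha)$ really is row-equivalent to $A$ and that the $W$-algebra side does not collapse to a smaller orbit.

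Next, for the induction statement $I(\alpha) = \ann_{U(\mathfrak g)}(U(\mathfrak g)\otimes_{U(\mathfrak p)} F)$, I would argue as follows. The module $U(\mathfrak g)\otimes_{U(\mathfrak p)}F$ is a generalized Verma module whose unique irreducible quotient is $L(\gamma(A))=L(\alpha)$ by the way $\gamma(A)$ is defined in \eqref{newgamma} (reading down columns) and because $F$ is the finite dimensional irreducible $\mathfrak p$-module of highest weight $\gamma(A)-\rho$; in particular $L(\alpha)$ is a composition factor, so $\ann_{U(\mathfrak g)}(U(\mathfrak g)\otimes_{U(\mathfrak p)}F)\subseteq I(\alpha)$. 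For the reverse inclusion one uses that the annihilator of an induced module of this form is completely determined by its associated variety and its infinitesimal character, together with the fact—this is where the column-separated condition does real work—that the associated variety of $U(\mathfrak g)\otimes_{U(\mathfrak p)}F$ is the closure of the Richardson orbit for $\mathfrak p$, which has Jordan type $\lambda$, matching the associated variety of $U(\mathfrak g)/I(\alpha)$ computed from $Q(\alpha)\sim A$. Since in type $A$ a primitive ideal is determined by its central character and associated variety up to finitely many possibilities, and the induced annihilator is itself primitive (being the annihilator of a module with irreducible socle, or by \cite[Th\'eor\`eme 8.6]{CB}), one concludes equality.

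For the Goldie rank equality $\grk U(\mathfrak g)/I(\alpha)=\dim F$, I would invoke Theorem~\ref{pti}: it suffices to exhibit a finite dimensional irreducible $U(\mathfrak g,e)$-module $L$ with $I(L)=I(\alpha)$ and $\dim L=\dim F$. Take $L=L(A)$ as above. By the first paragraph $I(L)=I(\alpha)$, so Theorem~\ref{pti} gives $\grk U(\mathfrak g)/I(\alpha)=\dim L(A)$. It then remains to check the purely combinatorial identity $\dim L(A)=\dim F$, i.e. that the dimension formula of \cite[\S8.5]{BKrep} for the $W$-algebra module attached to a column-separated tableau $A$ coincides with $\dim F$, the dimension of the irreducible $\bigl(\mathfrak{gl}_{\lambda_1'}\oplus\mathfrak{gl}_{\lambda_2'}\oplus\cdots\bigr)$-module of the given highest weight. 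This is a Weyl-dimension-formula computation on each $\mathfrak{gl}_{\lambda_j'}$ block, and the point is that for column-separated $A$ the Kazhdan--Lusztig combinatorics in \cite[\S8.5]{BKrep} degenerates to a product of such classical dimensions—there are no linked columns to contribute correction terms.

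The main obstacle I expect is the reverse inclusion $I(\alpha)\subseteq\ann_{U(\mathfrak g)}(U(\mathfrak g)\otimes_{U(\mathfrak p)}F)$, i.e. showing the induced annihilator is not strictly larger than $I(\alpha)$; equivalently, that the generalized Verma module $U(\mathfrak g)\otimes_{U(\mathfrak p)}F$ is \emph{not} annihilated by any primitive ideal strictly containing $\ann_{U(\mathfrak g)}L(\alpha)$ would be automatic, but one must rule out that its annihilator is \emph{smaller} than $I(\alpha)$ only superficially—the real content is the associated-variety matching, which is where the column-separated hypothesis is indispensable and where I would lean on the precise form of \cite[Th\'eor\`eme 8.6]{CB} and on the classification of \cite[\S8]{BKrep} rather than attempt a direct computation.
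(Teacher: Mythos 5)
Your broad plan (reduce to Theorem~\ref{pti} and the highest-weight theory of $U(\mathfrak g,e)$, identify $I(\alpha)$ with $I(L(A,e))$, then match dimensions) is in the right spirit, but you have misidentified where the column-separated hypothesis actually enters, and as a result the reverse inclusion $I(\alpha)\subseteq\ann_{U(\mathfrak g)}\bigl(U(\mathfrak g)\otimes_{U(\mathfrak p)}F\bigr)$ is not proved.

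Your argument for the reverse inclusion rests on matching associated varieties and central characters and then invoking primitivity of the induced annihilator via \cite{CB}. This does not close the gap: for the parabolic $\mathfrak p$ with Levi of type $\lambda'$, the associated variety of $U(\mathfrak g)/\ann_{U(\mathfrak g)}(U(\mathfrak g)\otimes_{U(\mathfrak p)}F)$ is \emph{always} the closure of the Richardson orbit of Jordan type $\lambda$, and the central character always matches that of $L(\gamma(A))$, whether or not $A$ is column-separated. So these invariants carry no trace of the hypothesis, and they do not single out a primitive ideal (there are in general several primitive ideals with the same central character and associated variety, one for each column-strict tableau in the relevant family). You also suggest column-separatedness is what guarantees $Q(\alpha)\sim A$ — but that is a hypothesis of the theorem, not something to be proved. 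The actual work done by the hypothesis is entirely representation-theoretic on the $W$-algebra side: column-separatedness is exactly the condition under which the standard module $V(A,e)$ (the restriction to $U(\mathfrak g,e)$ of the finite dimensional irreducible $\mathfrak p$-module $V(A)$) is itself irreducible, i.e.\ $V(A,e)\cong L(A,e)$. This is Theorem~\ref{sep2} (\cite[Theorem 8.25]{BKrep}), and it is the key lemma your proposal never invokes. Once one has $V(A,e)\cong L(A,e)$, the identity $I(L(A,e)) = \ann_{U(\mathfrak g)}M(A)$ follows from the Skryabin/Whittaker machinery (Lemma~\ref{mainid}, after twisting by the anti-automorphism $t$ relating $\pi$ and the reversed pyramid $\pi^t$, cf.\ Lemma~\ref{booby1} and Theorem~\ref{msup}); combined with $I(\alpha)=I(L(A,e))$ from Theorem~\ref{labels2} this gives the first equality directly, with no appeal to \cite{CB}.

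For the dimension identity you propose to verify $\dim L(A,e)=\dim F$ by showing the Kazhdan--Lusztig sum in \cite[\S8.5]{BKrep} degenerates to a product of Weyl dimension formulas for the blocks of the Levi; this is plausible but you do not prove it, and it is anyway unnecessary once the irreducibility of $V(A,e)$ is in hand: one has $\dim L(A,e)=\dim V(A,e)=\dim V(A)=\dim F$ immediately, the last step because $V(A)$ and $F$ are irreducible $\mathfrak p$-modules whose highest weights differ by a character of $\mathfrak p$. In short: the missing ingredient is the irreducibility criterion for standard modules, and without it neither the first equality nor the clean dimension count goes through.
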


Using these two results we can already recover M\oe glin's
theorem. 

\vspace{2mm}
\noindent
{\bf Corollary}\:(M\oe glin){\bf .}
{\em
Every completely prime primitive ideal $I$ of $U(\mathfrak{g})$
is the annihilator of a module induced from a one dimensional
representation
of a parabolic subalgebra of $\mathfrak{g}$.}

\begin{proof}
Take a completely prime $I \in \Prim$ 
and represent it as $I(\alpha)$ for $\alpha \in \mathfrak{t}^*$.
By Theorem~\ref{mt}, there exists a column-connected tableau
$A \sim Q(\alpha)$.
Since column-connected tableaux are 
obviously column-separated, we then
apply
Theorem~\ref{sep} to deduce that $I = \ann_{U(\mathfrak{g})}
(U(\mathfrak{g}) \otimes_{U(\mathfrak{p})} F)$
for some parabolic $\mathfrak{p}$ and some
$\mathfrak{p}$-module $F$.
Finally observe from its explicit description in Theorem~\ref{sep} that $F$ is actually
one dimensional in the case that $A$ is column-connected.
\end{proof}

We record another piece of folklore peculiar to Cartan type $A$;
it justifies the decision to restrict attention 
for the remainder of the introduction just to weights from
the lattice $P := \bigoplus_{i=1}^N \Z \eps_i$
of {\em integral weights}.
We will give a natural proof of this via finite $W$-algebras, though it
also follows from more classical techniques.

\begin{Theorem}\label{red}
Suppose we are given $\alpha \in \mathfrak{t}^*$ and set $a_i := x_i(\alpha)$.
For fixed $z \in \C$, let
$\mathfrak{g}_z :=
\mathfrak{gl}_{n}(\C)$
where
$n := \#\{i=1,\dots,N\:|\:a_i \in z + \Z\}$,
then set
$\alpha_z := \sum_{j=1}^{n} (a_{i_j}-z)
\eps_j$ where $i_1 < \cdots < i_{n}$
are all the 
$i \in \{1,\dots,N\}$
such that $a_i \in z + \Z$. So $\alpha_z$ is an integral weight for $\mathfrak{g}_z$.
We have that
$$
\grk U(\mathfrak{g}) / I(\alpha)
= \prod_{z}
\grk U(\mathfrak{g}_z) / I(\alpha_z),
$$
where the product is over a set of representatives for the cosets of
$\C$ modulo $\Z$.
\end{Theorem}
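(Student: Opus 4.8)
The plan is to use Theorem~\ref{pti} to reinterpret both sides as dimensions of finite-dimensional irreducible representations of finite $W$-algebras, and then to check that the recipe of \cite[$\S$8.5]{BKrep} for these dimensions is multiplicative with respect to the decomposition of the entries of a tableau into cosets modulo $\Z$. To begin, I would fix a column-strict tableau $A$ that is row-equivalent to $Q(\alpha)$; this is possible, since $Q(\alpha)$ is always row-equivalent to a column-strict tableau. By the classification, in \cite{BKrep}, of the finite-dimensional irreducible modules over finite $W$-algebras in type $A$ together with their annihilators, the row-equivalence class of $A$ labels a finite-dimensional irreducible $U(\mathfrak{g},e)$-module $L$, for a suitable nilpotent $e \in \mathfrak{g}$ of Jordan type equal to the shape of $A$, and one has $I(L) = I(\gamma(A))$. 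Since $Q(\gamma(A)) \sim A \sim Q(\alpha)$, this equals $I(\alpha)$ by \eqref{josephs}, so Theorem~\ref{pti} gives $\grk U(\mathfrak{g})/I(\alpha) = \dim L$.

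Next I would use that a column-strict tableau is monochromatic by coset: two entries in a common column are comparable under $\geq$, hence differ by an integer, and so lie in the same coset of $\C$ modulo $\Z$. Grouping the columns of $A$ by coset therefore exhibits $A$, up to row-equivalence, as a disjoint union $\bigsqcup_z A_z$ of column-strict tableaux, one for each coset $z+\Z$ meeting the entries of $A$. Because Schensted insertion only ever bumps entries lying in the same coset, it respects this decomposition, and so $A_z$ is---up to row-equivalence and the harmless uniform shift of its entries by $z$, which twists by a one-dimensional module and changes no Goldie rank---the tableau $Q(\alpha_z)$. Hence each $A_z$ likewise labels a finite-dimensional irreducible module $L_z$ over the finite $W$-algebra attached to $\mathfrak{g}_z = \mathfrak{gl}_n(\C)$, and a second application of Theorem~\ref{pti} gives $\grk U(\mathfrak{g}_z)/I(\alpha_z) = \dim L_z$. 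The theorem therefore reduces to the multiplicativity $\dim L = \prod_z \dim L_z$.

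This last identity is where I expect the real work to lie. The algorithm of \cite[$\S$8.5]{BKrep} computes $\dim L$ as a sum, over the integral Weyl group attached to the weight $\gamma(A)$, of products of values at $1$ of Kazhdan--Lusztig polynomials; that integral Weyl group is a direct product of symmetric groups indexed by the cosets meeting $A$, and the Kazhdan--Lusztig polynomials of such a direct product factor as products over the factors. Since entries in distinct cosets are moreover incomparable under $\geq$, every ingredient of the computation splits as a product over the cosets, and summing then yields $\dim L = \prod_z \dim L_z$; carrying this bookkeeping out carefully is the main obstacle. (Alternatively, and closer to the classical literature, one can argue directly that $I(\alpha)$ is induced, with Goldie rank unchanged, from $\ann_{U(\mathfrak{l})} L_{\mathfrak{l}}(\alpha)$ for the ``integral'' Levi subalgebra $\mathfrak{l} \cong \bigoplus_z \mathfrak{gl}_n(\C)$, and then combine $U(\mathfrak{l}) = \bigotimes_z U(\mathfrak{gl}_n(\C))$ with the multiplicativity of Goldie rank under tensor products of prime Noetherian $\C$-algebras with centre $\C$; but the finite $W$-algebra route is the one naturally suggested by Theorem~\ref{pti}.)
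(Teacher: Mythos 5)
Your approach is essentially the paper's: fix a column-strict $A \sim Q(\alpha)$, use Theorem~\ref{labels2} and Theorem~\ref{pti} to identify $\grk U(\mathfrak{g})/I(\alpha)$ with $\dim L(A,e)$, split $A$ into tableaux $A_z$ by coset of $\C/\Z$ (correctly observing that column-strictness forces each column to be monochromatic by coset and that Schensted insertion respects this), and reduce the whole theorem to the multiplicativity $\dim L(A,e) = \prod_z \dim L(A_z,e_z)$. The one place you flag as unfinished --- the multiplicativity of dimensions across cosets --- is exactly where the paper simply invokes Theorem~\ref{dimred}, a verbatim restatement of \cite[Theorem 7.14]{BKrep}, applied a few times. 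So that step is a direct citation, not new work, and you need not redo the Kazhdan--Lusztig bookkeeping you sketch (though your sketch of why it should factor is the right heuristic and is in the spirit of how \cite[Theorem 7.14]{BKrep} is proved). In short: correct approach, same as the paper's; the only gap is not recognizing that the multiplicativity is already available as Theorem~\ref{dimred}.
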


In order to say more about Goldie ranks, we need some language related to the geometry of $P$.
A weight $\alpha \in P$ is {\em anti-dominant}
(resp.\ {\em regular anti-dominant}) if it satisfies $x_i(\alpha) \leq
x_{i+1}(\alpha)$
(resp.\ $x_i(\alpha) < x_{i+1}(\alpha)$) 
for each $i=1,\dots,N-1$.
Given any $\alpha \in P$, we let $\delta$ be its
{\em anti-dominant conjugate}, the unique anti-dominant weight in its $W$-orbit,
and then define $d(\alpha)\in W$ to be the unique element of minimal length
such that $\alpha = d(\alpha) \delta$.
Note that stabilizer $W_\delta$ of $\delta$ in $W$
is a parabolic subgroup, and the element $d(\alpha)$ belongs to the set
$D_\delta$ of minimal length $W / W_\delta$-coset
representatives.
For $w \in W$ let
\begin{equation}\label{uc}
\widehat{C}_w := \left\{\alpha \in P\:|\:d(\alpha) = w\right\},
\end{equation}
which is the set of integral weights lying in the 
{\em upper closure} of the chamber containing $w(-\rho)$,
i.e. we have $\alpha 
\in \widehat{C}_w$ if and only if the following 
hold for every $1 \leq i < j \leq N$:
$$
\begin{array}{lcl}
w^{-1}(i) < w^{-1}(j)&\Rightarrow&x_i(\alpha) \leq x_j(\alpha),\\
w^{-1}(i) > w^{-1}(j)&\Rightarrow&x_i(\alpha) > x_j(\alpha).
\end{array}
$$
The upper closures $\widehat{C}_w$ for all $w \in W$ partition the
set $P$ into disjoint subsets.

Recall also the {\em left cells} of $W$, which in the
case of the symmetric group
can be defined in purely combinatorial terms as the equivalence classes
of the relation $\sim_L$ on $W$ defined by
\begin{equation*}
x \sim_L y \Leftrightarrow Q(x) = Q(y).
\end{equation*}
The map $Q$ here comes from the classical Robinson-Schensted bijection 
\begin{equation*}
w \mapsto (P(w), Q(w))
\end{equation*}
from $W$ to the set of all 
pairs of standard tableaux of the same shape
as in e.g. \cite[ch.1]{fulton}; so $P(w)$ is the {\em insertion tableau}
and $Q(w)$ is the {\em recording tableau}.
Comparing with 
our earlier notation, we have that
\begin{equation}\label{rel}
Q(w) = P(w^{-1}) = Q(w(-\rho)),
\end{equation}
hence the connection between left cells in $W$ and
the Duflo-Joseph classification of primitive ideals
from (\ref{josephs}).

We say that $w \in W$ is {\em minimal in its left cell}
if $P(w)$ has the entries $1,\dots,N$ appearing in 
order up columns starting from the leftmost column. 
It is clear from the Robinson-Schensted correspondence that every 
left cell has a unique such minimal representative.
Given any $\alpha \in \widehat{C}_w$, 
the Robinson-Schensted algorithm assembles the tableaux $Q(\alpha)$
and $Q(w(-\rho))=P(w^{-1})$ in exactly the same order, i.e. they
have the same recording tableau $Q(w^{-1}) = P(w)$.  
If $w$ is minimal in its left cell, so this recording tableau
has entries $1,\dots,N$ in 
order up columns,
we therefore have that
\begin{equation}\label{minimal}
\alpha = \gamma(Q(\alpha)))
\end{equation}
for any 
$\alpha\in\widehat{C}_w$ and $w$ that is minimal in its left cell.
This is the reason that the minimal left cell representatives are
particularly
convenient to work with.

At last we can resume the main discussion of Goldie ranks.
In \cite[$\S$5.12]{JgoldieI},
Joseph made the striking discovery that
for each $w \in W$ there is a unique
polynomial $p_w \in \C[\mathfrak{t}^*]$ with the property that
\begin{equation}\label{goldiedef}
\grk U(\mathfrak{g}) / I(\alpha)
= p_w(\delta)
\end{equation}
for each $\alpha \in \widehat{C}_w$, where $\delta$ denotes the
anti-dominant conjugate of $\alpha$.
The $p_w$'s
are Joseph's {\em Goldie rank polynomials}, which have many remarkable
properties. We recall in particular that $p_w$ only depends on the left
cell of $w$. To see this, take any regular anti-dominant $\delta \in P$.
Assuming $w \sim_L w'$ we have that 
$Q(w\delta) = Q(w'\delta)$ 
so $I(w \delta) = I(w' \delta)$ by (\ref{josephs}).
Also $w \delta$ and $w' \delta$ belong to (the interior of) $\widehat{C}_w$
and $\widehat{C}_{w'}$, respectively, by regularity.
Hence (\ref{goldiedef}) gives that
$p_w(\delta) = p_{w'}(\delta)$.
Since the regular anti-dominant weights are Zariski dense 
this implies that $p_w = p_{w'}$ whenever $w \sim_L w'$.

The following theorem, which is 
ultimately deduced from Theorem~\ref{sep}, gives an explicit formula for Goldie
rank polynomials in several important cases, e.g. 
it includes the extreme cases $w = 1$ (when $p_w=1$) and
$w=w_0$ (when it is essentially Weyl's dimension formula), as well as
all situations when the tableau $Q(w)$ has just two rows.

\begin{Theorem}\label{myg}
Suppose we are given 
$w \in W$
such that $Q(w) \sim A$ for some column-separated tableau $A$. 
Then we have that
$$
p_w = 
\prod_{(i,j)} \frac{x_i-x_j}{d(i,j)}
$$
where the product is over all pairs $(i,j)$
of entries from the tableau $A$ such that $i$ is strictly above and in the same column as
$j$, and $d(i,j) > 0$ is the number of rows that $i$ is above $j$.
\end{Theorem}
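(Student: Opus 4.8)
The plan is to regard both sides of the asserted identity as polynomials on $\mathfrak{t}^*$ and to check that they agree on a Zariski-dense set of weights, using Theorem~\ref{sep} to compute the left-hand side there. Fix once and for all a column-separated tableau $A$ with $A\sim Q(w)$; since $Q(w)=Q(w(-\rho))$ is a standard tableau by (\ref{rel}), the entries of $A$ are exactly $1,\dots,N$, arranged so that every column is strictly increasing from bottom to top. Now let $\delta\in P$ be an arbitrary \emph{regular} anti-dominant integral weight, write $\delta_k:=x_k(\delta)$ so that $\delta_1<\dots<\delta_N$ are integers, and set $\alpha:=w\delta$. Since $\delta$ is regular we have $W_\delta=1$, hence $d(\alpha)=w$ and $\alpha\in\widehat C_w$, so that $\grk U(\mathfrak{g})/I(\alpha)=p_w(\delta)$ by (\ref{goldiedef}).

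First I would identify $Q(\alpha)$. The coordinate sequence of $\alpha=w\delta$ is $(\delta_{w^{-1}(1)},\dots,\delta_{w^{-1}(N)})$, which is obtained from the coordinate sequence $(w^{-1}(1),\dots,w^{-1}(N))$ of $w(-\rho)$ by applying the strictly increasing relabelling $k\mapsto\delta_k$. As Schensted insertion depends only on the relative order of the inserted numbers, the insertion tableau gets relabelled in the same way: $Q(\alpha)$ is obtained from $Q(w(-\rho))=Q(w)$ by replacing each entry $k$ with $\delta_k$. Applying that relabelling to $A$ produces a tableau $A_\delta$ with $A_\delta\sim Q(\alpha)$, and $A_\delta$ is still column-separated, because an order-preserving relabelling of $\{1,\dots,N\}$ preserves column-strictness (here one uses that $\delta$ is integral, so $\delta_j-\delta_i\in\Z_{>0}$ whenever $i<j$) and preserves the ``linked'' relation between columns, which is phrased purely in terms of order. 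Theorem~\ref{sep} now applies to $\alpha$ and $A_\delta$ and gives
\[
p_w(\delta)\;=\;\grk U(\mathfrak{g})/I(\alpha)\;=\;\dim F_\delta,
\]
where $\mathfrak{p}$ is the standard parabolic with Levi $\mathfrak{gl}_{\lambda_1'}(\C)\oplus\mathfrak{gl}_{\lambda_2'}(\C)\oplus\cdots$ (with $\lambda_c'$ the number of rows in the $c$th column of $A$), and $F_\delta$ is the finite dimensional irreducible $\mathfrak{p}$-module of $\mathfrak{b}$-highest weight $\gamma(A_\delta)-\rho$.

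Next I would compute $\dim F_\delta$ column by column. Since $\gamma$ reads a tableau down its columns, the restriction of $\gamma(A_\delta)-\rho$ to the $c$th block of $\lambda_c'$ coordinates depends only on the $c$th column of $A_\delta$: if that column read from top to bottom is $(e_1>e_2>\cdots>e_{\lambda_c'})$ and $p_c:=1+\lambda_1'+\cdots+\lambda_{c-1}'$, then by (\ref{newgamma})--(\ref{rhodef}) the $r$th entry of the block is $e_r+p_c+r-1$. Thus $F_\delta$ is an outer tensor product $F_1\boxtimes F_2\boxtimes\cdots$ over the Levi factors, and in Weyl's dimension formula for the $c$th factor $\mathfrak{gl}_{\lambda_c'}(\C)$ the term $r-1$ cancels against the $\rho$-shift $s-r$ in the numerator, leaving
\[
\dim F_c\;=\;\prod_{1\le r<s\le\lambda_c'}\frac{e_r-e_s}{s-r}.
\]
Here $e_r-e_s$ is the difference of the two entries of $A_\delta$ in rows $\lambda_c'+1-r$ and $\lambda_c'+1-s$ of the $c$th column, and $s-r$ is precisely the number of rows separating them. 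Undoing the relabelling $k\mapsto\delta_k$, such a pair of entries of $A_\delta$ comes from a pair $(i,j)$ of entries of $A$ in the same column with $i$ a total of $d(i,j):=s-r$ rows above $j$, and the corresponding factor is $(\delta_i-\delta_j)/d(i,j)=(x_i-x_j)(\delta)/d(i,j)$. Multiplying over all columns yields $p_w(\delta)=\prod_{(i,j)}(x_i-x_j)(\delta)/d(i,j)$, the product being exactly the one in the statement.

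Finally, since this identity holds for every regular anti-dominant integral weight $\delta$, and such weights are Zariski dense in $\mathfrak{t}^*$ (as in the argument preceding (\ref{goldiedef})), the two polynomials coincide; in particular the right-hand side does not depend on the choice of column-separated $A\sim Q(w)$. I expect the one genuinely delicate point to be the bookkeeping in the Weyl computation: one has to align the top-to-bottom reading order built into $\gamma$, the normalisation of $\rho$ in (\ref{rhodef}), the transposition of the shape that controls the Levi of $\mathfrak{p}$, and the row-indexing, so that the $\rho$-shifts cancel on the nose and the denominators emerge as the row-distances $d(i,j)$ rather than off by a constant. Everything else — in particular that the relabelled tableau $A_\delta$ is again column-separated — is immediate once $\delta$ is taken both regular (for strict column inequalities and for $\alpha\in\widehat C_w$) and integral (so that adjacent column entries of $A_\delta$ still differ by a positive integer).
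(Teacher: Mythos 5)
Your proof is correct and follows essentially the same route as the paper's: evaluate at regular anti-dominant integral $\delta$, apply Theorem~\ref{sep} to $\alpha=w\delta$ with the relabelled column-separated tableau, compute $\dim F$ by Weyl's dimension formula factor-by-factor over the Levi, and conclude by Zariski density. The only divergence is cosmetic: the paper first replaces $w$ by the minimal representative of its left cell so that (\ref{minimal}) gives $\alpha=\gamma(Q(\alpha))$ directly, while you instead track the order-isomorphism $k\mapsto\delta_k$ under Schensted insertion, thereby avoiding the appeal to left-cell invariance of $p_w$ — both identifications lead to the same cancellation of the $\rho$-shift in the Weyl numerator.
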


For general $w$, the polynomials $p_w$ are more complicated
but can be written explicitly in terms of Kazhdan-Lusztig
polynomials. To explain this, and for later use,
we must make one more notational digression.
Recall that the irreducible module $L(\alpha)$ is the unique
irreducible quotient of the {\em Verma module}
$M(\alpha)
:= U(\mathfrak{g}) \otimes_{U(\mathfrak{b})} \C_{\alpha-\rho}$,
where $\C_{\alpha-\rho}$ is the one dimensional $\mathfrak{b}$-module 
of weight $\alpha-\rho$. 
We have the usual {\em decomposition numbers}
$[M(\alpha):L(\beta)] \in \Z_{\geq 0}$ and the {\em inverse decomposition numbers}
$(L(\alpha):M(\beta)) \in \Z$ defined from
\begin{equation}\label{first}
\ch L(\alpha) = \sum_{\beta} 
(L(\alpha):M(\beta)) \ch M(\beta).
\end{equation}
For $w \in W$, we denote $L(w(-\rho))$ and $M(w(-\rho))$ simply by
$L(w)$ and $M(w)$, respectively; in particular, 
$L(w_0)$ is the trivial module.
By the translation principle (see \cite[4.12]{Je}), we have that
\begin{align}\label{kldef}
[M(\alpha):L(\beta)] &= [M(x):L(y)],\\
(L(\alpha):M(\beta)) &= 
\sum_{z \in W_\delta}
(L(x):M(yz)),\label{klform}
\end{align}
for any $\alpha,\beta \in P$ with
the same anti-dominant
conjugate $\delta$, where $x := d(\alpha)$ and $y := d(\beta)$.
Moreover, by the Kazhdan-Lusztig conjecture established
in \cite{BB, BK}, it is known for $x,y\in W$ that
\begin{align}\label{bigkl}
[M(x):L(y)] &= P_{x w_0,y w_0}(1),\\
(L(x):M(y)) &= (-1)^{\ell(x)+\ell(y)} P_{y,x}(1)\label{bigkl2}
\end{align}
where $P_{x,y}(t)$ denotes the Kazhdan-Lusztig polynomial
attached to $x,y \in W$ from \cite{KL}.

The following theorem gives an explicit formula
for the Goldie rank polynomials $p_w$.
It is a straightforward
consequence of Joseph's original approach for computing 
Goldie ranks in Cartan type $A$ from \cite{Jkos},
which we already mentioned in the discussion after Theorem~\ref{pti}.
As was explained to me by Joseph, it can also be deduced from Joseph's general formula for Goldie
rank polynomials (bearing in mind that 
all the scale factors are known in Cartan type $A$).
We give yet another proof in the last section of the article 
via finite $W$-algebras, exploiting Theorem~\ref{pti}.
Recall for the statement that $p_w$ depends only on the left cell of $w$,
so it is sufficient to compute $p_w$ just for the 
minimal left cell representatives.

\begin{Theorem}[Joseph]\label{foldie}
Suppose $w \in W$ is minimal in its left cell.
Let $\la$ be the shape of the tableau $Q(w)$ with transpose
$\la' = (\la_1' \geq \la_2' \geq \cdots)$.
Let $W^\la$ denote
the
parabolic 
subgroup $S_{\la'_1} \times S_{\la_2'}\times\cdots$ of $W=S_N$
and
$D^\la$ be 
the set of maximal length $W^\la \backslash W$-coset representatives.
Then
\begin{equation}\label{bform}
p_w=
\sum_{z \in D^\lambda}
(L(w):M(z)) z^{-1}(h_\la)
\end{equation}
where
$h_\la := 
\!\!\displaystyle\prod_{(i\:j) \in W^\la} \!\frac{x_i - x_j}{j-i}$
(product over all transpositions $(i\:j) \in W^\la$).
\end{Theorem}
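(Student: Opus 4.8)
The plan is to follow Joseph's original additivity argument from \cite{Jkos}, translating it into the notation set up above. The starting point is the observation that since $w$ is minimal in its left cell, by \eqref{minimal} we have $\alpha = \gamma(Q(\alpha))$ for all $\alpha \in \widehat C_w$; combined with \eqref{rel}, this says that the tableau $Q(w)$ has shape $\lambda$ and that the weight $w(-\rho)$ is recovered by reading $Q(w)$ down columns. I would first treat the ``generic'' situation, taking $\alpha \in \widehat C_w$ with anti-dominant conjugate $\delta$ \emph{regular}, so that $\widehat C_w$ contains an open neighbourhood of $\alpha$ and by \eqref{goldiedef} the Goldie rank $\grk U(\mathfrak g)/I(\alpha) = p_w(\delta)$. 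The key tool is the additivity principle for Goldie ranks: expressing $\ch L(w)$ in terms of Verma characters via \eqref{first}, one knows from Joseph's work that the Goldie rank of $U(\mathfrak g)/\ann L(w)$ is computed by a weighted sum of the Goldie ranks attached to the Verma modules $M(z)$ appearing, with multiplicities $(L(w):M(z))$, and that after translating the anti-dominant conjugate $\delta$ toward the walls one picks up precisely the factor $z^{-1}(h_\lambda)$ coming from the Weyl-dimension-type normalisation of the cell. The parametrisation over $z \in D^\lambda$, the maximal-length $W^\lambda\backslash W$-coset representatives, is exactly what makes \eqref{klform} collapse to a single term per coset, matching the formula \eqref{bform}.

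Concretely, the steps I would carry out are: (1) identify $W^\lambda = S_{\lambda_1'}\times S_{\lambda_2'}\times\cdots$ as the stabiliser, under the dot action, of the flag of ``column blocks'' of $Q(w)$, and observe that $h_\lambda$ is the product over all positive roots of the Levi $\mathfrak{gl}_{\lambda_1'}\oplus\mathfrak{gl}_{\lambda_2'}\oplus\cdots$ of $(x_i-x_j)/(j-i)$, i.e. Weyl's dimension polynomial for that Levi normalised at $-\rho$; (2) verify that the right-hand side of \eqref{bform} is a well-defined polynomial in $\C[\mathfrak t^*]$, using \eqref{bigkl2} to rewrite $(L(w):M(z)) = (-1)^{\ell(w)+\ell(z)}P_{z,w}(1)$; (3) show that for regular anti-dominant $\delta$ the value of the right-hand side at $\delta$ equals $\grk U(\mathfrak g)/I(w\delta)$, by invoking Joseph's additivity principle together with the solution of the weak Kostant problem in type $A$ (both cited in the discussion after Theorem~\ref{pti}), which guarantees $\grk U(\mathfrak g)/\ann M = \grk \mathscr L(M,M)$ and hence lets one pass from $L(w)$ to its Verma filtration; (4) conclude $p_w$ equals the right-hand side of \eqref{bform} on the Zariski-dense set of regular anti-dominant integral weights, hence as polynomials.

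The main obstacle is step (3): making the additivity argument precise, in particular controlling how the Goldie rank degenerates as $\delta$ is specialised from a regular to a singular anti-dominant weight, and checking that the combinatorial book-keeping of cosets ($z$ ranging over $D^\lambda$, i.e. maximal-length representatives) matches the sum over $z \in W_\delta$ in \eqref{klform} so that the $p_w(\delta)$ picked out by \eqref{goldiedef} is exactly $\sum_{z\in D^\lambda}(L(w):M(z))\,z^{-1}(h_\lambda)(\delta)$. A subtle point is that $h_\lambda$ as written is attached to $W^\lambda$ rather than to $W_\delta$; one must check that because $w$ is minimal in its left cell, $W^\lambda$ is precisely the ``$\tau$-invariant'' parabolic governing the cell, so that the two normalisations agree. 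I would also remark, as the statement promises, that an alternative route is available: one can instead quote Joseph's general Goldie rank polynomial formula from \cite{Jgoldie} and insert the known type-$A$ scale factors (via \cite[Lemma 5.1]{Jcyclic}), which bypasses the additivity argument entirely, and that the finite $W$-algebra proof deferred to the last section gives a third, independent derivation via Theorem~\ref{pti}.
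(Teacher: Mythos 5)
Your high-level strategy---evaluate both sides on regular anti-dominant $\delta$, invoke Joseph's additivity principle together with the solution of the weak Kostant problem in type $A$ to compute $\grk U(\mathfrak{g})/I(w\delta)$, and then conclude by Zariski density---is precisely the alternative, ``$W$-algebra-free'' route that the paper itself flags parenthetically in its proof: the relevant Goldie rank identity can be deduced directly from Joseph's (\ref{josep}) combined with Theorem~\ref{maing}, bypassing Theorems~\ref{pti} and~\ref{bigt}. So the route you sketch is a legitimate one. The difficulty is that you leave its essential step unexecuted, and the mechanism you propose for it is not correct.

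Concretely, the additivity principle and weak Kostant give the identity
$\grk U(\mathfrak{g})/I(\gamma(A)) = \sum_{B}(L(A):M(B))\,h_\pi(\gamma(B))$,
where $A = Q(w\delta)$ and the sum is over column-strict $\pi$-tableaux $B$ of the same content (this is (\ref{josep}), which you gesture at). The remaining step---which you rightly call the ``main obstacle''---is to rewrite this \emph{tableau-indexed} sum as the \emph{coset-indexed} sum $\sum_{z\in D^\lambda}(L(w):M(z))\,z^{-1}(h_\lambda)(\delta)$. In the paper this is done by Lemma~\ref{klthm} and Theorem~\ref{maing}: transfer the parabolic inverse decomposition numbers $(L(A):M(B))$ to ordinary ones using the Weyl character formula, identify the column-strict $B$ of the given content with the elements $z = d(\gamma(B))\in D^\pi\cap D_\delta$ on which $h_\pi(z\delta)\neq 0$ (so that $h_\pi(\gamma(B)) = z^{-1}(h_\pi)(\delta)$), and extend the sum to all of $D^\pi$ using the vanishing of $h_\pi$ on the remaining cosets. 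Your proposed mechanism---that ``translating $\delta$ toward the walls'' produces the factor $z^{-1}(h_\lambda)$---does not describe this: no translation to a singular weight is performed, $\delta$ remains regular throughout (so $W_\delta$ is trivial and (\ref{klform}) is a tautology), and the coset structure governed by $D^\lambda$ comes from the column stabilizer $W^\pi$ of the tableau, not from the stabilizer $W_\delta$. Your stated worry about ``controlling how the Goldie rank degenerates'' as $\delta$ becomes singular is therefore a red herring. A second, smaller gap: inverting the additivity principle uses that $\rk(B)$ vanishes unless $L(B)$ has maximal Gelfand--Kirillov dimension, and to recover $\grk U(\mathfrak{g})/I(\gamma(A))$ from $\rk(A)$ one must know that the semi-standard $A$ does give a module of maximal GK dimension; this is the implication (1)$\Rightarrow$(4) of Theorem~\ref{bigt} and is not addressed in your sketch.
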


Joseph has directed a great deal of attention to the problem of
determining the unknown constants in the Goldie rank polynomials
in Cartan types different from $A$. This led Joseph to conjecture in \cite[Conjecture 
8.4(i)]{Jsur} that Goldie
rank polynomials always take the value $1$ on some integral weight.
Our final result verifies this conjecture in Cartan type $A$.
The proof is a surprisingly simple computation from (\ref{bform}).

\begin{Theorem}\label{one}
Every Goldie rank polynomial takes the value one on some element of
$P$.
More precisely, if $w \in W$ is minimal in its left cell
and $C$ is the unique
tableau of the same shape as $Q(w)$
that has all $1$'s on its bottom row, all $2$'s on the next row up, and so on,
then
$p_w(\alpha) =1$ where
$\alpha := w^{-1} \gamma(C)$.
\end{Theorem}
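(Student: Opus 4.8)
The plan is to evaluate the right-hand side of formula~(\ref{bform}) at the weight $\alpha = w^{-1}\gamma(C)$ and check that all but one of the terms vanish, with the surviving term contributing exactly $1$. First I would observe that since $C$ has shape $\la$ (the shape of $Q(w)$), the weight $\gamma(C)$ is obtained by reading $C$ down columns; because every row of $C$ is constant and the columns of a shape-$\la$ tableau have lengths $\la_1',\la_2',\dots$, the sequence read off is $(1,2,\dots,\la_1',\ 1,2,\dots,\la_2',\ \dots)$. Equivalently, $\gamma(C)$ is the unique anti-dominant weight whose stabilizer in $W=S_N$ is exactly $W^\la = S_{\la_1'}\times S_{\la_2'}\times\cdots$ — the blocks of equal coordinates of $\gamma(C)$ are precisely the intervals permuted by $W^\la$. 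So $\gamma(C)$ plays the role of a (non-regular) anti-dominant weight $\delta$ with $W_\delta = W^\la$, and $\alpha = w^{-1}\gamma(C)$ lies in the relevant chamber-closure.

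Next I would unwind what~(\ref{bform}) gives when evaluated at this $\alpha$. Each summand is $(L(w):M(z))\cdot\bigl(z^{-1}(h_\la)\bigr)(\alpha)$, and $z^{-1}(h_\la)$ evaluated at $\alpha = w^{-1}\gamma(C)$ equals $h_\la$ evaluated at $z w^{-1}\gamma(C)$. Now $h_\la = \prod_{(i\,j)\in W^\la}\frac{x_i-x_j}{j-i}$ vanishes at any weight having two equal coordinates within a common block of $W^\la$, i.e. at any weight fixed by some transposition in $W^\la$ — equivalently, it vanishes unless $W^\la$ acts freely, which for the weight $z w^{-1}\gamma(C)$ means $zw^{-1}$ must send the blocks of $\gamma(C)$ to distinct-entry configurations. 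Since $z$ ranges over $D^\la$, the maximal-length $W^\la\backslash W$-coset representatives, the key point is that $h_\la(z w^{-1}\gamma(C))$ is nonzero for exactly one choice of $z$, namely the one making $zw^{-1}\gamma(C)$ anti-dominant (with trivial $W^\la$-stabilizer genuinely forced away); and for that $z$ the product telescopes: the coordinates get rearranged into $\gamma(C)$ itself (or a fixed shuffle of it), so each factor $\frac{x_i-x_j}{j-i}$ becomes $\frac{(\text{difference of consecutive integers in a column})}{j-i}$, and the careful choice of the normalization $j-i$ in the denominator is exactly arranged so that this product is $1$. Simultaneously the corresponding inverse decomposition number $(L(w):M(z))$ must equal $\pm1$ with the sign working out to $+1$; this follows because $w$ is minimal in its left cell and $z$ is the specific coset representative, so by~(\ref{klform})--(\ref{bigkl2}) the coefficient reduces to a sum of signed Kazhdan--Lusztig values $P_{\cdot,\cdot}(1)$ that collapses to $1$ (the ``leading'' term).

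I would carry this out in three steps: (1) identify $\gamma(C)$ as the anti-dominant weight with stabilizer $W^\la$ and pin down $\alpha=w^{-1}\gamma(C)$ in the appropriate closure $\widehat C_w$; (2) show that in~(\ref{bform}) only the unique $z\in D^\la$ with $zw^{-1}\gamma(C)$ anti-dominant survives, all other summands dying because $h_\la$ has a zero factor there; (3) evaluate the surviving summand, checking separately that $h_\la$ contributes $1$ (pure combinatorics with the $\frac{x_i-x_j}{j-i}$ normalization) and that the inverse decomposition coefficient contributes $1$ (via~(\ref{klform}), (\ref{bigkl2}), and minimality of $w$ in its left cell). The main obstacle I anticipate is step~(2) together with the sign/value bookkeeping in step~(3): one must be sure that no \emph{other} $z\in D^\la$ gives a weight on which $h_\la$ is nonzero, which amounts to the fact that among all cosets $W^\la z$ only the anti-dominant representative yields a weight whose $W^\la$-orbit meets the ``regular'' locus in the right way, and then that the single nonzero Kazhdan--Lusztig contribution is exactly $P_{y,x}(1)=1$ for the relevant $x,y$ — this last point is where one genuinely uses that $w$ is the minimal left-cell representative rather than an arbitrary element, since it forces $P(w)$ to have its entries in column-reading order and hence makes the relevant Bruhat comparison trivial.
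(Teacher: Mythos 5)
Your high-level plan — evaluate the formula~(\ref{bform}) at $\alpha=w^{-1}\gamma(C)$, argue all but one summand dies via a vanishing factor of $h_\lambda$, and check the surviving term is $1$ — does match the skeleton of the paper's argument, but the key identifications in step~(1) are wrong and this poisons the rest.

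First, $\gamma(C)$ is \emph{not} anti-dominant, and its stabilizer is not $W^\la$. Reading $C$ down columns (top to bottom, as the paper's $\gamma$ prescribes) you get, from a column of height $\la_k'$, the string $\la_k',\la_k'-1,\dots,2,1$, not $1,2,\dots,\la_k'$. So $\gamma(C)=(\la_1',\dots,1,\ \la_2',\dots,1,\ \dots)$, which decreases within each block. Moreover, its stabilizer is $S_{\la_1}\times S_{\la_2}\times\cdots$ — the rows of the Young diagram, indexed by $\la$, since the value $k$ appears once in each column of height $\ge k$, i.e.\ $\la_k$ times — not the column group $W^\la=S_{\la_1'}\times S_{\la_2'}\times\cdots$. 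This matters: your subsequent claim that ``the surviving $z$ is the one making $zw^{-1}\gamma(C)$ anti-dominant'' is false. The surviving term is $z=w$, for which $zw^{-1}\gamma(C)=\gamma(C)$, and this is manifestly not anti-dominant. No anti-dominance-based selection principle is at work.

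The actual vanishing mechanism is genuinely combinatorial and is where the real content lies: since $(L(w):M(z))=0$ unless $z\le w$ in Bruhat order, one only needs to kill the terms with $z\in D^\lambda$, $z<w$. Writing $A:=wS=Q(w)$ (standard, by minimality of $w$ in its left cell) and $B:=zS$ (column-strict), one shows that whenever $A>B$ in the induced Bruhat order on column-strict tableaux, there exist $i<j$ lying in the \emph{same row} of $A$ and the \emph{same column} of $B$. Then $(z(i)\ z(j))\in W^\lambda$ contributes a linear factor $x_{z(i)}-x_{z(j)}$ to $h_\lambda$, and evaluating it at $zw^{-1}\gamma(C)$ pulls back to $\gamma(C)_{w(i)}-\gamma(C)_{w(j)}=0$ since $w(i),w(j)$ share a row of $C$ (where $C$ is row-constant). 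Your proposal doesn't articulate any such argument and the ``anti-dominant coset representative'' heuristic you offer in its place is incorrect. Finally, for the surviving $z=w$ term, $(L(w):M(w))=1$ is just the diagonal entry of a unitriangular matrix — no Kazhdan--Lusztig bookkeeping, leading terms, or sign analysis is needed, so that part of your plan, while not wrong, is overengineered. Your observation that each factor $\tfrac{x_i-x_j}{j-i}$ evaluates to $1$ at $\gamma(C)$ when $i,j$ are in the same column is correct and is all that is needed for the surviving $h_\lambda$-value.
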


The remainder of the article is organized as follows.
In $\S$2, we recall the highest weight classification of finite dimensional
irreducible representations of the 
finite $W$-algebra $U(\mathfrak{g},e)$
from \cite[Theorem 7.9]{BKrep}.
Then we compare this with
\cite[Theorem 3.3]{Pabelian}
to determine the highest weights of all the
one dimensional $U(\mathfrak{g},e)$-modules explicitly.
In particular we see from this that every one dimensional
representation of a finite $W$-algebra in
Cartan type $A$ can be 
obtained as the restriction of a one dimensional representation of a
parabolic subalgebra of $\mathfrak{g}$,
a statement which is closely related to M\oe glin's theorem.

Then in $\S$\ref{swhitt} we gather together various 
existing results about 
Whittaker functors
and primitive ideals in Cartan type $A$. 
In fact we need to exploit both sorts of Whittaker functor (invariants
and coinvariants) to deduce our main results.
We point out in particular Remark~\ref{brundans}, in which we 
formulate a
conjecture which would imply a
classification of primitive ideals in $U(\mathfrak{g},e)$ exactly in
the spirit of the Joseph-Duflo classification of $\Prim$.

In $\S$\ref{sm} we use the criterion for
irreducibility of standard modules from \cite[Theorem 8.25]{BKrep}
to establish the first equality in Theorem~\ref{sep}.

In $\S$\ref{sco} we review
the Whittaker coinvariants construction of finite dimensional 
irreducible 
$U(\mathfrak{g},e)$-modules from
\cite[Theorem 8.21]{BKrep}.

In $\S$\ref{sgoldie} we explain the method from \cite[$\S$8.5]{BKrep}
for computing dimensions of finite dimensional
irreducible $U(\mathfrak{g},e)$-modules, and
extract the polynomial on the right hand side of the 
formula (\ref{bform}) from this.

Finally we explain the alternative proof of Theorem~\ref{pti}
and derive all the other new results formulated in this introduction
in $\S$\ref{sproofs}.

\vspace{2mm}
\noindent
{\em Acknowledgements.}
My interest in reproving M\oe glin's theorem using finite $W$-algebras
was sparked in the first place by a conversation with Alexander Premet and Anthony Joseph
at the Oberwolfach meeting on ``Enveloping Algebras'' in March 2005.
I would like to thank Alexander Premet for some inspiring discussions
and encouragement since then, most recently at the
``Representation Theory of Algebraic Groups and Quantum Groups'' conference
in Nagoya in August, 2010 where I learnt about the new results in
\cite{Pnew}.
I also thank Anthony Joseph for his helpful comments on the first
draft of the article.

\section{One dimensional  representations}\label{s1d}

In this section we recall some basic facts about the representation 
theory of finite $W$-algebras in Cartan type
$A$ from \cite{BKrep}, and then deduce a classification of one
dimensional
representations of these algebras.
We continue with the basic Lie theoretic notation from the introduction,
in particular, $\mathfrak{g} = \mathfrak{gl}_N(\C)$
and $\mathfrak{t}$ and $\mathfrak{b}$ are the usual choices of
Cartan and Borel subalgebra.

Let $\lambda = (p_n \geq \cdots \geq p_1)$ be a fixed partition of $N$.
For each $i=1,\dots,n-1$, pick non-negative integers
$s_{i,i+1}$ and $s_{i+1,i}$ such that
$s_{i,i+1}+s_{i+1,i} = p_{i+1}-p_i$. Then 
set
$s_{i,j} := s_{i,i+1}+s_{i+1,i+2}+\cdots+s_{j-1,j}$ 
and 
$s_{j,i} := s_{j,j-1}+\cdots+s_{i+2,i+1}+s_{i+1,i}$
for $1 \leq i \leq j \leq n$.
This defines a {\em shift matrix} 
$\sigma = (s_{i,j})_{1 \leq i,j \leq n}$ 
in the sense of \cite[(2.1)]{BKshifted}.
Let $l := p_n$ for short, which is called the {\em level} in \cite{BKshifted}.

We visualize this data by means of a {\em pyramid} $\pi$ 
of boxes drawn in an $n \times l$ rectangle, so that there is a box in row $i$ and column $j$
for each $1 \leq i \leq n$ and $1+s_{n,i} \leq j \leq l - s_{i,n}$
(where rows and columns are indexed as in a matrix).
Note that there are $p_i$ boxes in the $i$th row for each $i=1,\dots,n$.
Let $q_j$ be the number of boxes in the $j$th column
for $j=1,\dots,l$.
Also number the boxes of $\pi$ by $1,\dots,N$ working in order down columns starting from the leftmost column, and
 write $\row(k)$ and $\col(k)$ for the row and column numbers of the $k$th box.
For example, for $\lambda = (3,2,1)$ there are four possible
choices for $\sigma$ with corresponding pyramids
$$
\sigma = \left(\begin{array}{lll}0&1&2\\0&0&1\\0&0&0\end{array}\right)
\leftrightarrow
\:\pi=
\begin{picture}(39,0)
\put(3,-16){\line(0,1){36}}
\put(15,-16){\line(0,1){36}}
\put(27,-16){\line(0,1){24}}
\put(39,-16){\line(0,1){12}}
\put(3,-16){\line(1,0){36}}
\put(3,-4){\line(1,0){36}}
\put(3,8){\line(1,0){24}}
\put(3,20){\line(1,0){12}}
\put(9,14){\makebox(0,0){$1$}}
\put(9,2){\makebox(0,0){$2$}}
\put(9,-10){\makebox(0,0){$3$}}
\put(21,2){\makebox(0,0){$4$}}
\put(21,-10){\makebox(0,0){$5$}}
\put(33,-10){\makebox(0,0){$6$}}
\end{picture}\:,
\qquad
\sigma = \left(\begin{array}{lll}0&1&1\\0&0&0\\1&1&0\end{array}\right)
\leftrightarrow
\:\pi=
\begin{picture}(39,0)
\put(3,-16){\line(0,1){12}}
\put(15,-16){\line(0,1){36}}
\put(27,-16){\line(0,1){36}}
\put(39,-16){\line(0,1){24}}
\put(3,-16){\line(1,0){36}}
\put(3,-4){\line(1,0){36}}
\put(15,8){\line(1,0){24}}
\put(27,20){\line(-1,0){12}}
\put(9,-10){\makebox(0,0){$1$}}
\put(21,14){\makebox(0,0){$2$}}
\put(21,2){\makebox(0,0){$3$}}
\put(21,-10){\makebox(0,0){$4$}}
\put(33,-10){\makebox(0,0){$6$}}
\put(33,2){\makebox(0,0){$5$}}
\end{picture}\:,
$$
$$
\sigma = \left(\begin{array}{lll}0&0&1\\1&0&1\\1&0&0\end{array}\right)
\leftrightarrow
\:\pi=\begin{picture}(39,0)
\put(3,-16){\line(0,1){24}}
\put(15,-16){\line(0,1){36}}
\put(27,-16){\line(0,1){36}}
\put(39,-16){\line(0,1){12}}
\put(3,-16){\line(1,0){36}}
\put(3,-4){\line(1,0){36}}
\put(3,8){\line(1,0){24}}
\put(15,20){\line(1,0){12}}
\put(9,2){\makebox(0,0){$1$}}
\put(9,-10){\makebox(0,0){$2$}}
\put(21,14){\makebox(0,0){$3$}}
\put(21,2){\makebox(0,0){$4$}}
\put(21,-10){\makebox(0,0){$5$}}
\put(33,-10){\makebox(0,0){$6$}}
\end{picture}\:,
\qquad
\sigma = \left(\begin{array}{lll}0&0&0\\1&0&0\\2&1&0\end{array}\right)
\leftrightarrow
\:\pi=\begin{picture}(39,0)
\put(3,-16){\line(0,1){12}}
\put(15,-16){\line(0,1){24}}
\put(27,-16){\line(0,1){36}}
\put(39,-16){\line(0,1){36}}
\put(3,-16){\line(1,0){36}}
\put(3,-4){\line(1,0){36}}
\put(15,8){\line(1,0){24}}
\put(27,20){\line(1,0){12}}
\put(9,-10){\makebox(0,0){$1$}}
\put(21,2){\makebox(0,0){$2$}}
\put(21,-10){\makebox(0,0){$3$}}
\put(33,14){\makebox(0,0){$4$}}
\put(33,2){\makebox(0,0){$5$}}
\put(33,-10){\makebox(0,0){$6$}}
\end{picture}\:.
$$
If $\sigma$ is upper-triangular then $\pi$
coincides with
the usual Young diagram of the partition $\lambda$; we refer to this as
the {\em left-justified} case.

By a {\em $\pi$-tableau}, we mean a filling 
of the boxes of the pyramid $\pi$
by complex numbers; the left-justified tableaux 
from the introduction are a special case.
The definitions of {\em column-strict},
{\em column-connected}
and {\em row-equivalence} formulated in 
the introduction in the left-justified case
extend without change to $\pi$-tableaux.
Also say a $\pi$-tableau $A$ is {\em row-standard}
if its entries are non-decreasing along rows from left to right, meaning that
$a \not> b$ whenever $a$ and $b$ are two entries from the same row with
$a$ located to the left of $b$.

We next define two essential maps from $\pi$-tableaux to $\mathfrak{t}^*$,
denoted $\gamma$ and $\rho$ and
called {\em column reading} and {\em row reading}, respectively.
First, for a $\pi$-tableau $A$, we
let \begin{equation}\label{gammadef}
\gamma(A) := \sum_{i=1}^n a_i \eps_i
\end{equation} 
where 
$(a_1,\dots,a_N)$ is the sequence of complex numbers obtained by reading the entries of $A$
 in order down columns starting with the leftmost column;
so $a_i$ is the entry in the $i$th box of $A$.
For $\rho(A)$, we first need to convert $A$ into a row-standard
$\pi$-tableau, which we do by repeatedly transposing pairs of entries
$a > b$ in the same row with $a$ located to the left of $b$ until 
we get to a (uniquely determined) row-standard tableau $A'$.
Then let 
\begin{equation}\label{rhoAdef}
\rho(A) := \sum_{i=1}^n a_i' \eps_i
\end{equation}
where
$(a_1',\dots,a_n')$ is the sequence obtained by reading the entries
of $A'$ in order along rows starting with the top row.
Note the map $\gamma$ is obviously bijective, but $\rho$ is definitely
not.

Let $e \in \mathfrak{g}$ be the
nilpotent matrix
$$
e := \sum_{\substack{1 \leq i,j \leq N\\
\row(i) = \row(j)\\
\col(i) =\col(j)-1}} e_{i,j}
$$
 of Jordan type $\lambda$.
Here $e_{i,j}$ denotes the $ij$-matrix unit.
Introduce a $\Z$-grading $\mathfrak{g} = \bigoplus_{d \in \Z} \mathfrak{g}(d)$
by declaring that $e_{i,j}$ is of degree $2(\col(j)-\col(i))$;
in particular, $e$ is homogeneous of degree $2$. 
Let $\mathfrak{m} := \bigoplus_{d < 0} \mathfrak{g}(d)$,
$\mathfrak{h} := \mathfrak{g}(0)$ and $\mathfrak{p} := \bigoplus_{d \geq 0} 
\mathfrak{g}(d)$.
So $\mathfrak{p}$ is the standard parabolic subalgebra 
with Levi factor $\mathfrak{h}$, and
$\mathfrak{h}$ is just the diagonally embedded subalgebra
$\mathfrak{gl}_{q_1}(\C)\oplus\cdots\oplus \mathfrak{gl}_{q_l}(\C)$.
Let $\mathfrak{g}^e$ (resp.\ $\mathfrak{t}^e$) be the centralizer of $e$
in $\mathfrak{g}$ (resp.\ $\mathfrak{t})$.
It is important that
$\mathfrak{g}^e \subseteq \mathfrak{p}$.

Let
$\chi:\mathfrak{m} \rightarrow \C$ be the Lie algebra homomorphism
$x \mapsto (x,e)$ where $(.,.)$ is the trace form.
Let $\mathfrak{m}_\chi := \{x - \chi(x)\:|\:x \in \mathfrak{m}\}
\subseteq U(\mathfrak{m})$.
The {\em finite $W$-algebra} is the following subalgebra of $U(\mathfrak{p})$:
\begin{equation}\label{fw}
U(\mathfrak{g},e) := \{u \in U(\mathfrak{p})\:|\:
\mathfrak{m}_\chi u \subseteq 
U(\mathfrak{g}) \mathfrak{m}_\chi\}.
\end{equation}
This definition originates in work of 
Kostant \cite{K}, Lynch \cite{Ly} and M\oe glin \cite{MW}, 
and is a special
case of the construction due to Premet \cite{Pslice} and then
Gan and Ginzburg \cite{GG} of non-commutative filtered
deformations of the coordinate algebra of the Slodowy slice associated
to the nilpotent orbit $G \cdot e$; 
the terminology ``finite $W$-algebra'' has emerged because
they are the finite dimensional analogues of the vertex $W$-algebras
constructed in \cite{KRW}.
Of course the definition 
depends implicitly on the choice of grading (hence on $\pi$), but up to isomorphism the algebra
$U(\mathfrak{g},e)$ is independent of this choice; see 
\cite[Corollary 10.3]{BKshifted}.
More conceptual proofs of this independence (valid in all Cartan types) were given subsequently in \cite[Theorem 1]{BG} and \cite[Proposition 3.1.2]{Lsymplectic}.

A special feature of the Cartan type $A$ case is that a complete set of 
generators and relations for $U(\mathfrak{g},e)$ is known; 
see \cite[Theorem 10.1]{BKshifted}. The generators are certain explicit elements 
\begin{align*}
\{D_i^{(r)}\:&|\:1 \leq i \leq n, r > 0\}\\
\{E_i^{(r)}\:&|\:1 \leq i < n, r > s_{i,i+1}\}\\
\{F_i^{(r)}\:&|\:1 \leq i < n, r > s_{i+1,i}\}
\end{align*}
of $U(\mathfrak{p})$ defined in \cite[$\S$9]{BKshifted},
and the relations are the defining relations for the
shifted Yangian $Y_n(\sigma)$ 
recorded in \cite[(2.4)--(2.15)]{BKshifted},
together with the relations $D_1^{(r)} = 0$ for $r > p_1$.
These generators and relations were exploited in \cite{BKrep} to classify
the
finite dimensional irreducible $U(\mathfrak{g},e)$-modules.

To recall this classification in more detail, by a {\em highest weight vector} 
in a $U(\mathfrak{g},e)$-module, we mean
a common eigenvector for all $D_i^{(r)}$
which is annihilated by all $E_j^{(s)}$.
Assume that $v_+$ is a non-zero highest weight vector in a left module.
Let $a_i^{(r)}\in\C$ be defined from
$D_i^{(r)} v_+ = a_i^{(r)} v_+$ and 
define $a_{i,1},\dots,a_{i,p_i} \in \C$ by factoring
\begin{equation}\label{factor}
u^{p_i} + a_i^{(1)} u^{p_i-1}+\cdots+a_i^{(p_i)} =
(u+a_{i,1})\cdots(u+a_{i,p_i}).
\end{equation}
Combining \cite[Theorem 3.5]{BKrep} for $j=i$ with the definition \cite[(2.34)]{BKrep},
it follows that the elements $D_i^{(r)}$ for $r > p_i$ 
lie in the left ideal of $U(\mathfrak{g},e)$ generated by all $E_j^{(s)}$, hence
$a_i^{(r)} = 0$ for $r > p_i$.
So we have for all $r > 0$ that
\begin{equation}\label{esf}
D_i^{(r)} v_+ = e_r(a_{i,1},\dots,a_{i,p_i}) v_+,
\end{equation}
where $e_r(a_{i,1},\dots,a_{i,p_i})$ is the $r$th elementary symmetric polynomial in the complex numbers
$a_{i,1},\dots,a_{i,p_i}$.
We record this by writing the complex numbers
$a_{i,1}-i,\dots,a_{i,p_i}-i$ into the boxes on the $i$th row of the pyramid $\pi$ 
to obtain a $\pi$-tableau $A$, which we refer to as the {\em type} of the original highest weight vector $v_+$.
Of course $A$ here is defined only up to row-equivalence.

Conversely, given a $\pi$-tableau $A$, 
there is a unique (up to isomorphism) irreducible left $U(\mathfrak{g},e)$-module
$L(A,e)$ generated by a highest weight vector of type $A$,
with $L(A,e) \cong L(B,e)$ if and only if
$A \sim B$.
The module $L(A,e)$ is constructed in \cite[$\S$6.1]{BKrep} as the unique irreducible quotient of the
{\em Verma module} $M(A,e)$, which is the universal highest weight module of type $A$; see also \cite[$\S$4.2]{BGK} for a 
different construction of Verma modules which avoids the explicit use
of generators and relations (so makes sense in other Cartan types).

\begin{Remark}\rm
A basic question is to compute the composition multiplicities
$[M(A,e):L(B,e)]$.
In \cite[Conjecture 7.17]{BKrep}, we conjectured
for any $\pi$-tableaux $A$ and $B$ with integer entries that
\begin{equation}\label{conj}
[M(A,e):L(B,e)] = [M(\rho(A)):L(\rho(B))],
\end{equation}
the numbers on the right hand side being known by 
(\ref{kldef}) and (\ref{bigkl}).
Although not needed in the present article,
we want to point out that this conjecture is now a theorem of Losev; see
\cite[Theorems 4.1 and 4.3]{LcatO}.
Strictly speaking, 
to get from Losev's result to (\ref{conj})
one needs to identify the Verma modules 
$M(A,e)$ defined here with the ones in \cite{LcatO},
but this has now been checked thanks to some recent work of Brown and
Goodwin \cite{BrG};
see the proof of Theorem~\ref{labels} below for a fuller discussion.
In arbitrary standard Levi type, there is an analogous
conjecture formulated roughly in \cite{VD}, which can also be proved
using Losev's work.
\end{Remark}

The highest weight classification of finite dimensional irreducible
$U(\mathfrak{g},e)$-modules is as follows.

\begin{Theorem}[{\cite[Theorem 7.9]{BKrep}}]\label{fdc}
For a $\pi$-tableau $A$, 
$L(A,e)$ is finite dimensional if and only if $A$
is row-equivalent to a column-strict tableau.
Hence, 
as $A$ runs over a set of representatives for the row-equivalence classes of column-strict $\pi$-tableaux,
the modules
$\{L(A,e)\}$ give a complete set of pairwise inequivalent finite dimensional irreducible left $U(\mathfrak{g},e)$-modules.
\end{Theorem}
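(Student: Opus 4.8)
The plan is to follow the template of Drinfeld's classification of finite dimensional modules over Yangians, exploiting the explicit presentation of $U(\mathfrak{g},e)$ as a truncated shifted Yangian (generators $D_i^{(r)},E_i^{(r)},F_i^{(r)}$, shifted Yangian relations, plus $D_1^{(r)}=0$ for $r>p_1$) recalled above. There are three things to prove: (i) every finite dimensional irreducible $U(\mathfrak{g},e)$-module is isomorphic to some $L(A,e)$; (ii) if $A$ is row-equivalent to a column-strict tableau then $L(A,e)$ is finite dimensional; (iii) conversely, if $L(A,e)$ is finite dimensional then $A$ is row-equivalent to a column-strict tableau. Granting these, the final ``Hence'' clause is immediate: combining (i), (ii), (iii) with the already-recorded fact that $L(A,e)\cong L(B,e)$ if and only if $A\sim B$, the modules $L(A,e)$ with $A$ ranging over row-equivalence classes of column-strict $\pi$-tableaux exhaust, without repetition, the finite dimensional irreducibles.

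For (i) I would first set up a triangular decomposition $U(\mathfrak{g},e)=U^{-}U^{0}U^{+}$, where $U^{0}$ is the commutative subalgebra generated by all $D_i^{(r)}$ and $U^{\pm}$ are generated by the $E$'s and $F$'s, using a PBW-type basis for the shifted Yangian (as in \cite{BKshifted}). Given a finite dimensional irreducible $M$, decompose it into generalized $U^{0}$-eigenspaces; the bracket relations $[D_i^{(r)},E_j^{(s)}]$ and $[D_i^{(r)},F_j^{(s)}]$ show that the $E_j^{(s)}$ and $F_j^{(s)}$ move these eigenvalues compatibly with a partial order on the eigenvalue data. Choosing an eigenspace maximal in that order, every vector in it is killed by all $E_j^{(s)}$, hence is a highest weight vector; since $M$ is irreducible it is generated by that vector, so $M=L(A,e)$ for $A$ its type. (This also shows $A\mapsto L(A,e)$ is onto the set of finite dimensional irreducibles.)

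For (ii) I would use the comultiplication on shifted Yangians from \cite{BKshifted}: splitting the pyramid $\pi$ into its $l$ columns and iterating the coproduct produces an algebra homomorphism $U(\mathfrak{g},e)\to U(\mathfrak{gl}_{q_1})\otimes\cdots\otimes U(\mathfrak{gl}_{q_l})$, where the $W$-algebra of a single-column pyramid of height $q_j$ is just $U(\mathfrak{gl}_{q_j})$ (there $e=0$), and one checks the truncation relations $D_1^{(r)}=0$ are respected. If $A$ is column-strict (which we may assume after a row-equivalence), then each column of $A$ is a dominant integral highest weight for the corresponding $\mathfrak{gl}_{q_j}$; pulling back the tensor product of the associated finite dimensional irreducible $\mathfrak{gl}_{q_j}$-modules and computing the $D_i^{(r)}$-eigenvalues on the product of the highest weight vectors via the coproduct and (\ref{factor}), one finds a nonzero highest weight vector of type $A$. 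The $U(\mathfrak{g},e)$-submodule it generates is finite dimensional with irreducible quotient $L(A,e)$, so $L(A,e)$ is finite dimensional.

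For (iii), suppose $L(A,e)$ is finite dimensional with highest weight vector $v_{+}$ of type $A$. For each $i=1,\dots,n-1$ the subalgebra generated by $D_i^{(r)},D_{i+1}^{(r)},E_i^{(r)},F_i^{(r)}$ is, by the shifted Yangian relations, a quotient of a rank two shifted Yangian; as $v_{+}$ is killed by the $E_i^{(r)}$ it is a highest weight vector for this subalgebra, with highest weight read off from rows $i,i+1$ of $A$ via (\ref{factor}), generating a finite dimensional submodule whose irreducible quotient is a finite dimensional irreducible module for the rank two algebra. The technical heart is then the rank two classification: a direct computation with the explicit relations --- the shifted-Yangian form of Drinfeld's polynomial criterion --- shows this forces rows $i$ and $i+1$ to be ``column-strict-compatible'', i.e.\ their entries can be reordered so that the two-row subtableau on rows $i,i+1$ becomes column-strict. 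Finally, a combinatorial lemma upgrades these $n-1$ pairwise conditions to a single row-equivalence making all of $A$ column-strict. I expect the rank two base case to be the main obstacle: extracting the precise Drinfeld-polynomial condition from the relations and identifying it with column-strict-compatibility of two rows. A secondary, purely combinatorial, difficulty is the final gluing step, and here one should be alert that it genuinely uses the partition inequalities $p_1\le\cdots\le p_n$ (equivalently, the pyramid having at most $q_j$ boxes in column $j$); without them the pairwise conditions need not globalize.
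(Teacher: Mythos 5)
The statement you are proving is cited by the paper from \cite[Theorem 7.9]{BKrep}; the paper itself does not reprove it but only sketches the ``if'' direction, via the standard modules $V(A,e)$ defined by restricting a finite dimensional irreducible $\mathfrak{p}$-module $V(A)$ along $U(\mathfrak{g},e)\hookrightarrow U(\mathfrak{p})$. Your part (ii) is the same argument in a different dress: the map you describe (split $\pi$ into its $l$ columns, then apply the shifted-Yangian coproduct) is precisely the Miura-type homomorphism $U(\mathfrak{g},e)\hookrightarrow U(\mathfrak{p})\twoheadrightarrow U(\mathfrak{h})=U(\mathfrak{gl}_{q_1})\otimes\cdots\otimes U(\mathfrak{gl}_{q_l})$, and pulling back a tensor product of $\mathfrak{gl}_{q_j}$-irreducibles along it is exactly the paper's $V(A,e)$. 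Note that you do not actually need the general coproduct to descend through the truncation ideal for this: the Miura map is defined directly on $U(\mathfrak{g},e)$, which sidesteps that worry. Part (i) (existence of a triangular decomposition and the usual highest-weight extraction) is also standard and is established in \cite{BKrep}.

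The genuine gap is in part (iii), and it is substantial. You invoke ``the shifted-Yangian form of Drinfeld's polynomial criterion'' for the rank-two subalgebra generated by $D_i^{(r)},D_{i+1}^{(r)},E_i^{(r)},F_i^{(r)}$, but you neither state the precise criterion nor indicate how to prove it; this is not a routine computation and is in fact the hard core of the classification in \cite{BKrep}. (It is fine that the subalgebra is only a \emph{quotient} of the relevant rank-two shifted Yangian -- a finite dimensional irreducible pulls back -- so that part of your framing is sound.) For the combinatorial gluing step, your instinct is correct and your warning about the inequalities $p_1\le\cdots\le p_n$ is well placed: the lemma does hold, and the mechanism is that the pairwise condition forces, for each coset $z$ of $\C/\Z$, the multiplicities $m_i^z:=|\{\text{coset-}z\text{ entries of row }i\}|$ to be weakly increasing in $i$; within each coset the totally ordered ``sort each row increasingly and stack'' arrangement is then automatically column-strict whenever every consecutive pair admits \emph{some} column-strict arrangement, and the identity $\lambda'=\bigsqcup_z(\lambda^z)'$ for a coordinate-wise decomposition $\lambda=\sum_z\lambda^z$ of partitions guarantees the per-coset column shapes tile the Young diagram $\lambda$. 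Since you give neither the rank-two classification nor this gluing argument, what you have is a correct skeleton rather than a proof; to make it complete you would essentially have to reproduce the relevant parts of \cite[\S\S 6--7]{BKrep}.
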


The proof of the ``if'' part of Theorem~\ref{fdc}
given in \cite{BKrep}
is quite straightforward, and is based on the construction of 
another family of $U(\mathfrak{g},e)$-modules
called {standard modules}
indexed by column-strict tableaux.
To define these, recall the weight $\rho$ from (\ref{rhodef}),
and also 
introduce the special weight
\begin{equation}\label{betadef}
\beta :=\!\!\!\!\!\! \sum_{\substack{1 \leq i,j \leq N \\ \col(i) > \col(j)}}
(\eps_i - \eps_j) = \!
\sum_{i=1}^N ((q_1+\cdots+q_{\col(i)-1}) - (q_{\col(i)+1} + \cdots + q_l))\eps_i
\in \mathfrak{t}^*.
\end{equation}
This is the same as the weight $\beta$ 
defined in \cite{BGK}, which is
important because of \cite[Corollary 2.9]{BGK} 
(reproduced in Theorem~\ref{twist}
below).
Notice 
that $A$ is column-strict if and only if
$\gamma(A) - \beta - \rho$
is a dominant weight for the Lie algebra $\mathfrak{h} = \mathfrak{g}(0)$
with respect to the Borel subalgebra $\mathfrak{b}\cap\mathfrak{h}$.
Assuming that is the case,
there is a finite dimensional irreducible $\mathfrak{p}$-module 
$V(A)$ generated by a $\mathfrak{b}$-highest weight vector 
of this weight.
Then we restrict the left $U(\mathfrak{p})$-module $V(A)$ to
the subalgebra $U(\mathfrak{g},e)$
to obtain the {\em standard module} denote $V(A,e)$.
Thus $V(A,e) = V(A)$ as vector spaces, but we use different notation 
since 
one is a $U(\mathfrak{g},e)$-module and the other is a $U(\mathfrak{p})$-module.
As observed in the last paragraph of the proof of \cite[Theorem 7.9]{BKrep},
the original $\mathfrak{b}$-highest weight vector in $V(A)$
is a highest weight vector of type $A$ in $V(A,e)$;
this can also be checked directly 
by arguing as in the proof of 
\cite[Lemma 5.4]{BGK}.
It follows that $L(A,e)$ is a composition factor of the finite dimensional module 
$V(A,e)$, hence  $L(A,e)$ is indeed finite dimensional when $A$ is column-strict.

We are interested next in one dimensional modules.
It is obvious from the definitions that $V(A)$ 
is one dimensional if and only if
$A$ is column-connected. Since $L(A,e)$ is a subquotient of $V(A,e)$,
it follows that $L(A,e)$ is one-dimensional if $A$ is
row-equivalent to a column-connected tableau.
We are going to prove 
the converse of this statement to obtain the following classification of
one dimensional $U(\mathfrak{g},e)$-modules. 
The possibility of doing this was suggested already by Losev in the discussion in the paragraph after \cite[Theorem 5.2.1]{L1D}.

\begin{Theorem}\label{class}
For a $\pi$-tableau $A$, $L(A,e)$ is one dimensional
if and only if $A$ is row-equivalent to a column-connected tableau.
Hence, as $A$ runs over a set of representatives for the row-equivalence classes
of column-connected $\pi$-tableaux, the modules $\{L(A,e)\}$
give a complete set of pairwise inequivalent one dimensional left $U(\mathfrak{g},e)$-modules.
\end{Theorem}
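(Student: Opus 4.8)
The ``if'' direction has been established above, so the substance of the theorem is the converse: every one dimensional $U(\g,e)$-module equals $L(A,e)$ for some column-connected $\pi$-tableau $A$. A one dimensional module is precisely a character of the maximal commutative quotient $U(\g,e)^{\ab}$, i.e.\ a point of the affine variety $\operatorname{Specm}U(\g,e)^{\ab}$, so what must be shown is that $A\mapsto L(A,e)$ is \emph{onto} this variety. The plan is to realize both sides as affine spaces of the same dimension and identify them, using \cite[Theorem 3.3]{Pabelian} to control the target.

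First I would parametrize the source. A column-connected $\pi$-tableau is determined by its bottom row $(b_1,\dots,b_l)\in\C^l$, where $l$ is the number of columns of $\pi$, since above $b_j$ one finds $b_j+1$, then $b_j+2$, and so on up the column of height $q_j$. Two column-connected $\pi$-tableaux are row-equivalent exactly when, for each $h$, the multiset of their bottom-row entries lying under the columns of $\pi$ of height $h$ agrees; hence the row-equivalence classes of column-connected $\pi$-tableaux are parametrized by $\prod_{h\ge 1}\operatorname{Sym}^{c_h}(\C)$ with $c_h:=\#\{j : q_j=h\}$. Since $\sum_h c_h=l$ this is an affine space of dimension $l$, and it depends only on $\la$, the multiset $\{q_1,\dots,q_l\}$ of column heights of any pyramid of shape $\la$ being the transpose partition $\la'$. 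Combining the ``if'' direction with the fact recalled above that $L(A,e)\cong L(B,e)$ if and only if $A\sim B$, and noting that the eigenvalues of the generators of $U(\g,e)$ on $L(A,e)$ depend polynomially on $b_1,\dots,b_l$, the rule $A\mapsto L(A,e)$ defines an \emph{injective} morphism $\Psi$ from this affine space into $\operatorname{Specm}U(\g,e)^{\ab}$.

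Next I would invoke \cite[Theorem 3.3]{Pabelian}. The centralizer of $e$ in $GL_N(\C)$ is connected, so the component group occurring in Premet's theorem is trivial and $U(\g,e)^{\ab}$ is the coordinate ring of an affine space; the dimension of that affine space, for $\g=\mathfrak{gl}_N(\C)$ and $e$ of Jordan type $\la$, is the largest part $\la_1$ of $\la$, i.e.\ again $l$. Thus $\Psi$ is an injective morphism between affine spaces of the common dimension $l$, hence surjective (an injective endomorphism of an affine space over $\C$ is onto). This shows every one dimensional $U(\g,e)$-module is $L(A,e)$ with $A$ column-connected; the ``hence'' clause then follows from injectivity and surjectivity of $\Psi$ together. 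A more explicit variant, and the one that also pins down the highest weights of the one dimensional modules, is to read off generators of $U(\g,e)^{\ab}$ from \cite[Theorem 3.3]{Pabelian}, compute their eigenvalues on the standard module $V(A)$ of a column-connected $A$ in terms of $b_1,\dots,b_l$ via the eigenvalue formula (\ref{esf}) and the description of standard modules in \cite[Corollary 2.9]{BGK}, and check that these eigenvalues realize every value as the $b_j$ vary.

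The step I expect to be the main obstacle is the dictionary between Premet's picture and ours: \cite[Theorem 3.3]{Pabelian} describes $U(\g,e)^{\ab}$ through the Slodowy slice and the centralizer $\g^e$, whereas here $U(\g,e)$ is given by the shifted-Yangian presentation of \cite{BKshifted,BKrep}, so Premet's coordinate functions must be matched with explicit symmetric functions of the $D_i^{(r)}$-eigenvalues and the two normalizations reconciled. The other ingredients --- the combinatorial identification of column-connected $\pi$-tableaux modulo row-equivalence with an affine space of dimension $l$, the fact that Premet's affine space also has dimension $l=\la_1$, and the concluding surjectivity argument --- are routine.
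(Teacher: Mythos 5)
Your proof is correct, but it is genuinely different from the paper's. The paper deliberately uses only the \emph{easier} half of Premet's \cite[Theorem 3.3]{Pabelian} (that the listed $D_i^{(r)}$'s generate $U(\mathfrak{g},e)^{\ab}$, giving a surjection $\C[X]\twoheadrightarrow U(\mathfrak{g},e)^{\ab}$ with $X\cong\mathbb{A}^l$), and then proves by hand that every point of $X$ is realized by a one dimensional module $V(A,e)$ with $A$ column-connected: Lemma~\ref{Stup} explicitly solves, by induction on rows and degree, the triangular system $e_r(a_{i,1},\dots,a_{i,p_i})=a_i^{(r)}$ subject to the column-connectedness constraint $a_{i,p_i-p_{i-1}+r}=a_{i-1,r}$. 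This simultaneously proves Theorem~\ref{class} and gives an alternative, more direct proof of the harder half of Premet's theorem (that the surjection is an isomorphism), replacing Premet's indirect dimension bound. Your argument instead takes Premet's full theorem as a black box so that the target is known to be $\mathbb{A}^l$, identifies the source $\prod_h\operatorname{Sym}^{c_h}(\C)$ with $\mathbb{A}^l$ as you explain, checks injectivity from $L(A,e)\cong L(B,e)\Leftrightarrow A\sim B$, and concludes surjectivity by Ax--Grothendieck (an injective polynomial self-map of $\C^l$ is onto). This is slicker and avoids the explicit inversion, but it imports both the hard direction of Premet's theorem and the Ax--Grothendieck theorem, and consequently does not yield the paper's by-product of a new proof of the second step of \cite[Theorem 3.3]{Pabelian}. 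Your closing ``more explicit variant'' is essentially what the paper carries out; had you developed it, you would have arrived at Lemma~\ref{Stup}.
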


\begin{Corollary}\label{mc}
Every one dimensional left $U(\mathfrak{g},e)$-module is
isomorphic to a standard module $V(A,e)$ for some
column-connected $\pi$-tableau $A$, so arises as the restriction of
a one dimensional $U(\mathfrak{p})$-module.
\end{Corollary}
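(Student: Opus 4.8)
The plan is to read this off directly from Theorem~\ref{class} together with the discussion immediately preceding it. Given a one-dimensional left $U(\mathfrak{g},e)$-module, Theorem~\ref{class} tells us it is isomorphic to $L(A,e)$ for some $\pi$-tableau $A$ that is row-equivalent to a column-connected tableau; since $L(A,e) \cong L(B,e)$ whenever $A \sim B$, I would replace $A$ by such a column-connected representative at the outset, so that we may assume $A$ is itself column-connected.

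The next step is the observation, already recorded in the paragraph before Theorem~\ref{class}, that when $A$ is column-connected the $\mathfrak{p}$-module $V(A)$ is one-dimensional. Hence the standard module $V(A,e)$, which coincides with $V(A)$ as a vector space, is also one-dimensional, and therefore irreducible as a $U(\mathfrak{g},e)$-module, a one-dimensional module having no proper nonzero submodule. But $L(A,e)$ is a composition factor of $V(A,e)$ by the discussion following Theorem~\ref{fdc}, so $L(A,e) \cong V(A,e)$. Finally, by the very definition of the standard module, $V(A,e)$ is nothing but the restriction along the inclusion $U(\mathfrak{g},e) \subseteq U(\mathfrak{p})$ of the one-dimensional $U(\mathfrak{p})$-module $V(A)$; this yields the last assertion.

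There is no real obstacle here once Theorem~\ref{class} is in hand: the corollary is a formal consequence of that classification and the construction of standard modules. The only points requiring any attention are the harmless passage from a row-equivalence class to a genuine column-connected representative, and the elementary fact that a one-dimensional module is automatically irreducible, so that its unique composition factor $L(A,e)$ must be the whole of $V(A,e)$.
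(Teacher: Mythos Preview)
Your proof is correct and is essentially the approach the paper takes. The only presentational difference is that the paper proves Theorem~\ref{class} and Corollary~\ref{mc} simultaneously in one argument (via Premet's description of $U(\mathfrak{g},e)^{\ab}$ and Lemma~\ref{Stup}), concluding at the end that ``every one dimensional left $U(\mathfrak{g},e)$-module is isomorphic to $L(A,e) \cong V(A,e)$ for some column-connected $\pi$-tableau $A$''; you instead take Theorem~\ref{class} as already established and deduce the corollary from it, which is the natural way to treat a corollary and uses exactly the same observation that $V(A,e)$ is one-dimensional (hence irreducible, hence equal to $L(A,e)$) when $A$ is column-connected.
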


The rest of the section is devoted to proving Theorem~\ref{class} and
its corollary.
To do this,
we need to review the following theorem of Premet describing the algebra
 $U(\mathfrak{g},e)^{\ab}$, that is,
the quotient of $U(\mathfrak{g},e)$
by the two-sided ideal generated by all commutators $[x,y]$ for $x,y \in U(\mathfrak{g},e)$. 
Of course 
one dimensional $U(\mathfrak{g},e)$-modules are identified with
one dimensional $U(\mathfrak{g},e)^{\ab}$-modules.
It is convenient at this point to set $p_0 := 0$.

\begin{Theorem}[{\cite[Theorem 3.3]{Pabelian}}]\label{pt}
The algebra $U(\mathfrak{g},e)^{\ab}$ is a free polynomial algebra of
rank $l$ generated by the images of the elements
\begin{equation}\label{elts}
\{D_i^{(r)}\:|\:1 \leq i \leq n, 1 \leq r \leq p_{i} - p_{i-1}\}.
\end{equation}
\end{Theorem}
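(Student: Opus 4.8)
The plan is to compute $U(\mathfrak{g},e)^{\ab}$ directly from the presentation of $U(\mathfrak{g},e)$ in Cartan type $A$ recalled above, namely $U(\mathfrak{g},e) = Y_n(\sigma)\big/\big(D_1^{(r)}\ :\ r > p_1\big)$, where $Y_n(\sigma)$ is the shifted Yangian on the generators $D_i^{(r)}, E_i^{(r)}, F_i^{(r)}$ subject to the relations \cite[(2.4)--(2.15)]{BKshifted}. Since abelianizing a presentation yields a presentation of the abelianization, the first step is to observe that the images of \emph{all} the $E_j^{(s)}$ and \emph{all} the $F_j^{(s)}$ vanish in $U(\mathfrak{g},e)^{\ab}$: indeed, among the defining relations one has (the $r=1$ case of the $[D,E]$ and $[D,F]$ relations) that $[D_j^{(1)}, E_j^{(s)}] = E_j^{(s)}$ and $[D_j^{(1)}, F_j^{(s)}] = -F_j^{(s)}$, and as the left-hand sides are commutators they die in $U(\mathfrak{g},e)^{\ab}$. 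Hence $U(\mathfrak{g},e)^{\ab}$ is generated by the images of the $D_i^{(r)}$ alone, which already pairwise commute by \cite[(2.4)]{BKshifted}.

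The second step is to identify which relations among the $D_i^{(r)}$ survive. Every defining relation of $Y_n(\sigma)$ other than $[D_i,D_j]=0$ is either of the form ``(commutator) $=$ (expression involving the $E$'s and $F$'s)'' or is a sum of iterated commutators; using $E_j^{(s)} = F_j^{(s)} = 0$, all of these become trivial in $U(\mathfrak{g},e)^{\ab}$ with the single exception of the relation expressing $[E_i^{(r)}, F_i^{(s)}]$, which has the shape
\[
[E_i^{(r)}, F_i^{(s)}] = \sum_{t=0}^{r+s-1} \widetilde{D}_i^{(t)}\, D_{i+1}^{(r+s-1-t)},
\]
where $D_i(u) := 1 + \sum_{r \geq 1} D_i^{(r)} u^{-r}$ and $\widetilde{D}_i(u) := D_i(u)^{-1}$ (so each $\widetilde{D}_i^{(t)}$ is a polynomial in $D_i^{(1)},\dots,D_i^{(t)}$). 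As $r > s_{i,i+1}$ and $s > s_{i+1,i}$ vary, the exponent $r+s-1$ ranges over all integers $\geq s_{i,i+1}+s_{i+1,i}+1 = p_{i+1}-p_i+1$, so setting the left-hand side to zero is precisely the statement that $P_i(u) := \widetilde{D}_i(u)\,D_{i+1}(u)$ is a polynomial in $u^{-1}$ of degree at most $p_{i+1}-p_i$. Together with the imposed relations $D_1^{(r)} = 0$ for $r > p_1$ (i.e.\ $\deg_{u^{-1}} D_1(u) \leq p_1$), these are \emph{all} the relations. Setting $p_0 := 0$ and $P_0(u) := D_1(u)$, we conclude
\[
U(\mathfrak{g},e)^{\ab} \;\cong\; \C\big[\,D_i^{(r)}\ :\ 1 \leq i \leq n,\ r \geq 1\,\big]\Big/ \big(\, [P_i(u)]_r\ :\ 0 \leq i \leq n-1,\ r > p_{i+1}-p_i \,\big),
\]
where $[\cdot]_r$ denotes the coefficient of $u^{-r}$.

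Now I would make the change of polynomial generators from $\{D_i^{(r)}\}$ to $\{c_i^{(r)} := [P_i(u)]_r\}$. From $D_{i+1}(u) = D_i(u)\,P_i(u)$ one gets $D_i(u) = P_0(u)P_1(u)\cdots P_{i-1}(u)$, and the triangular relation $c_i^{(r)} = D_{i+1}^{(r)} - D_i^{(r)} + (\text{a polynomial in the } D_j^{(s)} \text{ with } s < r)$ together with its inverse shows that $\{c_i^{(r)}\ :\ 0 \leq i \leq n-1,\ r \geq 1\}$ is again a free polynomial generating set of $\C[D_i^{(r)}\ :\ 1 \leq i \leq n,\ r \geq 1]$. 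In these coordinates the relations become simply $c_i^{(r)} = 0$ for $r > p_{i+1}-p_i$, so
\[
U(\mathfrak{g},e)^{\ab} \;\cong\; \C\big[\,c_i^{(r)}\ :\ 0 \leq i \leq n-1,\ 1 \leq r \leq p_{i+1}-p_i\,\big],
\]
a free polynomial algebra of rank $\sum_{i=0}^{n-1}(p_{i+1}-p_i) = p_n = l$. Finally, to see that the elements \eqref{elts} themselves form a free generating set, observe that modulo the relations one has, for $1 \leq r \leq p_{i+1}-p_i$, that $D_{i+1}^{(r)} = c_i^{(r)} + (\text{polynomial in the } c_j^{(s)} \text{ with } (j,s) \text{ strictly preceding } (i,r))$, while $D_1^{(r)} = c_0^{(r)}$ for $1 \leq r \leq p_1$; by induction the images of the $l$ elements listed in \eqref{elts} generate $U(\mathfrak{g},e)^{\ab}$, and being $l$ generators of a rank-$l$ polynomial algebra they are necessarily algebraically independent.

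The step I expect to be the main obstacle is the bookkeeping underlying the second paragraph: one must run carefully through the families \cite[(2.4)--(2.15)]{BKshifted}, verifying that nothing beyond the $[E_i^{(r)}, F_i^{(s)}]$ relation survives abelianization, and pin down the precise form of that relation and the exact range of exponents $r+s-1$ it produces — for it is exactly the constraints $r > s_{i,i+1}$, $s > s_{i+1,i}$ that force $\deg_{u^{-1}} P_i(u) \leq p_{i+1}-p_i$ and hence give the final rank $l = \sum_{i}(p_{i+1}-p_i)$.
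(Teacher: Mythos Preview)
Your argument is correct and takes a genuinely different route from the paper. Both approaches share the easy first step---showing that the $E_j^{(s)}$ and $F_j^{(s)}$ die in the abelianization so that $U(\mathfrak{g},e)^{\ab}$ is a commutative quotient of $\C[D_i^{(r)}]$---but they diverge on how to establish that the quotient is a polynomial ring of rank exactly $l$. You do this purely algebraically: you identify the surviving relations among the $D_i^{(r)}$ as coming only from $[E_i^{(r)},F_i^{(s)}]$, reinterpret them as saying each $P_i(u)=\widetilde D_i(u)D_{i+1}(u)$ truncates at degree $p_{i+1}-p_i$, and then make the triangular change of variables $D_i^{(r)}\leadsto c_i^{(r)}=[P_i(u)]_r$ to see the quotient explicitly as a free polynomial ring. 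The paper instead argues representation-theoretically: having noted the surjection $\C[X]\twoheadrightarrow U(\mathfrak{g},e)^{\ab}$ with $X\cong\mathbb{A}^l$, it constructs for every point $x\in X$ a column-connected $\pi$-tableau $A$ (via the elementary Lemma~\ref{Stup}) whose one-dimensional standard module $V(A,e)$ realizes $x$, thereby showing $\operatorname{Specm}U(\mathfrak{g},e)^{\ab}=X$. Your approach is more elementary and self-contained; the paper's approach, while less direct, simultaneously proves that every one-dimensional $U(\mathfrak{g},e)$-module arises from a column-connected tableau, which is exactly what is needed for Theorem~\ref{class} and Corollary~\ref{mc}. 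One small point: your final triangularity claim ``$(j,s)$ strictly preceding $(i,r)$'' should be made precise (e.g.\ lexicographic with $r$ dominant), and you should note that for $j<i$ with $p_{j+1}-p_j<r$ the term $c_j^{(r)}$ simply vanishes in the quotient; with that, the induction goes through cleanly.
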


Premet's proof of Theorem~\ref{pt} is in two parts. The first step is to show
that $U(\mathfrak{g},e)^{\ab}$ is generated by the images of the commuting
elements listed in (\ref{elts}). This is 
a straightforward consequence of the defining relations for $U(\mathfrak{g},e)$
from \cite{BKshifted}, and is explained in the first two paragraphs of the proof of
\cite[Theorem 3.3]{Pabelian}.
Thus,
letting $X \cong \mathbb{A}^l$ be the affine space with algebraically independent
coordinate
functions
$\{T_i^{(r)}\:|\:1 \leq i \leq n, 1 \leq r \leq p_i - p_{i-1}\}$,
 there is a surjective map 
\begin{equation}\label{mor}
\C[X] 
 \twoheadrightarrow
U(\mathfrak{g},e)^{\ab},
\qquad T_i^{(r)} \mapsto D_i^{(r)}.
\end{equation}
This map 
identifies $\operatorname{Specm} U(\mathfrak{g},e)^{\ab}$ with a closed
subvariety of $X$.
Then to complete the proof Premet shows quite indirectly
that $\dim \operatorname{Specm}
U(\mathfrak{g},e)^{\ab} \geq l$, hence
$\operatorname{Specm} U(\mathfrak{g},e)^{\ab}
=X$ and the surjective map is an isomorphism.
In the next paragraph, we will explain an alternative argument
for this second step using the following elementary lemma.

\begin{Lemma}\label{Stup}
Given complex numbers $a_i^{(r)}$ for $1 \leq i \leq n$ and $1 \leq r \leq p_i - p_{i-1}$, 
there are complex numbers $a_{i,j}$ for $1 \leq i \leq n$ and $1 \leq j \leq p_i$ such that
\begin{align}\label{id1}
a_{i,p_i-p_{i-1}+r} &= a_{i-1,r}&&\text{for }1 \leq r \leq p_{i-1},\\
e_r(a_{i,1},\dots,a_{i,p_i}) &= a_{i}^{(r)}&&\text{for }1 \leq r \leq p_i-p_{i-1}.\label{id2}
\end{align}
\end{Lemma}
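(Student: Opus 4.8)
The statement to prove, Lemma~\ref{Stup}, is a purely combinatorial fact about elementary symmetric functions. The plan is to build the numbers $a_{i,j}$ by induction on $i$, since the two families of conditions (\ref{id1}) and (\ref{id2}) have a triangular structure: condition (\ref{id1}) says that the ``top'' $p_{i-1}$ entries of the $i$th row are forced to equal the entries of the $(i-1)$st row, while (\ref{id2}) constrains the first $p_i - p_{i-1}$ elementary symmetric functions of the full $i$th row. So at stage $i$ we already have $a_{i-1,1},\dots,a_{i-1,p_{i-1}}$ in hand; we set $a_{i,p_i-p_{i-1}+r} := a_{i-1,r}$ for $1 \leq r \leq p_{i-1}$ to satisfy (\ref{id1}), and we must then choose the remaining $k := p_i - p_{i-1}$ numbers $a_{i,1},\dots,a_{i,k}$ so that (\ref{id2}) holds. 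For $i=1$ there is no constraint of type (\ref{id1}) (recall $p_0 = 0$), and we just need $k = p_1$ numbers with prescribed elementary symmetric functions $e_1,\dots,e_{p_1}$, which exist: they are the roots (with multiplicity) of $u^{p_1} + a_1^{(1)} u^{p_1-1} + \cdots + a_1^{(p_1)}$, exactly as in the factorization (\ref{factor}).

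First I would reduce the inductive step to a univariate polynomial problem. Write $g(u) := \prod_{r=1}^{p_{i-1}} (u + a_{i-1,r})$, a known monic polynomial of degree $p_{i-1}$ built from the previous row, and let $f(u) := \prod_{j=1}^{k}(u + a_{i,j})$ be the monic degree-$k$ polynomial we wish to produce from the new unknowns. Then $\prod_{j=1}^{p_i}(u + a_{i,j}) = f(u) g(u)$, and the left-hand side of (\ref{id2}), namely $e_r(a_{i,1},\dots,a_{i,p_i})$, is precisely the coefficient of $u^{p_i - r}$ in $f(u) g(u)$. Since $g$ is already known, prescribing $e_r$ for $1 \leq r \leq k$ amounts to prescribing the top $k$ coefficients of $f(u)g(u)$ below the leading term; because $g$ is monic, the map sending the coefficient vector $(c_1,\dots,c_k)$ of $f(u) = u^k + c_1 u^{k-1} + \cdots + c_k$ to these top $k$ coefficients of the product is unitriangular, hence invertible. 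So there is a unique monic $f$ of degree $k$ with the desired top $k$ coefficients, and we take $a_{i,1},\dots,a_{i,k}$ to be the roots of that $f$ (in $\C$, counted with multiplicity, which exist by the fundamental theorem of algebra). This completes the inductive construction.

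The step I expect to be the only mildly delicate bookkeeping point is the unitriangularity claim: one must check that the coefficient of $u^{p_i - r}$ in $f(u)g(u)$, for $1 \leq r \leq k$, depends on $(c_1,\dots,c_k)$ via $c_r$ plus a function of $c_1,\dots,c_{r-1}$ only, with the $c_r$-coefficient equal to $1$. This is immediate from $g$ being monic of degree $p_{i-1} = p_i - k$: the coefficient of $u^{p_i - r}$ in $f g$ is $\sum_{a+b = r} c_a \,[u^{p_{i-1}-b}]g(u)$ with $c_0 := 1$, and the $b=0$ term contributes $c_r \cdot [u^{p_{i-1}}]g(u) = c_r$. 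I would not belabor this; a single line naming the product expansion suffices. Everything else is routine, and there are no existence obstructions because we never need the $a_{i,j}$ to be distinct or to satisfy any inequalities — arbitrary complex numbers (with multiplicity) are allowed.
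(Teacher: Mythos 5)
Your proof is correct and essentially the same as the paper's: both proceed by induction on $i$, reduce the induction step to a triangular system determining the elementary symmetric functions of the new unknowns $a_{i,1},\dots,a_{i,p_i-p_{i-1}}$, and then invoke the fundamental theorem of algebra to produce the numbers themselves. Your phrasing via the top coefficients of the product $f(u)g(u)$ and unitriangularity is merely a cleaner packaging of the paper's explicit recursion $b_i^{(r)} = a_i^{(r)} - \sum_{s=0}^{r-1} b_i^{(s)} e_{r-s}(a_{i-1,1},\dots,a_{i-1,p_{i-1}})$.
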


\begin{proof}
We prove existence of numbers $a_{i,j}$ for $1 \leq j \leq p_i$
satisfying (\ref{id1})--(\ref{id2})
by induction on $i=1,\dots,n$.
For the base case $i=1$, we define $a_{1,1},\dots,a_{1,p_1}$ from the
factorization (\ref{factor}), and (\ref{id1})--(\ref{id2}) are clear.
For the induction step, suppose we have already found $a_{i-1,1},\dots,a_{i-1,p_{i-1}}$. 
Define $a_{i,p_i-p_{i-1}+1},\dots,a_{i,p_i}$ 
so that (\ref{id1}) holds. Then we need to find complex numbers
$a_{i,1},\dots,a_{i,p_i-p_{i-1}}$ satisfying (\ref{id2}).
The equations (\ref{id2}) are equivalent to the equations
$$
b_i^{(r)} = 
a_i^{(r)} - \sum_{s=0}^{r-1} b_i^{(s)} e_{r-s}(a_{i-1,1},\dots,a_{i-1,p_{i-1}})
$$
for $1 \leq r \leq p_i -p_{i-1}$,
where $b_i^{(r)}$ denotes $e_r(a_{i,1},\dots,a_{i,p_i-p_{i-1}})$,
Proceeding by induction on $r=1,\dots,p_i-p_{i-1}$, we solve these equations
uniquely for $b_i^{(r)}$ and then define $a_{i,1},\dots,a_{i,p_i-p_{i-1}}$
by factoring
$$
u^{p_i-p_{i-1}} + b_i^{(1)} u^{p_i-p_{i-1}-1}+\cdots+b_i^{(p_i-p_{i-1})}
= (u+a_{i,1}) \cdots (u+a_{i,p_i-p_{i-1}}).
$$
This does the job.
\end{proof}

Now take any point $x \in X$,
set $a_i^{(r)} := T_i^{(r)}(x)$,
and then 
define $a_{i,j}$ according to Lemma~\ref{Stup}.
Because of (\ref{id1}), there is a column-connected $\pi$-tableau $A$ having entries
$a_{i,1}-i,\dots,a_{i,p_i}-i$ in its $i$th row for each $i=1,\dots,n$.
This tableau $A$ is unique up to row-equivalence, indeed, any two choices 
for $A$ agree up to reordering columns of the same height.
As we have already observed,
the assumption that $A$ is column-connected means that the
standard module $V(A,e)$ is one dimensional, hence 
so is $L(A,e) \cong V(A,e)$. By (\ref{esf}) and (\ref{id2}),
we see that $D_i^{(r)}$ acts on $L(A,e)$ by the scalar $a_i^{(r)}$,
showing that the point $x$ lies in $\operatorname{Specm} U(\mathfrak{g},e)^{\operatorname{ab}}$.
Thus we have established that
$\operatorname{Specm} U(\mathfrak{g},e)^{\ab}
=X$, so the map (\ref{mor}) is indeed an isomorphism
as required for the alternative proof of 
the second part of Theorem~\ref{pt} promised above.

This argument shows moreover
that every one dimensional left $U(\mathfrak{g},e)$-module is isomorphic to $L(A,e) \cong V(A,e)$ 
for some column-connected $\pi$-tableau $A$, which is enough to
complete the proofs of
Theorem~\ref{class} and Corollary~\ref{mc}.

\section{Whittaker functors and Duflo-Joseph classification}\label{swhitt}

In this section we review the definitions of the two sorts of 
Whittaker functors and explain some of
the results of Premet and Losev linking 
finite dimensional $U(\mathfrak{g},e)$-modules
to $\Prim$.

For any associative algebra $A$, we denote the category of
all left (resp.\ right) $A$-modules
by $A\lmod$ (resp.\ $\rmod A$).
If $M$ is a left $U(\mathfrak{g})$-module,
it is clear from (\ref{fw}) that the space
$H^0(\mathfrak{m}_\chi, M) := \{v \in M \:|\:\mathfrak{m}_\chi v = \bz\}$
of {\em Whittaker invariants}
is stable under left multiplication by elements
of $U(\mathfrak{g},e)$, hence it is a left
$U(\mathfrak{g},e)$-module.
So we get the functor
\begin{equation}\label{inv}
H^0(\mathfrak{m}_\chi, ?):U(\mathfrak{g})\lmod \rightarrow
U(\mathfrak{g},e)\lmod.
\end{equation}
Instead suppose that $M$ is a right $U(\mathfrak{g})$-module.
Then, by (\ref{fw}) again, the space 
$H_0(\mathfrak{m}_\chi, M) := M / M \mathfrak{m}_\chi$
of {\em Whittaker coinvariants}
is naturally a right $U(\mathfrak{g},e)$-module. 
So we have the functor
\begin{equation}\label{coinv}
H_0(\mathfrak{m}_\chi, ?):\rmod U(\mathfrak{g}) \rightarrow
\rmod U(\mathfrak{g},e).
\end{equation}
In the remainder of the section 
we review some of the basic properties of these two Whittaker functors.
Although not used here, we remark that one can also combine these functors
to obtain a remarkable
functor $H^0_0(\mathfrak{m}_\chi, ?)$ on
bimodules introduced originally by Ginzburg;
see
 \cite[$\S$3.3]{Gin} and \cite[$\S$3.5]{Lclass}.

We begin with the functor $H^0(\mathfrak{m}_\chi,?)$.
Let 
$(U(\mathfrak{g}),\mathfrak{m}_\chi)\lmod$
be the full subcategory of
$U(\mathfrak{g})\lmod$ consisting of all 
modules on which $\mathfrak{m}_\chi$ acts locally nilpotently.
By Skryabin's theorem \cite{Skryabin} (see also \cite[$\S$6]{GG}),
the functor $H^0(\mathfrak{m}_\chi,?)$
restricts to an equivalence of categories
\begin{equation*}
H^0(\mathfrak{m}_\chi, ?):(U(\mathfrak{g}),\mathfrak{m}_\chi)\lmod \rightarrow
U(\mathfrak{g},e)\lmod.
\end{equation*}
The quasi-inverse equivalence is the {\em Skryabin functor}
\begin{equation}\label{skryabinf}
S_\chi
: U(\mathfrak{g},e)\lmod
\rightarrow
(U(\mathfrak{g}),\mathfrak{m}_\chi)\lmod
\end{equation}
defined by tensoring with the $(U(\mathfrak{g}), U(\mathfrak{g},e))$-bimodule
$U(\mathfrak{g}) / U(\mathfrak{g}) \mathfrak{m}_\chi$.

This equivalence has proved useful for the study of primitive ideals
in $U(\mathfrak{g})$.
For a two-sided ideal $I$ of $U(\mathfrak{g})$, we define its
associated variety $\VA(I)$ as in \cite[$\S$9.3]{Ja},
viewing it as a closed subvariety of $\mathfrak{g}$ via the trace form.
Let $\VA'(I)$ denote the image of $\VA(I)$ under the natural 
projection $\mathfrak{g} \twoheadrightarrow [\mathfrak{g},\mathfrak{g}]
= \mathfrak{sl}_N(\C)$.
By Joseph's irreducibility theorem, it is known that
$\VA'(I)$ is the closure of a single 
nilpotent orbit for every $I \in \Prim$.
This follows in Cartan type $A$ from \cite[$\S$3.3]{JI}; for other
Cartan types see
\cite[$\S$3.10]{Joseph} as well as
\cite[Corollary 4.7]{Vogan} and \cite[Remark 3.4.4]{Lclass} 
for alternative proofs (the second of which goes via finite $W$-algebras
in the spirit of the present article).
Let $\lPrim$ denote the set of $I \in \Prim$
such that $\VA'(I)$ is the closure of the orbit $G \cdot e$
of all nilpotent 
matrices of Jordan type $\lambda$.

Given any non-zero left $U(\mathfrak{g},e)$-module $L$, we get
a two-sided ideal 
\begin{equation}\label{il}
I(L) := 
\ann_{U(\mathfrak{g})}
S_\chi(L)
\end{equation}
of $U(\mathfrak{g})$ by applying Skryabin's functor (\ref{skryabinf})
and then taking the annihilator.
If $L$ is irreducible then
Skryabin's theorem implies that $I(L) \in \Prim$.
The following fundamental theorem of Premet
implies moreover that $I(L) \in \lPrim$ 
if $L$ is 
finite dimensional and irreducible.
Premet's proof of this result also uses
Joseph's irreducibility theorem.
Although not needed here, we remark  that the converse 
of the second statement of the theorem is also true by 
\cite[Theorem 1.2.2(ii),(ix)]{Lsymplectic}.

\begin{Theorem}[{\cite[Theorem 3.1]{Pslodowy}}]\label{t31}
For any non-zero left $U(\mathfrak{g},e)$-module $L$
we have that $\VA'(I(L)) \supseteq \overline{G \cdot e}$.
Moreover, if
$L$ is finite dimensional then $\VA'(I(L)) = \overline{G \cdot e}$.
\end{Theorem}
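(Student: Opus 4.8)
The plan is to read off both inclusions from the structure of the Skryabin bimodule $Q_\chi:=U(\mathfrak g)/U(\mathfrak g)\mathfrak m_\chi$, for which $S_\chi(L)=Q_\chi\otimes_{U(\mathfrak g,e)}L$, under the Kazhdan filtration on $U(\mathfrak g)$ (putting $x\in\mathfrak g(d)$ in degree $d+2$; as the grading is even, $\mathfrak m_\chi$ lies in degree $0$, so the induced filtration on $Q_\chi$ is non-negative and $\gr Q_\chi$ is a graded $\C[\mathfrak g^*]$-module). I identify $\mathfrak g^*$ with $\mathfrak g$ via the trace form, so that $\chi\leftrightarrow e$. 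The two geometric inputs I would invoke, due to Kostant, Lynch, Premet and Gan--Ginzburg, are: (i) $\gr Q_\chi\cong\C[\chi+\mathfrak m^\perp]$ as a $\C[\mathfrak g^*]$-module, and it is free over $\gr U(\mathfrak g,e)\cong\C[\mathcal S]$, where $\mathcal S$ is the Slodowy slice through $e$, an affine space on which the induced $\C^*$-action contracts to the fixed point $e$ (so $\C[\mathcal S]$ is non-negatively graded with degree-zero part $\C$); and (ii) the twisted adjoint action gives an isomorphism $\mathbb M\times\mathcal S\xrightarrow{\sim}\chi+\mathfrak m^\perp$, where $\mathbb M\subseteq G$ is the unipotent subgroup with $\operatorname{Lie}\mathbb M=\mathfrak m$, under which the projection $\chi+\mathfrak m^\perp\to\mathcal S$ matches the $\C[\mathcal S]$-module structure on $\gr Q_\chi$; in particular the $\C^*$-action on $\chi+\mathfrak m^\perp$ contracts to the unique fixed point $\chi$.

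For the first assertion I reduce to $L$ irreducible: for nonzero $L$, pick a cyclic submodule $L_0\subseteq L$ and an irreducible quotient $L_1$ of $L_0$; as $S_\chi$ is an exact equivalence, $S_\chi(L_1)$ is a quotient of a submodule of $S_\chi(L)$, whence $I(L)\subseteq I(L_0)\subseteq I(L_1)$ and so $\VA'(I(L))\supseteq\VA'(I(L_1))$. Assume then $L$ is irreducible, so $M:=S_\chi(L)$ is an irreducible $U(\mathfrak g)$-module, cyclic, hence a quotient of $Q_\chi$, with $I(L)=\ann_{U(\mathfrak g)}M$. Filter $M$ by the image of the Kazhdan filtration on $Q_\chi$; then $\gr M$ is a cyclic quotient of $\gr Q_\chi\cong\C[\chi+\mathfrak m^\perp]$, so this is a good filtration, $\VA(M)=\operatorname{Supp}\gr M$, and $\VA(M)$ is a nonempty ($M\ne 0$) closed $\C^*$-stable subset of $\chi+\mathfrak m^\perp$, hence contains the unique fixed point $\chi$. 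Since $\gr(\ann_{U(\mathfrak g)}M)\subseteq\ann_{\C[\mathfrak g^*]}\gr M$, we get $\chi\in\VA(M)\subseteq\VA(I(L))$; as $\VA(I(L))$ is closed and $G$-stable (annihilators being $\operatorname{ad}\mathfrak g$-stable) it contains $\overline{G\cdot\chi}$, which under the trace form corresponds to $\overline{G\cdot e}\subseteq\mathfrak{sl}_N$. Hence $\overline{G\cdot e}\subseteq\VA'(I(L))$.

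For the equality when $L$ is finite dimensional it suffices, given the first part, to prove $\VA'(I(L))\subseteq\overline{G\cdot e}$. Here $M=Q_\chi\otimes_{U(\mathfrak g,e)}L$ has finite length over $U(\mathfrak g)$ (as $L$ does over $U(\mathfrak g,e)$). Equip $L$ with a good filtration; since $L$ is finite dimensional, $\gr L$ is a finite dimensional graded $\C[\mathcal S]$-module, and as $\C[\mathcal S]$ is non-negatively graded with degree-zero part $\C$, it is supported only at $e\in\mathcal S$. Using freeness of $\gr Q_\chi$ over $\C[\mathcal S]$, the tensor filtration gives $\gr M\cong\gr Q_\chi\otimes_{\C[\mathcal S]}\gr L$, which as a $\C[\mathfrak g^*]$-module is therefore supported inside the fibre over $e$ of the projection $\chi+\mathfrak m^\perp\to\mathcal S$; by (ii) that fibre is the single orbit $\mathbb M\cdot\chi\subseteq G\cdot\chi$. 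So $\VA(M)\subseteq\overline{G\cdot\chi}$, and since $M$ has finite length, $\VA(I(L))=\VA(\ann_{U(\mathfrak g)}M)\subseteq\overline{G\cdot\VA(M)}\subseteq\overline{G\cdot\chi}$ by the standard comparison of the associated varieties of a finite-length module and its annihilator (cf.\ \cite{Ja}); with the first part this yields $\VA'(I(L))=\overline{G\cdot e}$. One could also finish by invoking Joseph's irreducibility theorem (which makes $\VA'(I(L))$ an orbit closure) together with the dimension bound $\dim\VA(M)\le\dim\mathbb M=\tfrac12\dim\overline{G\cdot e}$. The real work, black-boxed above, is establishing (i) and (ii): the identification of $\gr Q_\chi$ with the coordinate ring of $\chi+\mathfrak m^\perp$, its freeness over $\C[\mathcal S]$, and the Gan--Ginzburg transverse slice isomorphism; these rest on a careful analysis of the Kazhdan filtration via the PBW theorem, and are the heart of the matter.
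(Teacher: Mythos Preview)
The paper does not give its own proof of this statement; it is quoted from \cite[Theorem~3.1]{Pslodowy}, with only the remark that ``Premet's proof of this result also uses Joseph's irreducibility theorem.'' Your sketch is essentially Premet's argument and is in good shape, with one caveat below.

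Your argument for the inclusion $\overline{G\cdot e}\subseteq\VA'(I(L))$ is the standard one and is fine: reduce to irreducible $L$, realize $S_\chi(L)$ as a cyclic quotient of $Q_\chi$, take the Kazhdan filtration, and use that the contracting $\C^*$-action on $\chi+\mathfrak m^\perp$ forces $\chi$ to lie in the support. One point you are using implicitly is that for a two-sided (hence $\operatorname{ad}h$-stable) ideal the associated variety computed via the Kazhdan filtration agrees with that from the standard PBW filtration; this is routine once one observes that on each $\operatorname{ad}h$-weight space the two filtrations differ only by a shift of index.

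For the reverse inclusion when $\dim L<\infty$, your identification $\gr M\cong\gr Q_\chi\otimes_{\C[\mathcal S]}\gr L$ via freeness of $Q_\chi$ over $U(\mathfrak g,e)$, and the resulting bound $\VA(M)\subseteq\mathbb M\cdot\chi$ so that $\dim\VA(M)\le\dim\mathfrak m=\tfrac12\dim G\cdot e$, are correct. The step I would flag is the bare assertion ``$\VA(\ann_{U(\mathfrak g)}M)\subseteq\overline{G\cdot\VA(M)}$ by the standard comparison \dots (cf.\ \cite{Ja}).'' The elementary containment goes the other way (from $\gr(\ann M)\subseteq\ann(\gr M)$ one gets $\VA(M)\subseteq\VA(\ann M)$), and the inclusion you state is not, as far as I know, available in \cite{Ja} for an arbitrary finite-length Whittaker module. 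What \emph{is} standard, and suffices, is the dimension relation: for finite-length $M$ one has $\dim\VA(\ann M)=2\dim\VA(M)$, obtained by reducing to the simple composition factors and invoking the Joseph--Gabber result. Combining $\dim\VA'(I(L))\le\dim G\cdot e$ with the first part and Joseph's irreducibility theorem (so that $\VA'$ of each primitive ideal occurring is a single orbit closure) forces $\VA'(I(L))=\overline{G\cdot e}$. This is precisely your ``alternative finish,'' and it is the route Premet actually takes, in line with the paper's remark. So I would promote that alternative to the main argument and drop the unproved set-theoretic inclusion.
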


Recalling Theorem~\ref{fdc},
this gives us an ideal $I(L(A,e)) \in \lPrim$
for each column-strict $\pi$-tableau $A$.
The next theorem explains how to identify this primitive ideal in 
the Duflo labelling from the introduction.
It is a special case of a general result of Losev \cite[Theorem 5.1.1]{L1D}
(a closely related statement was conjectured in \cite[$\S$5.1]{BGK}).

\begin{Theorem}\label{labels}
For any $\pi$-tableau $A$, we have that
$$
I(L(A,e)) = I(\rho(A)),
$$
where $\rho(A) \in \mathfrak{t}^*$ is defined by (\ref{rhoAdef}).
\end{Theorem}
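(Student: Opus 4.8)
Theorem \ref{labels} asserts that $I(L(A,e)) = I(\rho(A))$ for any $\pi$-tableau $A$. Since the statement is really about the primitive ideal, and $L(A,e)$ is the unique irreducible quotient of the Verma module $M(A,e)$, the plan is to reduce the problem to the column-connected case (where everything can be computed by hand) and then interpolate, invoking compatibility of the constructions with the various functors at hand.

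\emph{First step: reduce to the case $A$ column-connected.} I would appeal to Losev's result \cite[Theorem 5.1.1]{L1D}, which is the general statement of which this theorem is the Cartan type $A$ specialization. The issue Brundan flags in the statement is purely bookkeeping: Losev's theorem is phrased in terms of his own normalization of Verma modules and his own labelling of $\Prim$, so one must match dictionaries. Concretely I would (i) identify the Verma modules $M(A,e)$ defined here via the generators-and-relations presentation of \cite{BKshifted} with the Verma modules for $U(\mathfrak g,e)$ defined intrinsically in \cite{BGK, LcatO}; this is exactly the point where the recent work of Brown and Goodwin \cite{BrG} is needed. Then (ii) translate Losev's highest-weight parametrization into the $\pi$-tableau language, checking that the weight attached to a highest weight vector of type $A$ corresponds under Losev's construction to the $\mathfrak t^*$-weight $\rho(A)$ given by the row-reading map \eqref{rhoAdef}. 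The appearance of $\rho(A)$ rather than $\gamma(A)$ is the crucial subtlety: $\rho$ involves first straightening $A$ to a row-standard tableau $A'$ and then reading along rows, which is precisely what matches the way the highest weight of the induced $\mathfrak p$-module $V(A)$ (or more generally the composition series of $M(A,e)$) unwinds when one applies the Skryabin functor and passes to the associated $\mathfrak g$-module.

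\emph{Alternative/independent argument via the one-dimensional case.} If one wants a self-contained proof rather than a citation of \cite{L1D}, I would proceed as follows. For $A$ column-connected the standard module $V(A,e) = V(A)$ is one-dimensional (Theorem \ref{class}), so $L(A,e) \cong V(A)$ and $S_\chi(L(A,e)) \cong U(\mathfrak g)\otimes_{U(\mathfrak p)} V(A)$, a module induced from a one-dimensional $\mathfrak p$-representation. One then computes directly that the $\mathfrak b$-highest weight occurring in this induced module is $\gamma(A) - \rho = \rho(A) - \rho$ (using column-connectedness, the column-reading and row-reading of $A$ agree), so its annihilator is $I(\rho(A))$ by Duflo's construction; this already handles all column-connected $A$. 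To pass to an arbitrary column-strict $A$, one uses the exactness of $S_\chi$ together with the classification of finite-dimensional irreducibles (Theorem \ref{fdc}) and the decomposition-number conjecture \eqref{conj} (now a theorem of Losev), comparing $[M(A,e):L(B,e)]$ with $[M(\rho(A)):L(\rho(B))]$: both sides of \eqref{conj} being equal forces the labelling $L(A,e)\mapsto L(\rho(A))$ to be the only one consistent with $S_\chi$ sending $M(A,e)$ to a module with Jordan–Hölder factors $\{L(\rho(B))\}$ of the predicted multiplicities. Finally, for a general $\pi$-tableau $A$ that is not row-equivalent to a column-strict one, $L(A,e)$ is infinite-dimensional, but the same functorial comparison of composition series still pins down $I(L(A,e)) = I(\rho(A))$ because $I(\rho(A))$ depends only on the row-equivalence class of $A$ via $\rho(A)$, which is visibly preserved under the reductions.

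\emph{Main obstacle.} The genuinely hard point is the dictionary-matching in the first step: verifying that the Verma modules $M(A,e)$ built from the shifted-Yangian presentation coincide with Losev's category-$\mathcal O$ Verma modules, and that the highest weights line up with $\rho(A)$ under all the normalizations (the shift by $\beta$ in \eqref{betadef}, the $\rho$-shift in \eqref{rhodef}, and the row-straightening in \eqref{rhoAdef} all have to be reconciled simultaneously). Everything else is either a direct computation in the one-dimensional case or a formal consequence of exactness of $S_\chi$ and the known decomposition numbers; but the identification of the two notions of Verma module genuinely requires importing \cite{BrG}, which is why Brundan singles it out in the statement.
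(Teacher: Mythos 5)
Your first step is essentially the paper's proof, and it is the right plan: cite \cite[Theorem 5.1.1]{L1D}, use \cite{BrG} to reconcile the two notions of Verma module, and then match up the highest weight data. What you underestimate is how much honest bookkeeping the ``dictionary matching'' requires. In the paper this involves (a) exploiting the fact that $e$ is regular in $\mathfrak{g}_0$, so that $U(\mathfrak{g}_0,e)\cong Z(\mathfrak{g}_0)$ via Kostant, (b) chasing the Harish-Chandra isomorphism (\ref{HC}) to pin down which $1$-dimensional $U(\mathfrak{g}_0,e)$-module $\Lambda$ corresponds to $\C_{\rho'(A)}$, (c) observing that $\rho'(A)$ is \emph{anti-dominant} for $\mathfrak{g}_0$, so that $L_0'(\rho'(A))$ is an irreducible Verma module whose annihilator is the minimal primitive ideal with the given central character, and (d) using \cite[Theorem 3.9]{K} to identify that minimal primitive ideal with the one coming from $\Lambda$ through the $\mathfrak{g}_0$-level Skryabin equivalence. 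These are not cosmetic normalizations to be reconciled; without step (c) in particular Losev's theorem does not give you the \emph{right} primitive ideal, and $\rho(A)$ is precisely designed so that (c) holds.

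Your ``alternative/independent argument'' has a genuine gap. You propose to use the decomposition-number equality (\ref{conj}) together with exactness of $S_\chi$ to ``force the labelling.'' This does not work, for two reasons. First, equality of composition multiplicities $[M(A,e):L(B,e)]=[M(\rho(A)):L(\rho(B))]$ is a numerical statement; it does not determine annihilators. Knowing that two decomposition matrices coincide tells you nothing about the two-sided ideals $\ann_{U(\mathfrak{g})}S_\chi(L(B,e))$ versus $\ann_{U(\mathfrak{g})}L(\rho(B))$, since $S_\chi(L(B,e))$ is a Whittaker module, not a highest weight module, and no isomorphism is being produced. (Note also that establishing (\ref{conj}) itself already requires the full Brown--Goodwin identification, so this route is not actually avoiding the hard part.) Second, Theorem~\ref{labels} is stated for \emph{arbitrary} $\pi$-tableaux $A$, not just those row-equivalent to column-strict ones. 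When $A$ is not row-equivalent to a column-strict tableau, $L(A,e)$ is infinite dimensional, $M(A,e)$ need not have a finite composition series, and the decomposition-number machinery you are invoking simply does not apply; yet this case is covered uniformly by the paper's argument because Losev's theorem makes no finiteness hypothesis. Your interpolation from the column-connected case to the general case is therefore not a proof. Finally, the claim $S_\chi(L(A,e))\cong U(\mathfrak{g})\otimes_{U(\mathfrak{p})}V(A)$ for column-connected $A$ is also not right as a module isomorphism: the correct statement (Lemma~\ref{mainid} in the paper) is an equality of \emph{annihilators}, $\ann_{U(\mathfrak{g})}(V(A)^*\otimes_{U(\mathfrak{p})}U(\mathfrak{g}))=I(V(A,e))$, obtained via the $\#$-duality and a twist by an anti-automorphism, and even then it proves Theorem~\ref{msup} rather than Theorem~\ref{labels}.
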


\begin{proof}
Recall we have labelled the boxes of $\pi$ in order down columns starting from the leftmost column. 
Let $1',2',\dots,N'$ be the sequence of integers obtained by reading
these labels 
from left to right along rows starting from the top row.
There is a unique permutation $w \in W$ such that
$w(i) = i'$ for each $i=1,\dots,N$.
Let $\mathfrak{b}' := w\cdot\mathfrak{b} = \langle e_{i',j'}\:|\:1 \leq i \leq j \leq N\rangle$
and $\rho' := w\rho = -\sum_{i=1}^N i \eps_{i'}$.
For any $\alpha' \in \mathfrak{t}^*$, 
let $L'(\alpha')$ be the irreducible $\mathfrak{g}$-module generated
by a $\mathfrak{b}'$-highest weight vector of weight $\alpha' - \rho'$.
Now take a $\pi$-tableau $A$ and
let $\rho'(A) := w \rho(A)$.
An easy argument involving twisting the action by $w$ 
shows that
$\ann_{U(\mathfrak{g})} L'(\rho'(A)) = \ann_{U(\mathfrak{g})} L(\rho(A))
\stackrel{\text{def}}{=} I(\rho(\alpha))$.
Thus to complete the proof of the theorem it suffices to show that
\begin{equation}\label{prob}
I(L(A,e)) \stackrel{\text{def}}{=} \ann_{U(\mathfrak{g})} S_\chi(L(A,e))
= 
\ann_{U(\mathfrak{g})} L'(\rho'(A)).
\end{equation}
We will ultimately deduce this from \cite[Theorem 5.1.1]{L1D},
which is in phrased in terms of 
\cite{BGK} highest weight theory.

To recall a little of this theory,
for $\mathfrak{a} \in \{\mathfrak{g}, \mathfrak{p},
\mathfrak{h}, \mathfrak{m}, \mathfrak{b}, \mathfrak{b}'\}$,
let $\mathfrak{a}_0$
be the zero weight space of $\mathfrak{a}$ 
for the adjoint action of the torus $\mathfrak{t}^e$.
In particular, we have that
$\mathfrak{g}_0 = \langle e_{i,j}\:|\:\row(i) = \row(j)\rangle 
\cong \mathfrak{gl}_{p_1}(\C)
\oplus\cdots\oplus \mathfrak{gl}_{p_n}(\C)$,
while $\mathfrak{p}_0 = \mathfrak{b}_0 = \mathfrak{b}'_0$ and $\mathfrak{h}_0 = \mathfrak{t}$.
We have in front of us the necessary data to define 
another finite $W$-algebra
$U(\mathfrak{g}_0, e) \subseteq U(\mathfrak{p}_0)$,
which plays the role of ``Cartan subalgebra.''
Choose a parabolic subalgebra $\mathfrak{q}$ of $\mathfrak{g}$
with Levi factor $\mathfrak{g}_0$ by setting 
$\mathfrak{q} := \mathfrak{g}_0 + \mathfrak{b}'
= \langle e_{i,j}\:|\:\row(i) \leq \row(j)\rangle$.
This choice determines a certain
$(U(\mathfrak{g},e),U(\mathfrak{g}_0,e))$-bimodule
denoted 
$U(\mathfrak{g},e) / U(\mathfrak{g},e)_\sharp$
in \cite[$\S$4.1]{BGK}; the right $U(\mathfrak{g}_0,e)$-module
structure here is defined 
using a homomorphism defined in \cite[Theorem 4.3]{BGK}.
Then given any finite dimensional irreducible 
left $U(\mathfrak{g}_0,e)$-module
$\La$ we can form the Verma module
\begin{equation}\label{ve}
M(\La,e) := U(\mathfrak{g},e) / U(\mathfrak{g},e)_\sharp
\otimes_{U(\mathfrak{g}_0,e)} \La
\end{equation}
as in \cite[$\S$5.2]{BGK}.
As usual, it has a unique irreducible quotient denoted $L(\La,e)$;
see \cite[Theorem 4.5(4)]{BGK}.
On the other hand, in \cite[$\S$4.3]{L1D},
Losev makes a very similar construction of Verma modules, 
but replaces the homomorphism from \cite[Theorem 4.3]{BGK}
with a map constructed in a completely different way
in \cite[(5.6)]{LcatO}.
It is far from clear that Losev's map is
the same as the one in \cite{BGK}, but fortunately 
this has recently been
checked by Brown and Goodwin (in standard Levi type); see \cite[Proposition 3.12]{BrG}.
Hence, as noted in \cite[$\S$3.5]{BrG}, the Verma modules
constructed in \cite{L1D} are the same as the Verma modules $M(\La,e)$
above coming from
\cite{BGK}. This is a crucial point. 

As we are in standard Levi type, i.e.
$e$ is regular in $\mathfrak{g}_0$, we have simply that
$U(\mathfrak{g}_0,e) \cong Z(\mathfrak{g}_0)$, the center of 
$U(\mathfrak{g}_0)$, as goes back to Kostant \cite[$\S$2]{K}.
More precisely, there is a canonical algebra isomorphism 
\begin{equation}\label{ko}
\operatorname{Pr}_0:Z(\mathfrak{g}_0)
\stackrel{\sim}{\rightarrow}
U(\mathfrak{g}_0,e)
\end{equation}
induced by the unique linear projection 
$\Pr_0:U(\mathfrak{g}_0) \twoheadrightarrow U(\mathfrak{b}_0)$
that sends $u (x - \chi(x))$ to zero for each $u \in U(\mathfrak{g}_0)$
and $x \in \mathfrak{m}_0$.
For $\alpha' \in \mathfrak{t}^*$, 
let $L_0'(\alpha')$ denote the irreducible $U(\mathfrak{g}_0)$-module generated
by a $\mathfrak{b}_0'$-highest weight vector of weight $\alpha' - \rho'$.
Let $W_0$ be the subgroup of $W$ consisting of all permutations
such that $\row(i) = \row(w(i))$ for each $1 \leq i \leq N$, which
is the Weyl group of $\mathfrak{g}_0$.
Then we have the Harish-Chandra isomorphism
\begin{equation}\label{HC}
\Psi_0:Z(\mathfrak{g}_0) \stackrel{\sim}{\rightarrow}
S(\mathfrak{t})^{W_0},
\end{equation}
which we normalize so that
$z\in Z(\mathfrak{g}_0)$ acts on $L_0'(\alpha')$
by the scalar $\alpha'(\Psi_0(z))$ for each $\alpha'\in\mathfrak{t}^*$.
Let $\La$ 
be the one dimensional left $U(\mathfrak{g}_0,e)$-module
corresponding under the isomorphisms (\ref{ko}) and (\ref{HC})
to the $S(\mathfrak{t})^{W_0}$-module
$\C_{\rho'(A)}$ of weight $\rho'(A)$.
By the proof of \cite[Theorem 5.5]{BGK}
and \cite[Lemma 5.1]{BGK}, 
we have that
$M(\La,e) \cong M(A,e)$
as left $U(\mathfrak{g},e)$-modules, hence $L(\La,e) \cong L(A,e)$.
So we have identified
$L(A,e)$ with a highest weight module exactly 
as in \cite{L1D}, and our problem (\ref{prob}) 
now becomes to show that
\begin{equation}\label{prob2}
\ann_{U(\mathfrak{g})} S_\chi(L(\Lambda,e))
= 
\ann_{U(\mathfrak{g})} L'(\rho'(A)).
\end{equation}

By the definition of $\La$ and (\ref{HC}), the character of $Z(\mathfrak{g}_0)$
arising from $\Lambda$ via (\ref{ko})
is the same as the central character of $L'_0(\rho'(A))$.
Moreover, by the definition of $\rho'(A)$,
 $L'_0(\rho'(A))$ is an ``anti-dominant'' irreducible 
Verma module, so by \cite[Theorem 8.4.3]{Dix} 
its annihilator in $U(\mathfrak{g}_0)$
is the minimal primitive ideal
generated by the kernel of this central character.
By \cite[Theorem 3.9]{K}, this minimal primitive ideal
is also the annihilator of the
$U(\mathfrak{g}_0)$-module obtained from $\La$ by applying 
the $\mathfrak{g}_0$-version of Skryabin's equivalence.
Now apply \cite[Theorem 5.1.1]{L1D} to deduce (\ref{prob2}).
\end{proof}

Theorem~\ref{labels} has a number of important consequences.
Recalling the definition of the left-justified tableau
$Q(\alpha)$ from the introduction,
let
\begin{equation}
\mathfrak{t}^*_\lambda := \{\alpha \in\mathfrak{t}^*\:|\:
\text{$Q(\alpha)$ has shape $\lambda$}\}.
\end{equation}
For $\alpha \in \mathfrak{t}^*_\lambda$, we define
a $\pi$-tableau
$Q_\pi(\alpha)$ by taking $Q(\alpha)$ 
and sliding the boxes to the right as necessary in order to 
convert it to a $\pi$-tableau. Note $Q_\pi(\alpha)$ is row-equivalent to
a column-strict $\pi$-tableau.

\begin{Lemma}\label{tr}
For any column-strict $\pi$-tableau $A$, 
we have that
$\rho(A)\in\mathfrak{t}^*_\lambda$
and $A \sim Q_\pi(\rho(A))$.
\end{Lemma}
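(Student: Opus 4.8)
The plan is to establish the two assertions of Lemma~\ref{tr} separately, both by unwinding the combinatorial definitions of $\rho(A)$, $\gamma$, and Schensted insertion, and by exploiting the key identity (\ref{conj}) (equivalently Losev's theorem) only indirectly if at all; in fact the cleaner route is purely combinatorial. First I would analyze the map $A \mapsto A'$ that converts a column-strict $\pi$-tableau into its row-standard reordering. Since $A$ is column-strict, within each row the entries that can be reordered are exactly those that will be sorted into non-decreasing order with respect to $\not>$; after sliding boxes left to get a left-justified shape, I would argue that the left-justified tableau built from the rows of $A'$ is precisely a semistandard-type tableau whose column reading produces the sequence $a_1',\dots,a_N'$. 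So the first claim, $\rho(A) \in \mathfrak{t}^*_\lambda$, amounts to checking that the recording tableau $Q(\rho(A))$ obtained by Schensted-inserting the entries of $\rho(A)$ (read along rows of $A'$, top row first) has shape $\lambda$, the shape of $\pi$.

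The main work is the second claim, $A \sim Q_\pi(\rho(A))$, which after sliding boxes left is equivalent to: if we left-justify $A$ (call it $\bar A$, a column-strict left-justified tableau of shape $\lambda$) and read its rows top-to-bottom to form the insertion word $w_{\mathrm{row}}(\bar A)$, then Schensted insertion of $w_{\mathrm{row}}(\bar A)$ recovers a tableau row-equivalent to $\bar A$. This is essentially the statement that a semistandard tableau is recovered (up to row permutation, since our order $\geq$ on $\C$ is only partial and ties within rows are immaterial) from its row-reading word under RSK. For genuinely semistandard tableaux over a totally ordered alphabet this is the classical fact that $P(w_{\mathrm{row}}(T)) = T$; here one must be careful that $A$ is column-\emph{strict} but only row-standard after reordering, and that incomparable complex numbers may occur. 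The strategy is to reduce to the totally ordered case: the relevant comparisons during insertion only ever involve entries in a single column of $A$ (which are strictly $\geq$-increasing, hence pairwise comparable) and entries being bumped from a row; I would verify by a bumping-path argument, inducting on the number of rows of $\bar A$ read in, that after inserting the first $k$ rows the current tableau equals (up to row-equivalence) the bottom $k$ rows of $\bar A$, with the $(k{+}1)$st row's insertion simply appending a new top row.

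More precisely, the induction step says: given a column-strict tableau $T$ of $k$ rows and a row $r$ (the next row of $\bar A$, which must be $\not>$-below $T$'s top row entrywise in the column-connected/column-strict sense), inserting the entries of $r$ left-to-right into $T$ causes each inserted entry to bump the entry directly above it in its column up to the new $(k{+}1)$st row, because column-strictness of $\bar A$ forces every entry of $T$ in a given column to be $\geq$ the corresponding entry of $r$, while the row-ordering ensures the bumped element is always the leftmost strictly-greater one. Here the hypothesis that $A$ is column-strict (so $\bar A$ is too) is exactly what makes the bumping deterministic and column-preserving. I expect the main obstacle to be precisely this bumping-path bookkeeping in the presence of the partial order $\geq$ on $\C$: one needs to check that entries which are $\geq$-incomparable never interact during insertion (so that the column-strictness hypothesis, which only constrains comparable entries, suffices), and that the final tableau's ambiguity is confined to permutations within rows of equal-height columns, matching the ``up to row-equivalence'' in the statement. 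Once that is pinned down, combining it with the first paragraph's description of $A'$ gives both $\rho(A) \in \mathfrak{t}^*_\lambda$ and $A \sim Q_\pi(\rho(A))$ simultaneously.
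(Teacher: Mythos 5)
Your strategy---read $\rho(A)$ off as the row word of the row-standardization $A'$ and recover the shape and row multisets of $Q(\rho(A))$ via the classical row-bumping argument---is the natural one and matches what the paper intends by ``follows easily from the algorithm.'' It works verbatim when all entries of $A$ lie in a single $\Z$-coset, or when $\pi$ is left-justified. However, there is a genuine gap in the asserted intermediate fact that ``the left-justified tableau built from the rows of $A'$ is precisely a semistandard-type tableau'' recovered exactly by the insertion: this fails for a general pyramid with columns in different $\Z$-cosets. Take $\lambda=(3,2)$, the pyramid $\pi$ with its short row sitting over columns $2,3$, and the column-strict $\pi$-tableau $A$ with bottom row $(2,1,0.5)$ and top row $(3,1.5)$. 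Then $A'$ has rows $(1,2,0.5)$ and $(3,1.5)$, so $\rho(A)=(3,1.5,1,2,0.5)$; the left-justification of $A'$ is $\begin{smallmatrix}3&1.5&\\1&2&0.5\end{smallmatrix}$, whose second column $(2,1.5)$ is $\geq$-incomparable (not column-strict), and Schensted insertion of $\rho(A)$ in fact produces the \emph{different} tableau $\begin{smallmatrix}3&1.5&\\1&0.5&2\end{smallmatrix}$. The lemma still holds---this output has shape $\lambda$ and the same row multisets as $A$---but the deterministic ``each entry bumps the one directly above it in its column'' picture you describe breaks down.

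Your instinct that incomparable entries ``never interact'' is the right one, and it is what repairs the argument: because $A$ is column-strict, each column of $A$ lies entirely in one $\Z$-coset, and a Schensted bump only ever moves an entry onto an entry of the same coset. So you should run your bumping argument separately on the sub-word of $\rho(A)$ in each coset; there the relevant single-coset left-justified sorted tableau genuinely is semistandard (your counting argument applies, now with a total order) and the classical fact gives the per-coset row multisets and shape of $Q(\rho(A))$. Assembling over cosets gives both $\rho(A)\in\mathfrak{t}^*_\lambda$ and $A\sim Q_\pi(\rho(A))$. Two smaller points to correct: $\rho(A)$ is read from the row-standardization $A'$, not from $\bar A$ as you write in places, and it is a row reading rather than a ``column reading'' as in your first paragraph.
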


\begin{proof}
This follows easily from the algorithm
to compute $Q(\rho(A))$.
\end{proof}

\begin{Theorem}\label{labels2}
For $\alpha \in \mathfrak{t}_\lambda^*$ we have that
$I(\alpha) = I(L(A,e))$, where $A$ is
any column-strict $\pi$-tableau
with $A \sim Q_\pi(\alpha)$.
\end{Theorem}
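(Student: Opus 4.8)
The plan is to deduce Theorem~\ref{labels2} by stitching together Theorem~\ref{labels} and Lemma~\ref{tr}, the only subtlety being that Theorem~\ref{labels} identifies $I(L(A,e))$ with $I(\rho(A))$ for a \emph{fixed} tableau $A$, whereas here we are handed an arbitrary $\alpha\in\mathfrak{t}^*_\lambda$ and must produce a suitable column-strict $A$ and check that the resulting ideal does not depend on the choice. First I would observe that for $\alpha\in\mathfrak{t}^*_\lambda$ the $\pi$-tableau $Q_\pi(\alpha)$ is, by construction, row-equivalent to a column-strict $\pi$-tableau; choose any such column-strict representative $A$, so that $A\sim Q_\pi(\alpha)$. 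This is exactly the hypothesis under which $L(A,e)$ is defined and finite dimensional by Theorem~\ref{fdc}, and under which Theorem~\ref{labels} applies.

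Next I would apply Lemma~\ref{tr} to this $A$: since $A$ is column-strict we get $\rho(A)\in\mathfrak{t}^*_\lambda$ and $A\sim Q_\pi(\rho(A))$. Combining with $A\sim Q_\pi(\alpha)$ gives $Q_\pi(\alpha)\sim Q_\pi(\rho(A))$, and sliding boxes back to the left-justified position (an operation that respects row-equivalence) yields $Q(\alpha)\sim Q(\rho(A))$. By Joseph's fundamental result (\ref{josephs}) this forces $I(\alpha)=I(\rho(A))$. On the other hand, Theorem~\ref{labels} gives $I(L(A,e))=I(\rho(A))$. Chaining these two equalities produces $I(\alpha)=I(L(A,e))$, which is exactly the assertion.

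Finally I would note that the statement claims this for \emph{any} column-strict $\pi$-tableau $A$ with $A\sim Q_\pi(\alpha)$, so I should remark that the argument above used only that weakened hypothesis: two such choices $A,A'$ are both row-equivalent to $Q_\pi(\alpha)$, hence $A\sim A'$, hence $L(A,e)\cong L(A',e)$ by the classification recalled before Theorem~\ref{fdc} (equivalently by Theorem~\ref{fdc} itself), so $I(L(A,e))=I(L(A',e))$ and there is no ambiguity. Strictly speaking one does not even need this independence remark for the proof, since the displayed chain of equalities holds for every admissible $A$ simultaneously; but it is worth stating for clarity.

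I do not expect any real obstacle here: the theorem is essentially a bookkeeping corollary, and the only thing to be careful about is the translation between $\pi$-tableaux and left-justified tableaux under row-equivalence—specifically that the box-sliding map $Q\leftrightarrow Q_\pi$ intertwines the two notions of row-equivalence, which is immediate from the definitions since sliding boxes within their rows does not change which entries lie in which row. With that observation in hand, the proof is a two-line application of (\ref{josephs}), Lemma~\ref{tr}, and Theorem~\ref{labels}.
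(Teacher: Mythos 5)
Your proof is correct and follows essentially the same route as the paper's: apply Lemma~\ref{tr} to get $Q_\pi(\rho(A))\sim A\sim Q_\pi(\alpha)$, hence $Q(\rho(A))\sim Q(\alpha)$, use (\ref{josephs}) to conclude $I(\rho(A))=I(\alpha)$, and combine with Theorem~\ref{labels}. The extra remarks about independence of the choice of $A$ and about box-sliding respecting row-equivalence are harmless elaborations of points the paper leaves implicit.
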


\begin{proof}
Lemma~\ref{tr} implies that
$Q_\pi(\rho(A)) \sim A \sim Q_\pi(\alpha)$.
Hence $Q(\rho(A)) \sim Q(\alpha)$, and
we get that $I(\rho(A)) = I(\alpha)$ 
by (\ref{josephs}).
Also by Theorem~\ref{labels} we have that
$I(L(A,e)) = I(\rho(A))$.
Hence $I(\alpha) = I(L(A,e))$.
\end{proof}

The next two corollaries are certainly not new, but still we have
included self-contained proofs in order to illustrate the usefulness of
Theorems~\ref{labels} and \ref{labels2}.
The first
recovers fully the result of
Joseph from \cite[$\S$3.3]{JI}.

\begin{Corollary}[Joseph]\label{jcor}
$\lPrim = \{I(\alpha)\:|\:\alpha \in \mathfrak{t}^*_\lambda\}$.
\end{Corollary}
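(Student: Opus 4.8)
The plan is to prove the two inclusions $\lPrim \supseteq \{I(\alpha)\:|\:\alpha \in \mathfrak{t}^*_\lambda\}$ and $\lPrim \subseteq \{I(\alpha)\:|\:\alpha \in \mathfrak{t}^*_\lambda\}$ separately, leaning almost entirely on Theorem~\ref{labels2} together with Premet's Theorem~\ref{t31} and the finite-dimensionality classification in Theorem~\ref{fdc}. First I would establish the easy inclusion: take $\alpha \in \mathfrak{t}^*_\lambda$, so by definition $Q(\alpha)$ has shape $\lambda$, and let $A$ be a column-strict $\pi$-tableau with $A \sim Q_\pi(\alpha)$ (such an $A$ exists since $Q_\pi(\alpha)$ is row-equivalent to a column-strict $\pi$-tableau, as noted just before Lemma~\ref{tr}). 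By Theorem~\ref{labels2}, $I(\alpha) = I(L(A,e))$. Since $A$ is column-strict, Theorem~\ref{fdc} tells us $L(A,e)$ is finite dimensional and irreducible, and then the ``moreover'' clause of Theorem~\ref{t31} gives $\VA'(I(L(A,e))) = \overline{G \cdot e}$. Hence $I(\alpha) \in \lPrim$ by the definition of $\lPrim$.

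For the reverse inclusion, I would start from an arbitrary $I \in \lPrim$. By Duflo's theorem (surjectivity of $\alpha \mapsto I(\alpha)$ recalled in the introduction), write $I = I(\alpha)$ for some $\alpha \in \mathfrak{t}^*$. The tableau $Q(\alpha)$ is always row-equivalent to a column-strict tableau, so it has some shape $\mu$, a partition of $N$; the goal is to show $\mu = \lambda$. The mechanism is again Theorem~\ref{labels2}, but now applied with $\mu$ in place of $\lambda$: by the general theory of $\S$\ref{s1d} there is an associated nilpotent $e_\mu$ of Jordan type $\mu$ and a $\pi_\mu$-tableau $A$, column-strict, with $A \sim Q_{\pi_\mu}(\alpha)$, so that $I(\alpha) = I(L(A,e_\mu))$. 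Then Theorem~\ref{t31} forces $\VA'(I(\alpha)) = \overline{G \cdot e_\mu}$. On the other hand $I(\alpha) = I \in \lPrim$ means $\VA'(I) = \overline{G \cdot e}$ with $e$ of Jordan type $\lambda$. Since the closures of distinct nilpotent orbits are distinct, $\overline{G \cdot e_\mu} = \overline{G \cdot e}$ forces $\mu = \lambda$, i.e.\ $Q(\alpha)$ has shape $\lambda$, i.e.\ $\alpha \in \mathfrak{t}^*_\lambda$. This gives $I = I(\alpha) \in \{I(\beta)\:|\:\beta \in \mathfrak{t}^*_\lambda\}$.

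The step I expect to be the only real subtlety is making sure the bookkeeping in the reverse inclusion is legitimate: one must be allowed to run the entire apparatus of $\S$\ref{s1d}--$\S$\ref{swhitt} with the partition $\mu$ attached to $Q(\alpha)$ rather than the fixed $\lambda$ of the section, and one needs that $Q(\alpha)$ genuinely has a well-defined shape --- but this is immediate since $Q(\alpha)$ is row-equivalent to a column-strict tableau and row-equivalence preserves shape. Once that is granted, the argument is just: apply Theorem~\ref{labels2} and Theorem~\ref{t31} to read off the associated variety, and invoke the fact (Joseph's irreducibility theorem, already cited) that $\VA'$ of a primitive ideal determines a single nilpotent orbit, so equality of the two closed orbit closures is equality of orbits hence of Jordan types. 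No calculation is required beyond assembling these cited results in the right order.
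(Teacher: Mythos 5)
Your proof is correct and takes essentially the same approach as the paper: both arguments combine Theorem~\ref{labels2} and Theorem~\ref{t31} with Duflo's surjectivity and Joseph's irreducibility theorem, the only difference being that you spell out explicitly the partition argument (running the machinery of \S2--\S3 with the shape $\mu$ of $Q(\alpha)$ in place of the fixed $\lambda$) that the paper compresses into the remark that $\Prim$ is the disjoint union of the $\operatorname{Prim}_\mu U(\mathfrak{g})$'s.
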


\begin{proof}
This follows from Theorem~\ref{labels2} and Theorem~\ref{t31},
since we know already by Duflo's theorem and Joseph's irreducibility theorem 
that $\Prim = \{I(\alpha)\:|\:\alpha \in \mathfrak{t}^*\}$ is the disjoint union of the
$\lPrim$'s for all $\la$.
\end{proof}

The next corollary 
is a special case of a 
result proved in arbitrary Cartan type by Losev;
see
\cite[Theorem 1.2.2(viii)]{Lsymplectic} for the surjectivity of the
map
in the statement of the corollary, and
Premet's conjecture formulated in \cite[Conjecture 1.2.1]{Lclass} and proved
in \cite[$\S$4.2]{Lclass} for the injectivity (which 
simplifies in Cartan type $A$ because centralizers are connected).

\begin{Corollary}[Losev]\label{lbij}
The map
\begin{align*}
\left\{
\begin{array}{l}
\text{isomorphism classes of}\\
\text{finite dimensional irreducible}\\
\text{left $U(\mathfrak{g},e)$-modules}
\end{array}
\right\} &\rightarrow \lPrim,
\qquad
[L] \mapsto I(L).
\end{align*}
is a bijection.
\end{Corollary}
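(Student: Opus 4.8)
The plan is to prove that the map $[L] \mapsto I(L)$ is a bijection from isomorphism classes of finite dimensional irreducible left $U(\mathfrak{g},e)$-modules onto $\lPrim$ by combining the results just established. First I would recall that by Theorem~\ref{fdc} a complete set of representatives for the isomorphism classes of finite dimensional irreducible $U(\mathfrak{g},e)$-modules is given by $\{L(A,e)\}$ as $A$ ranges over row-equivalence classes of column-strict $\pi$-tableaux, with $L(A,e) \cong L(B,e)$ if and only if $A \sim B$. So the domain of the map is naturally indexed by such tableaux.

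For surjectivity, I would argue as follows. By Theorem~\ref{t31}, for any column-strict $\pi$-tableau $A$ the ideal $I(L(A,e))$ lies in $\lPrim$, so the map does land in $\lPrim$. Conversely, take $I \in \lPrim$. By Corollary~\ref{jcor} (or directly from its proof), $I = I(\alpha)$ for some $\alpha \in \mathfrak{t}^*_\lambda$. Choosing a column-strict $\pi$-tableau $A$ with $A \sim Q_\pi(\alpha)$ (possible since $Q_\pi(\alpha)$ is row-equivalent to a column-strict tableau by construction), Theorem~\ref{labels2} gives $I = I(\alpha) = I(L(A,e))$. Hence every element of $\lPrim$ is in the image, and surjectivity follows.

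For injectivity, suppose $A$ and $B$ are column-strict $\pi$-tableaux with $I(L(A,e)) = I(L(B,e))$. By Theorem~\ref{labels} this says $I(\rho(A)) = I(\rho(B))$, i.e.\ $\ann_{U(\mathfrak{g})} L(\rho(A)) = \ann_{U(\mathfrak{g})} L(\rho(B))$. By Joseph's fundamental result (\ref{josephs}) this is equivalent to $Q(\rho(A)) \sim Q(\rho(B))$. But by Lemma~\ref{tr} we have $A \sim Q_\pi(\rho(A))$ and $B \sim Q_\pi(\rho(B))$, and $Q_\pi(\rho(A)) \sim Q_\pi(\rho(B))$ precisely when $Q(\rho(A)) \sim Q(\rho(B))$ since the passage between the left-justified tableau and the $\pi$-tableau only slides boxes within rows. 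Therefore $A \sim B$, so $L(A,e) \cong L(B,e)$, proving injectivity.

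The only genuinely delicate input is Theorem~\ref{labels} (equivalently Losev's \cite[Theorem 5.1.1]{L1D}), whose proof was already carried out above and rests on the identification of the \cite{BGK} Verma modules with those of \cite{L1D}; everything else in the present argument is a formal consequence of that identification together with (\ref{josephs}), Theorem~\ref{t31}, and the combinatorial bookkeeping of Lemma~\ref{tr}. So I do not expect any serious obstacle here: the proof is essentially an assembly of Corollary~\ref{jcor}, Theorem~\ref{labels2}, and the tableau combinatorics, and is correspondingly short.
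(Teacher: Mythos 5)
Your proposal is correct and follows essentially the same route as the paper's own proof: surjectivity via Corollary~\ref{jcor} and Theorem~\ref{labels2}, and injectivity via Theorem~\ref{fdc}, Theorem~\ref{labels}, (\ref{josephs}), and Lemma~\ref{tr}. The only (harmless) difference is that you make explicit the use of Theorem~\ref{t31} to see that the map lands in $\lPrim$, which the paper treats as already established in the surrounding discussion.
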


\begin{proof}
By Corollary~\ref{jcor}, 
any $I \in \lPrim$ can be represented as $I(\alpha)$
for some $\alpha \in \mathfrak{t}_\lambda^*$.
By Theorem \ref{labels2}, we 
see that $I(\alpha) = I(L)$ for some finite dimensional irreducible
left $U(\mathfrak{g},e)$-module,
hence the map is surjective.
For injectivity, by Theorem~\ref{fdc}, it suffices
to show that $I(L(A,e)) = I(L(B,e))$ implies $A \sim B$
for any column-strict $\pi$-tableaux $A$ and $B$. 
To prove this, use Theorem~\ref{labels} and (\ref{josephs})
to see that
$I(L(A,e))=I(L(B,e))$ implies 
$Q(\rho(A)) \sim Q(\rho(B))$, hence $A \sim B$ by Lemma~\ref{tr}.
\end{proof}

\begin{Remark}\label{brundans}\rm
Let $\operatorname{Prim} U(\mathfrak{g},e)$ denote the space of all
primitive ideals in $U(\mathfrak{g},e)$.
In \cite{Lsymplectic}, Losev shows that there is a well-defined map
\begin{equation*}
?^\dagger:
\operatorname{Prim} U(\mathfrak{g},e) \rightarrow 
\bigcup_{\mu \geq \la}
\operatorname{Prim}_\mu U(\mathfrak{g})
\end{equation*}
such that
$(\ann_{U(\mathfrak{g},e)} M)^\dagger
= I(M)$ for any
irreducible left $U(\mathfrak{g},e)$-module $M$; here $\geq$ is the 
usual dominance ordering on partitions.
Using Theorem~\ref{labels}, Corollary~\ref{jcor} and (\ref{josephs}), it is a purely
combinatorial
exercise to check that this map sends the subset
$$
\operatorname{Prim}_{hw} U(\mathfrak{g},e):= 
\{\ann_{U(\mathfrak{g},e)} L(A,e)\:|\:\text{ for all $\pi$-tableaux
  $A$}\} \subseteq \operatorname{Prim} U(\mathfrak{g},e)
$$
of {\em highest weight} primitive ideals
surjectively onto $\bigcup_{\mu \geq \la} \operatorname{Prim}_\mu
U(\mathfrak{g})$, hence Losev's map $?^\dagger$ is surjective.
We conjecture that it
is also injective (in Cartan type $A$). 
Combined with the preceeding observations,
this conjecture would imply
that $\operatorname{Prim} U(\mathfrak{g},e) = \operatorname{Prim}_{hw}
U(\mathfrak{g},e)$
and moreover
\begin{equation}
\ann_{U(\mathfrak{g},e)} L(A,e)
=
\ann_{U(\mathfrak{g},e)} L(B,e)
\quad\Leftrightarrow\quad
Q(\rho(A)) \sim Q(\rho(B)).
\end{equation}
This would give a classification 
of $\operatorname{Prim}U(\mathfrak{g},e)$
exactly in the spirit of the Duflo-Joseph classification
of $\Prim$ from (\ref{josephs}).
\end{Remark}

Now we turn our attention to deriving some basic properties of the coinvariant
Whittaker functor from (\ref{coinv}).
This functor has its origins in the work of Kostant and Lynch (see e.g. \cite[$\S$3.8]{K} and 
\cite[ch.4]{Ly}) 
though we give a self-contained treatment here.

\begin{Lemma}\label{fin}
The functor $H_0(\mathfrak{m}_\chi,?)$ sends right $U(\mathfrak{g})$-modules that
are finitely generated over $\mathfrak{m}$ to finite dimensional right
$U(\mathfrak{g},e)$-modules.
\end{Lemma}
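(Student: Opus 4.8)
The plan is to reduce the statement to a standard finiteness property of $U(\mathfrak{m})$ acting on the relevant algebras, exploiting the defining structure of $U(\mathfrak{g},e)$ as a subalgebra of $U(\mathfrak{p})$ together with the PBW-type decomposition of $U(\mathfrak{g})$ relative to $\mathfrak{m}$. Recall that by the Kazhdan filtration (or simply by the $\Z$-grading $\mathfrak{g}=\bigoplus_d\mathfrak{g}(d)$ and the Slodowy-slice picture), the multiplication map gives a vector-space isomorphism $U(\mathfrak{g})\cong U(\mathfrak{m})\otimes U(\mathfrak{g},e)$ up to the relevant filtration, and more relevantly $U(\mathfrak{g})/U(\mathfrak{g})\mathfrak{m}_\chi$ is a free right $U(\mathfrak{g},e)$-module of finite rank over each piece of the Kazhdan filtration; what we actually need here is the dual statement that $U(\mathfrak{g})=\mathfrak{m}_\chi' \cdot U(\mathfrak{g}) + (\text{finite over } U(\mathfrak{g},e))$ in a suitable sense. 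Concretely, I would first fix a homogeneous basis of a complement to $\mathfrak{m}$ in $\mathfrak{g}$ adapted to the grading, and use PBW to write $M = M\mathfrak{m}_\chi + \sum_{\text{finite}} v_i U(\mathfrak{g},e)$ whenever $M$ is finitely generated over $\mathfrak{m}$ on the right.

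First I would make precise the hypothesis: $M$ finitely generated over $\mathfrak{m}$ means there are finitely many $w_1,\dots,w_k\in M$ with $M = \sum_j w_j U(\mathfrak{m})$. Passing to Whittaker coinvariants, $H_0(\mathfrak{m}_\chi,M)=M/M\mathfrak{m}_\chi$ is spanned by the images of the $w_j$ as a module over $U(\mathfrak{m})/\mathfrak{m}_\chi\cong\C$; hence $H_0(\mathfrak{m}_\chi,M)$ is already finite dimensional as a $\C$-vector space, by the very definition of $\mathfrak{m}_\chi$. Wait — that is the crux and it is essentially immediate: $M\mathfrak{m}_\chi \supseteq w_j(x-\chi(x))$ for all $x\in\mathfrak{m}$, so in the quotient each $w_j u$ for $u\in U(\mathfrak{m})$ is congruent to a scalar multiple of $w_j$ (filtering $U(\mathfrak{m})$ by degree and inducting, using that $U(\mathfrak{m})$ is generated by $\mathfrak{m}$ and that $\mathfrak{m}_\chi M$ absorbs the leading terms). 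Thus $M/M\mathfrak{m}_\chi = \sum_j \C\,\overline{w_j}$ is finite dimensional over $\C$, a fortiori finite dimensional as a right $U(\mathfrak{g},e)$-module. The only genuine content is that the right $U(\mathfrak{g},e)$-action on $M/M\mathfrak{m}_\chi$ is well defined, which is exactly what was recorded right after~(\ref{coinv}) using the defining condition~(\ref{fw}): $\mathfrak{m}_\chi u\subseteq U(\mathfrak{g})\mathfrak{m}_\chi$ for $u\in U(\mathfrak{g},e)$ ensures $M\mathfrak{m}_\chi$ is stable under right multiplication by $U(\mathfrak{g},e)$.

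So in practice the proof is short: (i) cite~(\ref{fw})/(\ref{coinv}) for well-definedness of the $U(\mathfrak{g},e)$-structure; (ii) choose finitely many $\mathfrak{m}$-generators $w_1,\dots,w_k$ of $M$; (iii) show by induction on the PBW filtration degree of $u\in U(\mathfrak{m})$ that $w_j u \equiv \epsilon(u)\, w_j \pmod{M\mathfrak{m}_\chi}$ for the counit-type character $\epsilon$ determined by $\chi$, using that if $u$ has degree $\ge 1$ we may write $u = x u' + (\text{lower})$ with $x\in\mathfrak{m}$ and $x u' = (x-\chi(x))u' + \chi(x)u'$, the first term landing in $M\mathfrak{m}_\chi$ after moving $u'$ past (here one uses $[\mathfrak{m},\mathfrak{m}]\subseteq\mathfrak{m}$ and $\chi([\mathfrak{m},\mathfrak{m}])=0$ — standard, since $\chi$ vanishes on $[\mathfrak{m},\mathfrak{m}]$ as $(.,.)$ is invariant and $e\in\mathfrak{g}(2)$); (iv) conclude $\dim_\C M/M\mathfrak{m}_\chi\le k<\infty$.

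The main (and only mild) obstacle is step (iii): one must be slightly careful that when rewriting $w_j x u'$ as $w_j(x-\chi(x))u' + \chi(x)w_j u'$, the term $w_j(x-\chi(x))u'$ really lies in $M\mathfrak{m}_\chi$ rather than merely in $M\mathfrak{m}$; this is where we use that $\mathfrak{m}_\chi U(\mathfrak{m})\subseteq U(\mathfrak{m})\mathfrak{m}_\chi$ because $\mathfrak{m}_\chi = \{x-\chi(x):x\in\mathfrak{m}\}$ is a ``twisted'' copy of $\mathfrak{m}$ and $\chi$ is a Lie algebra homomorphism, so $M\mathfrak{m}_\chi$ is a right $U(\mathfrak{m})$-submodule of $M$ — equivalently $U(\mathfrak{m})\mathfrak{m}_\chi = \mathfrak{m}_\chi U(\mathfrak{m})$ is a two-sided ideal of $U(\mathfrak{m})$ with quotient $\C$. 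Once that normalization point is in hand, the induction closes and the lemma follows; no Kazhdan-filtration machinery or Skryabin equivalence is actually needed, only the elementary observation that $M$ finitely generated over $\mathfrak{m}$ forces the space of Whittaker coinvariants to be finite dimensional over the ground field.
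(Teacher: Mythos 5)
Your proof is correct and takes essentially the same approach as the paper, whose proof is literally the one-liner ``Obvious from the definition (\ref{coinv}).'' The elementary fact you spell out — that $U(\mathfrak{m})\mathfrak{m}_\chi=\mathfrak{m}_\chi U(\mathfrak{m})$ is the kernel of the character $U(\mathfrak{m})\twoheadrightarrow\C$ determined by $\chi$, so $M/M\mathfrak{m}_\chi\cong M\otimes_{U(\mathfrak{m})}\C_\chi$ is spanned over $\C$ by the images of any finite $\mathfrak{m}$-generating set of $M$ — is exactly what the paper takes for granted.
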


\begin{proof}
Obvious from the definition (\ref{coinv}).
\end{proof}

\begin{Lemma}\label{bronson}
For any right $U(\mathfrak{p})$-module $V$,
$H_0(\mathfrak{m}_\chi, V \otimes_{U(\mathfrak{p})} U(\mathfrak{g}))$
is isomorphic to the restriction of $V$ to $U(\mathfrak{g},e)$.
\end{Lemma}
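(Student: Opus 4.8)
The plan is to unravel both sides as vector spaces and match the $U(\mathfrak{g},e)$-actions directly. Recall the triangular decomposition $\mathfrak{g} = \mathfrak{m} \oplus \mathfrak{h} \oplus \mathfrak{n}$ where $\mathfrak{n} := \bigoplus_{d>0}\mathfrak{g}(d)$ is the nilradical of $\mathfrak{p} = \mathfrak{h}\oplus\mathfrak{n}$, so that the PBW theorem gives $U(\mathfrak{g}) \cong U(\mathfrak{p}) \otimes U(\mathfrak{m})$ as a right $U(\mathfrak{p})$-module and left $U(\mathfrak{m})$-module. Hence for a right $U(\mathfrak{p})$-module $V$ we have $V \otimes_{U(\mathfrak{p})} U(\mathfrak{g}) \cong V \otimes U(\mathfrak{m})$ as vector spaces, with the right $\mathfrak{m}$-action on the second tensor factor only up to lower-order corrections coming from the non-commutativity. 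Quotienting by the right action of $\mathfrak{m}_\chi = \{x - \chi(x) : x \in \mathfrak{m}\}$ should then collapse the $U(\mathfrak{m})$-factor down to $\C$: concretely, I would show that the composite map $V = V \otimes 1 \hookrightarrow V \otimes_{U(\mathfrak{p})} U(\mathfrak{g}) \twoheadrightarrow H_0(\mathfrak{m}_\chi, V \otimes_{U(\mathfrak{p})} U(\mathfrak{g}))$ is a linear isomorphism.

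First I would check surjectivity: given $v \otimes u$ with $u \in U(\mathfrak{g})$, write $u$ in PBW form as a sum of terms $p \cdot m$ with $p \in U(\mathfrak{p})$ and $m$ a PBW monomial in $\mathfrak{m}$; then $v \otimes p m = (vp) \otimes m$, and modulo $U(\mathfrak{g})\mathfrak{m}_\chi$ one rewrites each factor $x \in \mathfrak{m}$ appearing in $m$ as $\chi(x)$ plus an element of $\mathfrak{m}_\chi$, inductively reducing the degree in $\mathfrak{m}$ until only the $V \otimes 1$ part survives. For injectivity I would use a filtration/grading argument: put the Kazhdan grading (or simply the grading by $\mathfrak{m}$-degree) on $U(\mathfrak{g})$, observe that $\mathfrak{m}_\chi$ has a top-degree part equal to $\mathfrak{m}$ itself, and deduce that $V \mathfrak{m}_\chi$ inside $V \otimes_{U(\mathfrak{p})} U(\mathfrak{g})$ has associated graded equal to $V \otimes U(\mathfrak{m})\mathfrak{m}$, which is a complement to $V \otimes 1$; this forces the map $V \to H_0$ to be injective. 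Finally, I would verify that this isomorphism is $U(\mathfrak{g},e)$-equivariant: for $u \in U(\mathfrak{g},e) \subseteq U(\mathfrak{p})$ and $v \in V$, the class of $v \otimes u$ in $H_0$ equals the class of $vu \otimes 1$ by the very definition of $U(\mathfrak{g},e)$ via (\ref{fw}) (which guarantees $\mathfrak{m}_\chi u \subseteq U(\mathfrak{g})\mathfrak{m}_\chi$, so pushing $u$ past is legitimate after projecting), matching the right $U(\mathfrak{p})$-action on $V$ restricted to $U(\mathfrak{g},e)$.

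The main obstacle I anticipate is the equivariance bookkeeping: one has to be careful that the right action of $u \in U(\mathfrak{g},e)$ on the class of $v \otimes 1$ is genuinely $v u \otimes 1$ and not merely congruent to it modulo some correction that fails to lie in the image of $V \mathfrak{m}_\chi$. The definition (\ref{fw}) says $\mathfrak{m}_\chi u \subseteq U(\mathfrak{g})\mathfrak{m}_\chi$, which is exactly what is needed to move $u$ to the left past any $\mathfrak{m}_\chi$-factor; but here we are moving $u$ in from the right of $v \otimes 1$, and since $1 \in U(\mathfrak{g})$ there is nothing to move past — $(v\otimes 1)\cdot u = v \otimes u = vu \otimes 1$ because $u \in U(\mathfrak{p})$ and the tensor is over $U(\mathfrak{p})$. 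So in fact this step is immediate once the identification $H_0 \cong V$ is established; the real work is entirely in the vector-space isomorphism, and the cleanest route there is the associated-graded argument sketched above. I would present the surjectivity explicitly and invoke a degree count for injectivity, keeping the calculation short.
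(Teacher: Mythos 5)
Your proof is correct and follows essentially the same route as the paper: PBW gives $V \otimes_{U(\mathfrak{p})} U(\mathfrak{g}) \cong V \otimes U(\mathfrak{m})$ as right $U(\mathfrak{m})$-modules, whence the quotient by $\mathfrak{m}_\chi$ collapses to $V$, and the $U(\mathfrak{g},e)$-equivariance is immediate since $U(\mathfrak{g},e)\subseteq U(\mathfrak{p})$ lets one slide $u$ across the tensor sign. One small correction: the PBW decomposition $U(\mathfrak{g})\cong U(\mathfrak{p})\otimes U(\mathfrak{m})$ is an isomorphism of \emph{left} $U(\mathfrak{p})$- and \emph{right} $U(\mathfrak{m})$-modules (you have the handedness reversed), and in particular the right $\mathfrak{m}$-action on the second factor is exactly the regular one, with no ``lower-order corrections''; once this is stated cleanly, the isomorphism $H_0(\mathfrak{m}_\chi, V\otimes U(\mathfrak{m}))\cong V\otimes\bigl(U(\mathfrak{m})/U(\mathfrak{m})\mathfrak{m}_\chi\bigr)\cong V$ is immediate and the grading argument for injectivity is not really needed.
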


\begin{proof}
By the PBW theorem,
$V \otimes_{U(\mathfrak{p})} U(\mathfrak{g}) 
\cong V \otimes U(\mathfrak{m})$ as a right $U(\mathfrak{m})$-module.
It follows easily that the map
$V \rightarrow H_0(\mathfrak{m}_\chi, V \otimes_{U(\mathfrak{p})} U(\mathfrak{g}))$
sending $v$  to the image of $v \otimes 1$ is a vector space isomorphism.
For $u \in U(\mathfrak{g},e)$, this map sends
$vu$ to the image of $vu \otimes 1$, which is the same as
the image of $(v \otimes 1)u$. Hence our map is a homomorphism of
right $U(\mathfrak{g},e)$-modules.
\end{proof}

Given a vector space $M$,
let $M^*$ be the full linear dual
$\hom_{\C}(M,\C)$, and denote the annihilator in $M^*$ of a subspace $N \leq M$
by $N^\circ$ (which is of course canonically isomorphic to 
$(M / N)^*$).
If $M$ is a left module over an associative algebra $A$,
then $M^*$ is naturally a right module with action $(fa)(v) := f(av)$ for $f \in M^*, a \in A$ and $v \in M$.
Similarly if $M$ is a right module then $M^*$ is a left module
with action $(af)(v) = f(va)$.

For a right $U(\mathfrak{m})$-module $M$,
its {\em 
$\mathfrak{m}_\chi$-restricted dual} $M^\#$ is defined from
\begin{equation}\label{hash}
M^\# := \bigcup_{i \geq 0} (M \mathfrak{m}_\chi^i)^\circ
\subseteq M^*.
\end{equation}
This gives a functor $?^\#$ from $\rmod U(\mathfrak{m})$
to vector spaces.

\begin{Lemma}\label{exact}
The functor $?^\#$ is exact.
\end{Lemma}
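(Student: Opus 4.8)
The plan is to work directly from the definition \eqref{hash} of the $\mathfrak{m}_\chi$-restricted dual and exploit a basic nilpotency fact. The key observation is the following. Let $M$ be a right $U(\mathfrak{m})$-module, regarded as a module over the commutative (in fact one need not even use commutativity) algebra $U(\mathfrak{m})$. Because $\mathfrak{m} = \bigoplus_{d<0}\mathfrak{g}(d)$ is a nilpotent Lie algebra, the augmentation ideal $\mathfrak{m}_\chi U(\mathfrak{m})$ has the property that for every $v \in M$ there is some $i$ with $v \mathfrak{m}_\chi^{\,i} = 0$ — equivalently, $M = \bigcup_{i\ge 0} \{v \in M : v\mathfrak{m}_\chi^i = 0\}$. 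Here is why: $M$ is a colimit of its finitely generated submodules, and any finitely generated $U(\mathfrak{m})$-module on which $\mathfrak{m}$ — whence its image in $\operatorname{gr}$, a nilpotent Lie algebra — acts is annihilated by some power of $\mathfrak{m}_\chi$ by a standard filtration/Engel-type argument. Granting this, an element $f \in M^*$ lies in $M^\#$ if and only if $f$ vanishes on $M\mathfrak{m}_\chi^i$ for some $i$, i.e. $M^\# = \varinjlim_i (M/M\mathfrak{m}_\chi^i)^*$, the colimit (union) of the duals of the successive quotients.

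So the first step I would carry out is to reduce exactness of $?^\#$ to exactness of each of the intermediate functors $M \mapsto (M/M\mathfrak{m}_\chi^i)^*$ together with the fact that a directed colimit (here an increasing union indexed by $i$) of exact sequences of vector spaces is exact. Since we are over a field $\C$, filtered colimits are exact, so the only real content is each fixed $i$. Thus the second step is: given a short exact sequence $0 \to M' \to M \to M'' \to 0$ of right $U(\mathfrak{m})$-modules, show that
\[
0 \to (M''/M''\mathfrak{m}_\chi^i)^* \to (M/M\mathfrak{m}_\chi^i)^* \to (M'/M'\mathfrak{m}_\chi^i)^* \to 0
\]
is exact for every $i\ge 0$. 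Dualization over a field is exact, so this reduces in turn to exactness of $M \mapsto M/M\mathfrak{m}_\chi^i$ on the given sequence; and right-exactness of this functor is automatic (it is a quotient functor, or $- \otimes_{U(\mathfrak{m})} U(\mathfrak{m})/\mathfrak{m}_\chi^i U(\mathfrak{m})$ up to the side conventions), so the point is left-exactness, i.e. that $M'\cap M\mathfrak{m}_\chi^i = M'\mathfrak{m}_\chi^i$. The inclusion $\supseteq$ is clear; for $\subseteq$ one uses that $M\mathfrak{m}_\chi^i/M'\mathfrak{m}_\chi^i$ surjects onto $M''\mathfrak{m}_\chi^i$ with kernel $(M'\cap M\mathfrak{m}_\chi^i)/M'\mathfrak{m}_\chi^i$, and a short diagram chase comparing the filtrations by powers of $\mathfrak{m}_\chi$ on $0\to M'\to M\to M''\to 0$.

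I expect the main obstacle to be exactly this last point — controlling $M'\cap M\mathfrak{m}_\chi^i$, i.e. verifying that the $\mathfrak{m}_\chi$-adic filtration behaves well with respect to subobjects. In general, for an arbitrary ideal this is the Artin--Rees type phenomenon and can fail, so one has to use the specific structure: $\mathfrak{m}_\chi$ is spanned by elements $x - \chi(x)$ with $x \in \mathfrak{m}$, and on any module in the relevant category each vector is killed by a high power of $\mathfrak{m}_\chi$. In practice the cleanest route is probably to avoid a direct Artin--Rees argument: instead prove exactness of $?^\#$ in one stroke by checking the three conditions (kills no nonzero object faithfully on the outside, etc.) directly from the colimit description $M^\# = \varinjlim_i (M/M\mathfrak{m}_\chi^i)^*$ together with the identity $M = \bigcup_i \{v : v\mathfrak{m}_\chi^i = 0\}$, which forces $M^\#$ to be a ``full enough'' dual that dualizing the exact sequence term by term and passing to the union recovers exactness. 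Either way, the substance of the proof is the interplay between the local nilpotence of $\mathfrak{m}_\chi$ on these modules and exactness of linear duality over $\C$; everything else is formal.
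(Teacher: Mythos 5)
There is a genuine gap, and you have correctly located it yourself but then not repaired it. Your strategy is to write $M^\# = \varinjlim_i (M/M\mathfrak{m}_\chi^i)^*$ and reduce exactness of $?^\#$ to exactness of each intermediate functor $M \mapsto (M/M\mathfrak{m}_\chi^i)^*$, i.e.\ to the Artin--Rees-type identity $M' \cap M\mathfrak{m}_\chi^i = M'\mathfrak{m}_\chi^i$. This identity is false, and as a result the individual functors $M \mapsto (M\mathfrak{m}_\chi^i)^\circ$ are genuinely not exact. A minimal counterexample: take $\mathfrak{m}$ one-dimensional spanned by $x$ with $\chi(x)=1$, so $U(\mathfrak{m}) = \C[x]$ and $\mathfrak{m}_\chi = \C(x-1)$. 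With $M = \C[x]$ and $M' = (x-1)\C[x]$ one has $M'\cap M\mathfrak{m}_\chi = (x-1)\C[x]$ but $M'\mathfrak{m}_\chi = (x-1)^2\C[x]$, and the restriction map $(M\mathfrak{m}_\chi)^\circ \to (M'\mathfrak{m}_\chi)^\circ$ is the zero map between two one-dimensional spaces, hence not surjective. So the exactness of $?^\#$ cannot be established one $i$ at a time: a functional in $(M'\mathfrak{m}_\chi^i)^\circ$ only extends to an element of $(M\mathfrak{m}_\chi^j)^\circ$ for some \emph{larger} $j$, and controlling that growth is precisely the content of the lemma. Your closing suggestion to ``check the three conditions directly from the colimit description'' does not avoid this; it restates the problem. (A secondary slip: the claim that every $v\in M$ satisfies $v\mathfrak{m}_\chi^i=0$ for some $i$ is false for an arbitrary right $U(\mathfrak{m})$-module, e.g.\ $M=U(\mathfrak{m})$ itself, though this does not affect the colimit description, which holds by definition.)

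The paper sidesteps the Artin--Rees obstruction entirely by exhibiting a natural isomorphism $?^\# \cong \hom_{\mathfrak{m}}(?, E_\chi)$ where $E_\chi := \bigcup_{i\ge 0}(I_\chi^i)^\circ \subseteq U(\mathfrak{m})^*$ and $I_\chi$ is the two-sided ideal generated by $\mathfrak{m}_\chi$. Once this identification is in place, exactness is equivalent to injectivity of the single module $E_\chi$, which is established in Skryabin's appendix to \cite{Pslice}. The nontrivial input is thus an injectivity statement about one fixed object, not an Artin--Rees estimate, and it is exactly what permits the index $i$ to grow as you pass from $M'$ to $M$. If you want to rescue your approach you would need some version of that injectivity (or a direct extension lemma replacing it), not the false equality of filtrations.
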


\begin{proof}
Let $I_\chi$ be the two-sided ideal of $U(\mathfrak{m})$ generated by $\mathfrak{m}_\chi$.
The subspace $(I_\chi^i)^\circ$
of $U(\mathfrak{m})^*$ is naturally a right $U(\mathfrak{m})$-module
with action $(fx)(y) = f(xy)$.
For any right $U(\mathfrak{m})$-module $M$, we claim that the linear map
$$
\theta:\hom_{\mathfrak{m}}(M, (I_\chi^i)^\circ)
\rightarrow (M \mathfrak{m}_\chi^i)^\circ,\quad
f \mapsto \ev \circ f
$$
is an isomorphism, where $\ev:U(\mathfrak{m})^* \rightarrow \C$
is evaluation at $1$.
To see this, take $f \in \hom_{\mathfrak{m}}(M, (I_\chi^i)^\circ)$ and observe that 
$\theta(f)$ annihilates $M \mathfrak{m}_\chi^i$, indeed,
$$
(\ev\circ f)(v x) =
f(vx)(1) = (f(v) x)(1) = f(v)(x) = 0
$$
for $v \in M$ and $x \in \mathfrak{m}_\chi^i$.
Hence the map makes sense.
To prove that it is an isomorphism, construct
a two-sided inverse $\phi:(M \mathfrak{m}_\chi^i)^\circ \rightarrow 
\hom_{\mathfrak{m}}(M, ((I_\chi^i)^\circ)$
by defining
$\phi(g) \in \hom_{\mathfrak{m}}(M, ((I_\chi^i)^\circ)$
for $g \in (M \mathfrak{m}_\chi^i)^\circ$
from
$\phi(g)(v)(u) := g(vu)$
for $v \in M$ and $u \in U(\mathfrak{m})$.

Now let $E_\chi := \bigcup_{i \geq 0} (I_\chi^i)^\circ$,
the space of all $f: U(\mathfrak{m})\rightarrow \C$ which annihilate 
$I_\chi^i$ for sufficiently large $i$.
The result from the previous paragraph taken for all $i$
gives us a natural isomorphism
$$
\hom_{\mathfrak{m}}(M, E_\chi)
=\bigcup_{i \geq 0}
\hom_{\mathfrak{m}}(M, 
 (I_\chi^i)^\circ)
\stackrel{\sim}{\rightarrow}
\bigcup_{i \geq 0} (M \mathfrak{m}_\chi^i)^\circ
= M^\#,
\quad
f \mapsto \ev \circ f
$$
for every right $U(\mathfrak{m})$-module $M$.
Hence the functors $?^\#$ 
and
$\hom_{\mathfrak{m}}(?, E_\chi)$ are isomorphic.
The latter functor is exact because $E_\chi$ is 
an injective right $U(\mathfrak{m})$-module; see 
\cite[Assertion 2]{Skryabin}.
\end{proof}

Now suppose that $M$ is a right $U(\mathfrak{g})$-module.
We observe that the subspace $M^\#$ of $M^*$ from (\ref{hash})
is actually a left $U(\mathfrak{g})$-submodule
belonging to the category $(U(\mathfrak{g}), \mathfrak{m}_\chi)\lmod$.
So we can view $?^\#$ as an exact functor from $\rmod U(\mathfrak{g})$
to $(U(\mathfrak{g}),\mathfrak{m}_\chi)\lmod$.

\begin{Lemma}\label{mainlem}
For any right $U(\mathfrak{g})$-module $M$, we have that
$$
H^0(\mathfrak{m}_\chi, M^\#) = 
H^0(\mathfrak{m}_\chi, M^*) = (M \mathfrak{m}_\chi)^\circ
$$
as subspaces of $M^*$.
Moreover there is a natural isomorphism of left $U(\mathfrak{g},e)$-modules
$(M \mathfrak{m}_\chi)^\circ \cong H_0(\mathfrak{m}_\chi, M)^*.$
\end{Lemma}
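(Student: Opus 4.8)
The plan is to prove Lemma~\ref{mainlem} in two parts, treating the two displayed assertions separately but noting that the second is essentially a restatement of a piece of the first together with the canonical pairing between a quotient and a subspace of the dual.

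\textbf{The identity of subspaces of $M^*$.} First I would unwind the definitions. By construction $M^\# = \bigcup_{i \geq 0}(M\mathfrak{m}_\chi^i)^\circ$, and the largest of these annihilator spaces is $(M\mathfrak{m}_\chi^0)^\circ = M^\circ = 0$; the point is rather that the union is increasing, with $(M\mathfrak{m}_\chi)^\circ$ sitting inside as the $i=1$ term. A functional $f \in M^*$ lies in $H^0(\mathfrak{m}_\chi, M^*)$ precisely when $f x = 0$ for all $x \in \mathfrak{m}_\chi$, i.e. when $f(Mx) = 0$ for all $x \in \mathfrak{m}_\chi$, i.e. when $f(M\mathfrak{m}_\chi) = 0$, i.e. when $f \in (M\mathfrak{m}_\chi)^\circ$. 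This gives the second equality $H^0(\mathfrak{m}_\chi, M^*) = (M\mathfrak{m}_\chi)^\circ$ directly. For the first equality, I would observe that $(M\mathfrak{m}_\chi)^\circ \subseteq M^\#$ (it is the $i=1$ term in the union defining $M^\#$), while conversely any $f \in H^0(\mathfrak{m}_\chi, M^\#)$ is in particular a functional on $M$ killing $M\mathfrak{m}_\chi$, so lies in $(M\mathfrak{m}_\chi)^\circ$; and since $(M\mathfrak{m}_\chi)^\circ \subseteq M^\# \subseteq M^*$, applying $H^0(\mathfrak{m}_\chi, -)$ and using that this functor is just "take the subspace killed by $\mathfrak{m}_\chi$" shows the three subspaces coincide. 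One small thing to check here is that $M^\#$ really is a $U(\mathfrak{g})$-submodule of $M^*$ stable under $\mathfrak{m}_\chi$ in the way needed, but this was already asserted in the sentence just before the lemma, so I may cite it.

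\textbf{The isomorphism $(M\mathfrak{m}_\chi)^\circ \cong H_0(\mathfrak{m}_\chi, M)^*$.} This is the canonical duality: for any vector space $M$ and subspace $N \leq M$, restriction of functionals induces an isomorphism $(M/N)^* \xrightarrow{\sim} N^\circ$ (this is exactly the parenthetical remark in the definition of $N^\circ$ given in the text). Taking $N := M\mathfrak{m}_\chi$, and recalling $H_0(\mathfrak{m}_\chi, M) = M/M\mathfrak{m}_\chi$ by definition~(\ref{coinv}), we get a vector space isomorphism $H_0(\mathfrak{m}_\chi, M)^* \cong (M\mathfrak{m}_\chi)^\circ$. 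What remains is to see this is $U(\mathfrak{g},e)$-equivariant. The left action of $u \in U(\mathfrak{g},e)$ on $M^*$ is $(uf)(v) = f(vu)$; the right action of $u$ on the quotient $H_0(\mathfrak{m}_\chi,M) = M/M\mathfrak{m}_\chi$ is well-defined precisely by the containment $\mathfrak{m}_\chi u \subseteq U(\mathfrak{g})\mathfrak{m}_\chi$ from~(\ref{fw}); and the induced left action on its dual is again $(u g)(\bar v) = g(\bar v u)$. So I would just check that the canonical map intertwines these, which is immediate from the formulas: if $g \in H_0(\mathfrak{m}_\chi,M)^*$ corresponds to $f = g \circ \pi \in (M\mathfrak{m}_\chi)^\circ$ (with $\pi$ the quotient map), then $ug$ corresponds to $v \mapsto g(\pi(v)u) = g(\pi(vu)) = f(vu) = (uf)(v)$, as needed.

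\textbf{Expected main obstacle.} This lemma is essentially bookkeeping, so there is no deep obstacle; the only place requiring genuine care is making sure that the module structures are compatible across the three identifications — in particular that passing to $\mathfrak{m}_\chi$-invariants of $M^\#$ (rather than all of $M^*$) is harmless, which rests on the two facts flagged just before the lemma that $M^\#$ is a $U(\mathfrak{g})$-submodule lying in $(U(\mathfrak{g}),\mathfrak{m}_\chi)\lmod$, and on not confusing the ambient right-module structure on $M$ with the left-module structure on $M^*$ when computing $\mathfrak{m}_\chi$-annihilators. I would be explicit about which side each action is on when writing $f(M\mathfrak{m}_\chi) = 0$, since that is the one spot where a sign-of-action slip would be easy to make.
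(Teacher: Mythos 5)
Your proposal is correct and follows essentially the same route as the paper: unwind $H^0(\mathfrak{m}_\chi, M^*) = (M\mathfrak{m}_\chi)^\circ$ directly from the left-module action on $M^*$, sandwich $H^0(\mathfrak{m}_\chi, M^\#)$ between it and $(M\mathfrak{m}_\chi)^\circ$ via the obvious inclusions, and then invoke the canonical isomorphism $(M/N)^* \cong N^\circ$ for the second part. Two tiny slips worth fixing for precision but not affecting the argument: since $M^*$ is a \emph{left} $U(\mathfrak{g})$-module you should write $xf = 0$ rather than $fx = 0$ when describing $H^0(\mathfrak{m}_\chi, M^*)$, and $(M\mathfrak{m}_\chi^0)^\circ = M^\circ = 0$ is the \emph{smallest} term in the increasing union defining $M^\#$, not the largest (you correct this yourself in the same sentence).
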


\begin{proof}
For the first statement, we observe that
\begin{align*}
H^0(\mathfrak{m}_\chi, M^*) 
&= \{f \in M^*\:|\:xf = 0\text{ for all }x \in \mathfrak{m}_\chi\}\\
&= \{f \in M^*\:|\:(xf)(v) = 0\text{ for all }x \in \mathfrak{m}_\chi, v \in M\}\\
&= \{f \in M^*\:|\:f(vx) = 0\text{ for all }v \in M, x \in \mathfrak{m}_\chi\}\\
&= \{f \in M^*\:|\:f(v) = 0\text{ for all }v \in M\mathfrak{m}_\chi\}
= (M \mathfrak{m}_\chi)^\circ.
\end{align*}
We get that $(M \mathfrak{m}_\chi)^\circ = H^0(\mathfrak{m}_\chi, M^\#)$ too
since there are obviously inclusions
$(M \mathfrak{m}_\chi)^\circ \subseteq H^0(\mathfrak{m}_\chi, M^\#)
\subseteq H^0(\mathfrak{m}_\chi, M^*)$.
Then for the second isomorphism just use the usual natural isomorphism
$(M \mathfrak{m}_\chi)^\circ \cong (M / M \mathfrak{m}_\chi)^*$.
\end{proof}

\begin{Theorem}\label{altdef}
There are natural isomorphisms of right $U(\mathfrak{g},e)$-modules
$$
H^0(\mathfrak{m}_\chi, M^\#)^*\cong 
H_0(\mathfrak{m}_\chi, M)
\cong H^0(\mathfrak{m}_\chi,M^*)^*
$$
for any right $U(\mathfrak{g})$-module that is finitely generated
over $\mathfrak{m}$.
\end{Theorem}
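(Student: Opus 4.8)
The plan is to reduce the statement to Lemma~\ref{mainlem} together with the finite-dimensionality supplied by Lemma~\ref{fin}. Recall from Lemma~\ref{mainlem} that for any right $U(\mathfrak{g})$-module $M$ one has the equality of left $U(\mathfrak{g},e)$-submodules $H^0(\mathfrak{m}_\chi, M^\#) = H^0(\mathfrak{m}_\chi, M^*) = (M\mathfrak{m}_\chi)^\circ$ inside $M^*$, together with a natural isomorphism of left $U(\mathfrak{g},e)$-modules $(M\mathfrak{m}_\chi)^\circ \cong H_0(\mathfrak{m}_\chi, M)^*$. Thus, before the outermost $(-)^*$ is applied, both end terms of the asserted chain are canonically and naturally identified, as left $U(\mathfrak{g},e)$-modules, with $H_0(\mathfrak{m}_\chi, M)^*$.

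First I would apply the contravariant full-linear-dual functor $(-)^*$ to this identification. Since the dual of a left $U(\mathfrak{g},e)$-module carries a canonical right $U(\mathfrak{g},e)$-module structure and $(-)^*$ is functorial, this produces natural isomorphisms of right $U(\mathfrak{g},e)$-modules
\[
H^0(\mathfrak{m}_\chi, M^\#)^* = H^0(\mathfrak{m}_\chi, M^*)^* \cong H_0(\mathfrak{m}_\chi, M)^{**}.
\]
It then remains only to identify $H_0(\mathfrak{m}_\chi, M)^{**}$ with $H_0(\mathfrak{m}_\chi, M)$ itself, and here I would invoke Lemma~\ref{fin}: as $M$ is finitely generated over $\mathfrak{m}$, the space $H_0(\mathfrak{m}_\chi, M) = M / M\mathfrak{m}_\chi$ is finite dimensional, so the evaluation map $v \mapsto (f \mapsto f(v))$ is a linear isomorphism onto the double dual. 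A one-line check on the actions (using $(af)(v) = f(va)$ on a dual and iterating) shows this map is a homomorphism of right $U(\mathfrak{g},e)$-modules, and it is visibly natural in $M$. Composing the two steps gives the two required natural isomorphisms.

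The proof is thus essentially bookkeeping once Lemma~\ref{mainlem} is available; the only substantive point — and the sole place the hypothesis is used — is the reflexivity step, where finite-dimensionality via Lemma~\ref{fin} is exactly what makes the double-dual map an isomorphism (the statement genuinely fails without some finiteness assumption on $M$, since $H_0(\mathfrak{m}_\chi, M)$ need not be reflexive otherwise). I expect no real obstacle; one need only take care that every intermediate identification is an isomorphism of $U(\mathfrak{g},e)$-modules of the correct handedness and is natural in $M$, so that the composite is genuinely a natural transformation of functors $\rmod U(\mathfrak{g}) \to \rmod U(\mathfrak{g},e)$.
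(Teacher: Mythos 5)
Your argument is correct and is essentially identical to the paper's: both dualize the chain of isomorphisms from Lemma~\ref{mainlem} and then invoke Lemma~\ref{fin} to get reflexivity of $H_0(\mathfrak{m}_\chi, M)$ via finite-dimensionality. The only difference is that you spell out the (routine) compatibility of the double-dual evaluation map with the right $U(\mathfrak{g},e)$-action, which the paper leaves implicit.
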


\begin{proof}
Take the duals of the isomorphisms
$$
H^0(\mathfrak{m}_\chi, M^\#) \cong 
H_0(\mathfrak{m}_\chi, M)^*\cong
H^0(\mathfrak{m}_\chi, M^\#)$$ 
from Lemma~\ref{mainlem}
and note 
that
$(H_0(\mathfrak{m}_\chi, M)^*)^* \cong
H_0(\mathfrak{m}_\chi, M)$
by Lemma~\ref{fin}.
\end{proof}

The following corollary is equivalent to \cite[Lemma 4.6]{Ly} (attributed there to N. Wallach).

\begin{Corollary}\label{lynchc}
The functor $H_0(\mathfrak{m}_\chi, ?)$ sends short exact sequences
of right $U(\mathfrak{g})$-modules that are finitely generated
over $\mathfrak{m}$ to short exact sequences of finite dimensional
right $U(\mathfrak{g},e)$-modules.
\end{Corollary}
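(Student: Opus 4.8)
The plan is to realize the restriction of $H_0(\mathfrak{m}_\chi,?)$ to the category of right $U(\mathfrak{g})$-modules finitely generated over $\mathfrak{m}$ as a composite of exact functors, using the alternative description of $H_0(\mathfrak{m}_\chi,?)$ provided by Theorem~\ref{altdef}. This is the natural route because $H_0(\mathfrak{m}_\chi,?)$ is visibly right exact, being a cokernel-type functor, so the only genuine content of the corollary is left exactness, and the reformulation below yields exactness on both sides at once.

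First I would check that the finite-generation hypothesis propagates along a short exact sequence $0 \to M' \to M \to M'' \to 0$ of right $U(\mathfrak{g})$-modules. Since $\mathfrak{m}$ is a finite dimensional Lie algebra, $U(\mathfrak{m})$ is Noetherian, so if $M$ is finitely generated over $\mathfrak{m}$ then so are the submodule $M'$ and the quotient $M''$. Consequently Theorem~\ref{altdef} applies to all three terms and, being natural in the module, supplies an isomorphism of functors $H_0(\mathfrak{m}_\chi,?) \cong \bigl(H^0(\mathfrak{m}_\chi,?^\#)\bigr)^*$ on this subcategory. It therefore suffices to prove that $M \mapsto H^0(\mathfrak{m}_\chi, M^\#)^*$ carries short exact sequences to short exact sequences of finite dimensional right $U(\mathfrak{g},e)$-modules.

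Now I would decompose this functor into three pieces. The assignment $M \mapsto M^\#$ is, by Lemma~\ref{exact} together with the remark following it, an exact contravariant functor from $\rmod U(\mathfrak{g})$ to $(U(\mathfrak{g}),\mathfrak{m}_\chi)\lmod$ (exactness is tested on underlying vector spaces, and $M^\#$ always lies in $(U(\mathfrak{g}),\mathfrak{m}_\chi)\lmod$). Next, $H^0(\mathfrak{m}_\chi,?)$ is exact on $(U(\mathfrak{g}),\mathfrak{m}_\chi)\lmod$, being one half of Skryabin's equivalence of categories \cite{Skryabin}; here one uses that $(U(\mathfrak{g}),\mathfrak{m}_\chi)\lmod$ is a full abelian subcategory of $U(\mathfrak{g})\lmod$ closed under submodules and quotients, so that its short exact sequences are precisely the short exact sequences of underlying $U(\mathfrak{g})$-modules. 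Finally, the full linear dual $(?)^*$ is an exact contravariant functor from $U(\mathfrak{g},e)\lmod$ to $\rmod U(\mathfrak{g},e)$. The composite of two contravariant exact functors with a covariant exact functor is covariant and exact, so $M \mapsto H^0(\mathfrak{m}_\chi, M^\#)^*$ sends $0 \to M' \to M \to M'' \to 0$ to a short exact sequence of right $U(\mathfrak{g},e)$-modules; finite dimensionality of the three terms is then immediate from Lemma~\ref{fin}. I do not anticipate any real obstacle: the argument is diagram bookkeeping on top of Theorem~\ref{altdef}, and the only points needing a moment's care — Noetherianity of $U(\mathfrak{m})$ to inherit finite generation, and membership of $M^\#$ in $(U(\mathfrak{g}),\mathfrak{m}_\chi)\lmod$ so that Skryabin's theorem applies — are already in hand in the excerpt.
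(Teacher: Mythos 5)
Your proposal is correct and follows essentially the same route as the paper: invoke Theorem~\ref{altdef} to rewrite $H_0(\mathfrak{m}_\chi,?)$ as $H^0(\mathfrak{m}_\chi, ?^\#)^*$ and then observe that this is a composite of the exact functors $?^\#$ (Lemma~\ref{exact}), $H^0(\mathfrak{m}_\chi,?)$ (Skryabin's equivalence), and linear duality, with finite dimensionality coming from Lemma~\ref{fin}. The extra bookkeeping you supply — Noetherianity of $U(\mathfrak{m})$ to propagate finite generation along the short exact sequence, and the careful tracking of variance — is sound and merely makes explicit what the paper leaves implicit.
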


\begin{proof}
In view of Theorem~\ref{altdef} it suffices to show that
the functor $H^0(\mathfrak{m}_\chi, ?^\#)^*$ is exact.
This is clear
as it is a composition of three exact functors:
the functor $?^\#:U(\mathfrak{g})\lmod \rightarrow (U(\mathfrak{g}),\mathfrak{m}_\chi)\lmod$
which is exact by Lemma~\ref{exact},
then the functor
$H^0(\mathfrak{m}_\chi, ?):(U(\mathfrak{g}),\mathfrak{m}_\chi)\lmod
\rightarrow U(\mathfrak{g},e)\lmod$ which is exact 
as it is an equivalence of categories by Skryabin's theorem, then the
duality $?^*$.
\end{proof}

\section{Irreducible standard modules and induced primitive ideals}\label{sm}

Continuing with our fixed pyramid $\pi$,
we define {\em column-separated} $\pi$-tableaux
in exactly the same way as was done in the introduction 
in the left-justified case.
The following theorem explains the significance of
this notion
from a representation theoretic perspective.
(We point out that there is a typo
in the definition of ``separated'' in \cite{BKrep} in which the inequalities
$r < s$ and $r > s$ are the wrong way round.)

\begin{Theorem}[{\cite[Theorem 8.25]{BKrep}}]\label{sep2}
For a column-strict $\pi$-tableau $A$,
the standard module $V(A,e)$ is irreducible if and only if
$A$ is column-separated, in which case $V(A,e) \cong L(A,e)$.
\end{Theorem}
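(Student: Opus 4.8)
The plan is to prove Theorem~\ref{sep2} by a dimension-counting argument, comparing the dimension of the standard module $V(A,e)$ with that of the irreducible $L(A,e)$, and showing the two agree precisely when $A$ is column-separated. Since $L(A,e)$ is by construction the unique irreducible quotient of $V(A,e)$, we have $\dim L(A,e) \le \dim V(A,e)$ always, with equality if and only if $V(A,e)$ is irreducible. So the whole question reduces to computing both dimensions.

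First I would recall that $V(A,e) = V(A)$ is a finite dimensional irreducible $\mathfrak{p}$-module whose dimension is computed by Weyl's dimension formula for the Levi $\mathfrak{h} = \mathfrak{gl}_{q_1}(\C)\oplus\cdots\oplus\mathfrak{gl}_{q_l}(\C)$: it is a product of Weyl dimension formulas, one for each column of the pyramid $\pi$, applied to the column-strict condition that $\gamma(A)-\beta-\rho$ is $\mathfrak{h}$-dominant. This is a clean, closed-form expression. The work is then entirely on the side of $\dim L(A,e)$. For this I would invoke the machinery already set up: Theorem~\ref{labels} identifies $I(L(A,e)) = I(\rho(A))$, and via Premet's Theorem~\ref{pti} (or rather the version relating dimensions of one-dimensional and more general modules — but here we want the general comparison) the dimension of $L(A,e)$ is the Goldie rank of $U(\mathfrak{g})/I(\rho(A))$, which is computed by Joseph's Goldie rank polynomial machinery, ultimately via Kazhdan--Lusztig polynomials as in Theorems~\ref{foldie} and \ref{myg}. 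Alternatively, and perhaps more self-containedly within this section, one can use the known composition multiplicities $[M(A,e):L(B,e)] = [M(\rho(A)):L(\rho(B))]$ together with the character of the standard module to express $\dim L(A,e)$ in terms of inverse Kazhdan--Lusztig data.

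The key step is then combinatorial: one must show that the column-separated condition on $A$ is exactly the condition under which the relevant Kazhdan--Lusztig polynomial data degenerates so that $\dim L(A,e) = \dim V(A,e)$. When $A$ is column-separated, no two columns are linked, and I expect this translates into the statement that the relevant parabolic Kazhdan--Lusztig polynomial $P_{x,y}(t)$ is trivial (equals $1$), so that the inverse decomposition number is $\pm 1$ and the character of $L(\rho(A))$ coincides (up to the translation-principle bookkeeping) with a single induced character matching $V(A)$. Conversely, if some pair of columns is linked, one exhibits an extra composition factor in $V(A,e)$ forcing the strict inequality $\dim L(A,e) < \dim V(A,e)$; here the explicit linking conditions (the three cases $|I|>|J|$, $|I|<|J|$, $|I|=|J|$ in the introduction's definition) should correspond precisely to the minimal configurations producing a nontrivial KL polynomial or a nontrivial extension in category $\mathcal O$ for $\mathfrak{gl}$ of small rank.

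The main obstacle will be this last combinatorial translation: matching the somewhat intricate column-linking definition against the condition for triviality of the relevant (parabolic) Kazhdan--Lusztig polynomials. I expect this to require a reduction to the two-column case — since linking is a condition on pairs of columns, and both the standard module character and the KL combinatorics should factor appropriately — followed by an explicit analysis of $\mathfrak{gl}_m(\C)$ category $\mathcal O$ with a two-block parabolic, where linked versus unlinked intervals $I, J$ correspond exactly to the classical ``linkage'' of two segments in the sense of the Kazhdan--Lusztig/Bernstein--Gelfand combinatorics. Once the two-column case is settled, a multiplicativity argument (using that the standard module is a tensor product over columns and that unlinked columns contribute independently) should complete the proof. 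I would therefore organize the argument as: (i) dimension of $V(A,e)$ via Weyl; (ii) dimension of $L(A,e)$ via Theorem~\ref{labels} and Joseph/KL; (iii) reduction to two columns; (iv) explicit two-column analysis identifying ``linked'' with ``nontrivial KL polynomial''; (v) reassembly.
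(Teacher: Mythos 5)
The paper does not prove this statement: Theorem~\ref{sep2} is quoted verbatim as \cite[Theorem 8.25]{BKrep}, so there is no ``paper's own proof'' inside this article to compare against. The actual proof in \cite{BKrep} proceeds by quite different means, internal to the shifted Yangian presentation of $U(\mathfrak{g},e)$: the standard module $V(A,e)$ is analyzed as a module pulled back through the surjection from the shifted Yangian, built column-by-column, and irreducibility is decided by Yangian-style highest-weight/tensor-product arguments --- not by dimension comparison with Goldie ranks.

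Beyond the mismatch with the source, your proposal has two genuine problems. First, invoking Premet's Theorem~\ref{pti} and Theorem~\ref{labels} (Losev) to compute $\dim L(A,e)$ introduces a logical and chronological tangle: Theorem~\ref{sep2} is a 2008 input from \cite{BKrep} that the present paper uses in \S\ref{sm} precisely \emph{before} re-deriving Theorem~\ref{pti} in \S\ref{sproofs}, and the re-derivation of Theorem~\ref{pti} itself rests on machinery (Theorem~\ref{bigt}, Corollary~\ref{altclass}) imported from \cite[\S 8.5]{BKrep}. Using \ref{pti} to establish a result from that very section is circular if it is meant to replace the cited proof, and in any case it cannot be the argument of \cite{BKrep}, which predates \cite{Pnew}.

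Second, and more substantively, your step (iv) is not a ``translation'' but the entire content of the theorem, and what you describe is not quite right. The condition for $V(A,e)\cong L(A,e)$, equivalently (via Lemma~\ref{gform} and exactness of $\overline{H}_0(\mathfrak{m}_\chi,?)$) for the parabolic Verma module $M(A)$ to be simple in $\mathcal{O}_\pi$, is \emph{not} ``the relevant parabolic KL polynomial is $1$.'' Simplicity of a parabolic Verma module is governed by the Jantzen simplicity criterion and, in KL language, by the vanishing of a whole alternating sum of ordinary KL data, not by a single trivial polynomial. Moreover, the ``reduction to the two-column case'' you propose is exactly where the hard work lives: irreducibility of a module is not a priori a pairwise condition on columns, and establishing that linking between pairs of columns is both necessary and sufficient requires either the Yangian tensor-product machinery that \cite{BKrep} uses or a nontrivial inductive argument in category $\mathcal{O}$; neither is sketched here. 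As written, the proposal reduces the theorem to an unproved assertion of the same difficulty.
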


In the rest of the section we are going to apply this to deduce 
(a slight generalization of) the first equality in Theorem~\ref{sep};
see Theorem~\ref{msup} below.

\begin{Lemma}\label{dizz}
Let $M$ be a right $U(\mathfrak{g})$-module that is free as a 
$U(\mathfrak{m})$-module.
Then
$\ann_{U(\mathfrak{g})} M = \ann_{U(\mathfrak{g})}(M^\#)$,
where $M^\#$ is the left $U(\mathfrak{g})$-module defined 
in the previous section.
\end{Lemma}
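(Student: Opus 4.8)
The plan is to show the two inclusions $\ann_{U(\g)} M \subseteq \ann_{U(\g)}(M^\#)$ and $\ann_{U(\g)}(M^\#) \subseteq \ann_{U(\g)} M$ separately, the first being essentially formal and the second being where freeness over $U(\mathfrak{m})$ is used. For the first inclusion, suppose $a \in \ann_{U(\g)} M$, so $Ma = 0$. For any $f \in M^\# \subseteq M^*$ and any $v \in M$ we have $(af)(v) = f(va) = f(0) = 0$, so $af = 0$; hence $a \in \ann_{U(\g)}(M^*)$, and since $M^\#$ is a submodule of $M^*$ (as noted just before Lemma~\ref{mainlem}) also $a \in \ann_{U(\g)}(M^\#)$. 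Note this direction does not use freeness at all.

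For the reverse inclusion, the key point is that the natural pairing between $M$ and $M^\#$ is nondegenerate on the $M$ side \emph{when $M$ is free over $U(\mathfrak{m})$}. Concretely, I would argue: if $0 \neq v \in M$, then because $M$ is free over $U(\mathfrak{m})$ the element $v$ lies outside $M\mathfrak{m}_\chi^i$ for some $i$ (the powers $\mathfrak{m}_\chi^i$ sit inside the augmentation-type filtration of $U(\mathfrak{m})$ coming from the PBW grading, and in a free module their intersection over all $i$ is zero on each basis element, hence zero), so there is $f \in (M\mathfrak{m}_\chi^i)^\circ \subseteq M^\#$ with $f(v) \neq 0$. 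Given this separation property, suppose $a \in \ann_{U(\g)}(M^\#)$ but $a \notin \ann_{U(\g)} M$, so $va \neq 0$ for some $v \in M$. Pick $f \in M^\#$ with $f(va) \neq 0$; then $(af)(v) = f(va) \neq 0$, so $af \neq 0$, contradicting $af = 0$. Hence $a \in \ann_{U(\g)} M$.

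The one genuine thing to nail down is the separation claim: for $M$ free over $U(\mathfrak{m})$ and $0 \neq v \in M$, there exists $i$ with $v \notin M\mathfrak{m}_\chi^i$. Writing $v = \sum_k b_k \otimes u_k$ with respect to a $U(\mathfrak{m})$-basis $\{b_k\}$ of $M$ and $u_k \in U(\mathfrak{m})$ not all zero, the point is that $\mathfrak{m}_\chi$ generates a proper two-sided ideal of $U(\mathfrak{m})$ whose powers have trivial intersection — this follows because the associated graded of the PBW filtration identifies $U(\mathfrak{m})$ with $S(\mathfrak{m})$ and the top-degree component of $\mathfrak{m}_\chi^i$ generates the $i$th power of the (graded) augmentation ideal, whose powers intersect to zero. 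So for any fixed nonzero $u_k$ there is $i$ with $u_k \notin \mathfrak{m}_\chi^i$, and by freeness $v \notin M\mathfrak{m}_\chi^i$. I expect this to be the main (and only) obstacle; once the filtration statement about $U(\mathfrak{m})$ is in hand, the rest is the two short dualization arguments above. One could also phrase the whole thing via $E_\chi$ being an injective cogenerator for the relevant category of $U(\mathfrak{m})$-modules and the identification of $M^\#$ with $\hom_{\mathfrak{m}}(M, E_\chi)$ from the proof of Lemma~\ref{exact}, which makes the nondegeneracy conceptual, but the direct filtration argument is probably shortest.
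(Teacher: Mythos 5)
Your overall structure matches the paper's: the easy inclusion $\ann_{U(\mathfrak g)} M \subseteq \ann_{U(\mathfrak g)}(M^\#)$ is the same formal computation, and the reverse inclusion in both cases reduces to the single fact that $\bigcap_{i\geq 0} M\mathfrak m_\chi^i = 0$, which by freeness reduces further to $\bigcap_{i\geq 0} U(\mathfrak m)\mathfrak m_\chi^i = 0$. (Your phrasing via a separation/nondegeneracy statement for the pairing $M\times M^\#\to\C$ is logically equivalent to the paper's more direct ``$vu\in M\mathfrak m_\chi^i$ for all $i$ implies $vu=0$.'')

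The genuine gap is in your justification of $\bigcap_{i} U(\mathfrak m)\mathfrak m_\chi^i = 0$. You argue via the PBW filtration and its associated graded $S(\mathfrak m)$: the symbols of $\mathfrak m_\chi^i$ generate $\mathfrak m^i S(\mathfrak m)$, and $\bigcap_i \mathfrak m^iS(\mathfrak m)=0$. But this last fact does not transfer automatically from $\gr U(\mathfrak m)$ back to $U(\mathfrak m)$. The trouble is that $\gr\bigl(U(\mathfrak m)\mathfrak m_\chi^i\bigr)$ need not be contained in $\mathfrak m^i S(\mathfrak m)$: when a sum $\sum u_\alpha m_\alpha$ (with $m_\alpha\in\mathfrak m_\chi^i$) has cancellation in top PBW degree, the surviving symbol is controlled by the lower-degree terms of the $m_\alpha$, which involve the scalar corrections $-\chi(x)$ and are not constrained to lie in $\mathfrak m^i$. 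Equivalently, the PBW filtration is only a filtration, not a grading, and the ideal $U(\mathfrak m)\mathfrak m_\chi^i$ is not compatible with it in the way you would need; to push ``intersection of powers of the augmentation ideal is zero'' from $S(\mathfrak m)$ to $U(\mathfrak m)$ one would need an Artin--Rees type argument, which you have not supplied.

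The paper avoids this by first applying the filtration-preserving automorphism of $U(\mathfrak m)$ sending $x\mapsto x+\chi(x)$, which carries $U(\mathfrak m)\mathfrak m_\chi^i$ to $U(\mathfrak m)\mathfrak m^i$, and then using the honest $\Z$-grading on $\mathfrak m$ (inherited from the grading $\mathfrak g=\bigoplus_d\mathfrak g(d)$, with $\mathfrak m$ in strictly negative degrees), not the PBW filtration. Since $\mathfrak m$ sits in degrees $\leq -1$, the graded ideal $U(\mathfrak m)\mathfrak m^i$ lies entirely in degrees $\leq -i$, so the intersection vanishes immediately. If you replace your PBW-filtration paragraph with this twist-plus-grading argument, your proof becomes complete and essentially identical to the paper's.
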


\begin{proof}
Take $u \in \ann_{U(\mathfrak{g})} M$
and $f \in M^\#$.
Then $(uf)(v) = f(vu) = 0$ for every $v \in M$,
so $uf = 0$. This shows that
$\ann_{U(\mathfrak{g})}M \subseteq \ann_{U(\mathfrak{g})}(M^\#)$.
Conversely, by the definition (\ref{hash}), we have that
$$
\ann_{U(\mathfrak{g})} (M^\#) = \bigcap_{i \geq 0}
\ann_{U(\mathfrak{g})} (M \mathfrak{m}_\chi^i)^\circ.
$$
So any $u \in \ann_{U(\mathfrak{g})} (M^\#)$
satisfies $f(vu)=(uf)(v) = 0$ for all 
$i \geq 0$,
$f \in (M \mathfrak{m}_\chi^i)^\circ$ and $v\in M$.
This implies for any $v \in M$ that $vu \in M \mathfrak{m}_\chi^i$.
It remains to observe that
$\bigcap_{i \geq 0} M \mathfrak{m}_\chi^i = \bz$.
To see this, it suffices in view of the assumption that $M$ is a free $U(\mathfrak{m})$-module to check that 
$\bigcap_{i \geq 0}
U(\mathfrak{m}) \mathfrak{m}_\chi^i = \bz$.
Twisting by the automorphism of $U(\mathfrak{m})$ sending 
$x \in \mathfrak{m}$ to $x + \chi(x)$,
this is equivalent to the statement
$\bigcap_{i \geq 0}
U(\mathfrak{m}) \mathfrak{m}^i = \bz$, which is easy to see by
considering the (strictly negative) grading on $\mathfrak{m}$.
\end{proof}

\begin{Lemma}
Let $V$ be a finite dimensional left $U(\mathfrak{p})$-module
and $V^*$ be the dual right $U(\mathfrak{p})$-module as in the previous section.
Then
$$
(V^* \otimes_{U(\mathfrak{p})} U(\mathfrak{g}))^\#
\cong S_\chi(V)
$$
as left $U(\mathfrak{g})$-modules. (On the right hand side we
are viewing $V$ as a left $U(\mathfrak{g},e)$-module by the natural restriction.)
\end{Lemma}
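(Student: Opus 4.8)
The goal is to identify the left $U(\mathfrak{g})$-module $(V^* \otimes_{U(\mathfrak{p})} U(\mathfrak{g}))^\#$ with the Skryabin functor output $S_\chi(V) = U(\mathfrak{g}) / U(\mathfrak{g}) \mathfrak{m}_\chi \otimes_{U(\mathfrak{g},e)} V$. The plan is to build an explicit linear map and then check it is $U(\mathfrak{g})$-equivariant. First I would use the PBW theorem, exactly as in the proof of Lemma~\ref{bronson}, to write $V^* \otimes_{U(\mathfrak{p})} U(\mathfrak{g}) \cong V^* \otimes U(\mathfrak{m})$ as a right $U(\mathfrak{m})$-module. This presents $M := V^* \otimes_{U(\mathfrak{p})} U(\mathfrak{g})$ as a free $U(\mathfrak{m})$-module (so, in particular, Lemma~\ref{dizz} applies to it, which is presumably how it gets used later), and it lets me compute $M^\# = \bigcup_{i \geq 0}(M \mathfrak{m}_\chi^i)^\circ$ concretely: a functional on $V^* \otimes U(\mathfrak{m})$ lies in $M^\#$ precisely when it kills $V^* \otimes \mathfrak{m}_\chi^i U(\mathfrak{m})$-type subspaces for $i \gg 0$, i.e. when, after twisting $x \mapsto x + \chi(x)$ on $U(\mathfrak{m})$, it factors through $V^* \otimes U(\mathfrak{m})/U(\mathfrak{m})\mathfrak{m}^i$ for some $i$.

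Next I would produce the map itself. There is an obvious $U(\mathfrak{g},e)$-balanced bilinear pairing sending $(\bar u, v) \in (U(\mathfrak{g})/U(\mathfrak{g})\mathfrak{m}_\chi) \times V$ to the functional $f_{u,v} \in M^*$ defined by $f_{u,v}(\phi \otimes 1) := \langle \phi, ?\rangle$ evaluated appropriately — more precisely, using the identification $M = V^* \otimes_{U(\mathfrak p)} U(\mathfrak g)$, one sends $\bar u \otimes v$ to the functional $\phi \otimes x \mapsto \phi(v) \cdot \eps(xu)$ where $\eps: U(\mathfrak{g}) \to \C$ is the linear map vanishing on $U(\mathfrak{g})\mathfrak{m}_\chi$ that reads off the ``constant term'' after straightening via $U(\mathfrak{g}) = U(\mathfrak{p}) \oplus U(\mathfrak{g})\mathfrak{m}_\chi$ (this uses $U(\mathfrak{g}) \cong U(\mathfrak{p}) \otimes U(\mathfrak{m})$ and $\mathfrak{m}_\chi$). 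One checks this is well-defined (kills $U(\mathfrak{g})\mathfrak{m}_\chi$ in the first slot and is $U(\mathfrak{g},e)$-balanced by the defining property \eqref{fw} of $U(\mathfrak{g},e)$), lands in $M^\#$ rather than just $M^*$ (the $\mathfrak{m}_\chi$-finiteness is what forces the ``restricted'' dual, and it follows because $V$ is finite dimensional so only finitely many PBW degrees in the $\mathfrak{m}$-direction are hit), and hence descends to a map $S_\chi(V) \to M^\#$.

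Then I would verify this map is a $U(\mathfrak{g})$-module homomorphism — the left action on $S_\chi(V)$ is by left multiplication on $U(\mathfrak{g})/U(\mathfrak{g})\mathfrak{m}_\chi$, the left action on $M^\# \subseteq M^*$ is the one dual to the right action on $M$, i.e. $(u f)(v) = f(vu)$ — and that it is a bijection. Surjectivity and injectivity I expect to fall out of a dimension/graded count: both sides carry compatible filtrations (coming from the PBW filtration on $U(\mathfrak{g})$, resp. the $\mathfrak{m}_\chi$-adic filtration defining $M^\#$) whose associated graded pieces are, on both sides, $V \otimes S(\mathfrak{g}/\mathfrak{p})$ (using $\mathfrak{g} = \mathfrak{m} \oplus \mathfrak{p}$ as $\mathfrak h$-modules and the Kazhdan filtration identifications underlying Skryabin's theorem), and the map respects the filtrations and is an isomorphism on associated graded. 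Alternatively, one can invoke Skryabin's theorem directly: $S_\chi(V) \in (U(\mathfrak{g}),\mathfrak{m}_\chi)\lmod$, $M^\# \in (U(\mathfrak{g}),\mathfrak{m}_\chi)\lmod$ as well, and apply $H^0(\mathfrak{m}_\chi, -)$ to the map; by Lemma~\ref{mainlem} and Lemma~\ref{bronson}, $H^0(\mathfrak{m}_\chi, M^\#) = (M\mathfrak{m}_\chi)^\circ \cong H_0(\mathfrak{m}_\chi, M)^* \cong (V|_{U(\mathfrak{g},e)})^*$... wait, that dualizes $V$; more cleanly, since $M$ is $U(\mathfrak{m})$-free one also has $H_0(\mathfrak{m}_\chi, V^*\otimes_{U(\mathfrak p)}U(\mathfrak g)) \cong V^*$ by Lemma~\ref{bronson}, so $H^0(\mathfrak{m}_\chi, M^\#) \cong (V^*)^* \cong V = H^0(\mathfrak{m}_\chi, S_\chi(V))$, and then Skryabin's equivalence upgrades an isomorphism of $U(\mathfrak{g},e)$-modules on Whittaker invariants to an isomorphism of the original $U(\mathfrak{g})$-modules — provided one has checked the natural map induces this isomorphism on invariants.

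The main obstacle, I expect, is not any single deep input but rather the bookkeeping needed to write down the candidate map cleanly and confirm it lands in $M^\#$ and is $U(\mathfrak{g},e)$-balanced: the definition \eqref{fw} of $U(\mathfrak{g},e)$ as $\{u \in U(\mathfrak{p}) : \mathfrak{m}_\chi u \subseteq U(\mathfrak{g})\mathfrak{m}_\chi\}$ must be used precisely to make the pairing factor through the balanced tensor product, and the finite-dimensionality of $V$ must be used precisely to get $\mathfrak{m}_\chi$-local-finiteness (membership in the restricted dual rather than the full dual). Once the map is correctly set up, the isomorphism is most efficiently obtained by the Skryabin-theorem route in the last paragraph, which reduces everything to the two already-proved lemmas \ref{bronson} and \ref{mainlem} plus the bare fact that $H^0(\mathfrak{m}_\chi,-)$ is an equivalence, so I would organize the write-up that way.
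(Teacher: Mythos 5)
Your ``Alternatively'' route in the final two paragraphs is exactly the paper's proof: observe that both $S_\chi(V)$ and $M^\# := (V^*\otimes_{U(\mathfrak p)}U(\mathfrak g))^\#$ lie in $(U(\mathfrak g),\mathfrak m_\chi)\lmod$, then use Lemma~\ref{mainlem}, Lemma~\ref{bronson}, and finite-dimensionality of $V$ to compute $H^0(\mathfrak m_\chi, M^\#)\cong V\cong H^0(\mathfrak m_\chi, S_\chi(V))$, and conclude via Skryabin's equivalence. The only thing to tidy up is your hedge ``provided one has checked the natural map induces this isomorphism on invariants'': you don't need any natural map in hand, because an equivalence of categories \emph{reflects} isomorphisms --- if $H^0(\mathfrak m_\chi,-)$ is an equivalence and two objects of $(U(\mathfrak g),\mathfrak m_\chi)\lmod$ have isomorphic images, they are already isomorphic. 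Consequently the entire first half of your write-up (constructing the explicit pairing with the ``constant term'' functional $\eps$, checking $U(\mathfrak g,e)$-balancedness, matching filtrations) is unnecessary --- and as sketched it is also not quite right, since the projection $U(\mathfrak g)\to U(\mathfrak p)$ along $U(\mathfrak g)\mathfrak m_\chi$ does not land in $\C$; one would still need to pair with $V$ using the $U(\mathfrak g,e)$-action before extracting a scalar. The paper skips all of this and goes straight to the abstract argument, which is the cleaner and safer route.
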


\begin{proof}
Both modules belong to the category $(U(\mathfrak{g}),\mathfrak{m}_\chi)\lmod$.
So by Skryabin's equivalence of categories, it suffices to show that
$$
H^0(\mathfrak{m}_\chi,  (V^* \otimes_{U(\mathfrak{p})} U(\mathfrak{g}))^\#)
\cong
V
$$
as left $U(\mathfrak{g},e)$-modules.
By Lemma~\ref{mainlem}, we have that
$$
H^0(\mathfrak{m}_\chi,  (V^* \otimes_{U(\mathfrak{p})} U(\mathfrak{g}))^\#)
\cong 
H_0(\mathfrak{m}_\chi,  V^* \otimes_{U(\mathfrak{p})} U(\mathfrak{g}))^*.
$$
It remains to observe by Lemma~\ref{bronson} 
that $H_0(\mathfrak{m}_\chi,  V^* \otimes_{U(\mathfrak{p})}
U(\mathfrak{g}))
\cong
V^*$,
hence
$H_0(\mathfrak{m}_\chi,  V^* \otimes_{U(\mathfrak{p})} U(\mathfrak{g}))^*
\cong V$ as $V$ is finite dimensional.
\end{proof}

Let $A$ be a column-strict $\pi$-tableau. 
Recall the weight $\gamma(A)$ from (\ref{gammadef}) and the subsequent definition of the standard module $V(A,e)$; it is the restriction of
the left $U(\mathfrak{p})$-module $V(A)$ to the subalgebra $U(\mathfrak{g},e)$.

\begin{Lemma}\label{mainid}
For any column-strict $\pi$-tableau $A$, we have that
\begin{equation}\label{mainidf}
\ann_{U(\mathfrak{g})}
(V(A)^*\otimes_{U(\mathfrak{p})} U(\mathfrak{g}))
=
I(V(A,e)).
\end{equation}
\end{Lemma}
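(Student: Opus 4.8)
The plan is to assemble this from the two results established just above: the (unnamed) lemma identifying $(V^* \otimes_{U(\mathfrak{p})} U(\mathfrak{g}))^\#$ with $S_\chi(V)$ for finite dimensional $V$, and Lemma~\ref{dizz} on annihilators of $U(\mathfrak{m})$-free modules. First I would unwind the right hand side of (\ref{mainidf}): by the definition (\ref{il}) we have $I(V(A,e)) = \ann_{U(\mathfrak{g})} S_\chi(V(A,e))$, and by construction $V(A,e)$ is simply the restriction of the finite dimensional left $U(\mathfrak{p})$-module $V(A)$ to $U(\mathfrak{g},e)$. So the preceding lemma, applied with $V = V(A)$, gives $(V(A)^* \otimes_{U(\mathfrak{p})} U(\mathfrak{g}))^\# \cong S_\chi(V(A,e))$ as left $U(\mathfrak{g})$-modules, hence
$$
I(V(A,e)) = \ann_{U(\mathfrak{g})}\bigl((V(A)^* \otimes_{U(\mathfrak{p})} U(\mathfrak{g}))^\#\bigr).
$$

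Next I would check that the right $U(\mathfrak{g})$-module $M := V(A)^* \otimes_{U(\mathfrak{p})} U(\mathfrak{g})$ is free over $U(\mathfrak{m})$, so that Lemma~\ref{dizz} applies. This is the same PBW computation already used in the proof of Lemma~\ref{bronson}: the vector space decomposition $\mathfrak{g} = \mathfrak{p} \oplus \mathfrak{m}$ gives $U(\mathfrak{g}) \cong U(\mathfrak{p}) \otimes U(\mathfrak{m})$ as a $(U(\mathfrak{p}),U(\mathfrak{m}))$-bimodule, whence $M \cong V(A)^* \otimes U(\mathfrak{m})$ as a right $U(\mathfrak{m})$-module, which is free. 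Then Lemma~\ref{dizz} yields $\ann_{U(\mathfrak{g})} M = \ann_{U(\mathfrak{g})}(M^\#)$, and combining this with the display above gives exactly (\ref{mainidf}).

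There is no genuine obstacle here: the statement is a formal consequence of the two cited results, and all that needs verifying is that their hypotheses are met — which they are, since $V(A)$ is finite dimensional (it is the irreducible $\mathfrak{p}$-module of $\mathfrak{b}$-highest weight $\gamma(A) - \beta - \rho$ introduced in Section~\ref{s1d}) and $M$ is $U(\mathfrak{m})$-free by PBW. The real work was done earlier, in proving the identification $(V^* \otimes_{U(\mathfrak{p})} U(\mathfrak{g}))^\# \cong S_\chi(V)$ and in Lemma~\ref{dizz}; the present lemma simply packages those together in the form needed for Theorem~\ref{msup}, and hence for the first equality of Theorem~\ref{sep}.
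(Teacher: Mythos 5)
Your proof is correct and follows exactly the paper's intended route: the paper's proof is the one-line remark that the lemma follows from the definition (\ref{il}) together with the two preceding lemmas, which is precisely what you spell out (including the worthwhile explicit check via PBW that $V(A)^* \otimes_{U(\mathfrak{p})} U(\mathfrak{g})$ is $U(\mathfrak{m})$-free, so that Lemma~\ref{dizz} indeed applies).
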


\begin{proof}
This is a consequence of the previous two lemmas
and the definition (\ref{il}).
\end{proof}

It is a bit awkward at this point 
that the module on the left hand side of (\ref{mainidf}) 
is a right module.
We will get around this by twisting with a suitable anti-automorphism,
at the price of a
shift by the special weight $\beta$ from (\ref{betadef})
(and some temporary notational issues).
Observe that $\beta$ extends uniquely to a character of $\mathfrak{p}$.
Let $\C_{\beta}$ be the corresponding one dimensional left
$U(\mathfrak{p})$-module.

We need to 
work momentarily with a different pyramid $\pi^t$
associated to the transpose $\sigma^t$ of the shift matrix $\sigma$;
in other words $\pi^t$ is obtained from $\pi$
by reversing the order of the columns.
For example if
\begin{equation}
\pi=\begin{picture}(39,20)
\put(3,-16){\line(0,1){24}}
\put(15,-16){\line(0,1){36}}
\put(27,-16){\line(0,1){36}}
\put(39,-16){\line(0,1){12}}
\put(3,-16){\line(1,0){36}}
\put(3,-4){\line(1,0){36}}
\put(3,8){\line(1,0){24}}
\put(15,20){\line(1,0){12}}
\put(9,2){\makebox(0,0){$1$}}
\put(9,-10){\makebox(0,0){$2$}}
\put(21,14){\makebox(0,0){$3$}}
\put(21,2){\makebox(0,0){$4$}}
\put(21,-10){\makebox(0,0){$5$}}
\put(33,-10){\makebox(0,0){$6$}}
\end{picture}
\qquad\text{then}\qquad
\pi^t=\begin{picture}(39,20)
\put(3,-16){\line(0,1){12}}
\put(15,-16){\line(0,1){36}}
\put(27,-16){\line(0,1){36}}
\put(39,-16){\line(0,1){24}}
\put(3,-16){\line(1,0){36}}
\put(3,-4){\line(1,0){36}}
\put(15,8){\line(1,0){24}}
\put(27,20){\line(-1,0){12}}
\put(9,-10){\makebox(0,0){$1$}}
\put(21,14){\makebox(0,0){$2$}}
\put(21,2){\makebox(0,0){$3$}}
\put(21,-10){\makebox(0,0){$4$}}
\put(33,-10){\makebox(0,0){$6$}}
\put(33,2){\makebox(0,0){$5$}}
\end{picture}\,.\label{eg}
\end{equation}
\vspace{0.5mm}

\noindent
Let $\mathfrak{p}^t$ (resp.\ $e^t$, resp.\ $U(\mathfrak{g},e^t)$)
be 
defined in the same way as $\mathfrak{p}$ (resp.\ $e$, resp.\ $U(\mathfrak{g},e)$)
but starting from the pyramid $\pi^t$ instead of $\pi$.
If $A$ is any $\pi$-tableau, we 
obtain a $\pi^t$-tableau $A^t$ 
by reversing the order of the columns again. It makes sense to talk about 
$V(A^t)$, 
$V(A^t,e^t)$ and $L(A^t,e^t)$, which are 
$U(\mathfrak{p}^t)$- and $U(\mathfrak{g},e^t)$-modules.

Now
we define the appropriate anti-automorphism.
As usual label 
the boxes of $\pi$ in order down columns starting from the leftmost
column.
Let $i'$ be the
entry in the $i$th box of the tableau obtained by
writing the numbers $1,\dots,N$ into the boxes of $\pi$ working in order down
columns starting from the rightmost column; 
for example, in the situation of (\ref{eg}) we have that
$1'=5,2'=6,3'=2,4'=3,5'=4,6'=1$.
Let $t:U(\mathfrak{g}) \rightarrow U(\mathfrak{g})$
be the anti-automorphism with $t(e_{i,j}) = e_{j', i'}$.
Then we have that $t(e) = e^t$
and $t(\mathfrak{p}) = \mathfrak{p}^t$,
so $t$ restricts to an anti-isomorphism
$t:U(\mathfrak{p}) \stackrel{\sim}{\rightarrow} U(\mathfrak{p}^t)$.

\begin{Lemma}\label{booby1}
Suppose that $A$ is a column-strict $\pi$-tableau,
so that $A^t$ is a column-strict $\pi^t$-tableau.
The pull-back $t^*(V(A^t)^*)$ of the
right $U(\mathfrak{p}^t)$-module $V(A^t)^*$
is a left $U(\mathfrak{p})$-module isomorphic to
$\C_\beta \otimes V(A)$.
Hence we have that
\begin{equation}\label{pv0}
t^*(V(A^t)^* \otimes_{U(\mathfrak{p}^t)} U(\mathfrak{g}))
\cong
U(\mathfrak{g}) \otimes_{U(\mathfrak{p})} (\C_\beta \otimes V(A))
\end{equation}
as left $U(\mathfrak{g})$-modules.
\end{Lemma}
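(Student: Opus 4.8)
The plan is to reduce the first isomorphism to a comparison of $\mathfrak{b}$-highest weights, and then to deduce (\ref{pv0}) from a general ``induction commutes with pull-back along an anti-isomorphism'' fact. Write $\mathfrak{n}^+$ for the nilradical of $\mathfrak{b}$ and $\mathfrak{n}^-$ for the nilradical of the opposite Borel, and let $\mathfrak{u}$ (resp.\ $\mathfrak{u}^t$) and $\mathfrak{h}$ (resp.\ $\mathfrak{h}^t$) be the nilradical and Levi factor of $\mathfrak{p}$ (resp.\ $\mathfrak{p}^t$). First I would check that both $\C_\beta\otimes V(A)$ and $t^*(V(A^t)^*)$ are finite-dimensional irreducible left $U(\mathfrak{p})$-modules on which $\mathfrak{u}$ acts by zero. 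Irreducibility is clear for $\C_\beta\otimes V(A)$ (an irreducible $U(\mathfrak{p})$-module twisted by a character), while for $t^*(V(A^t)^*)$ it follows because the full linear dual of a finite-dimensional irreducible module is irreducible and pulling back along the anti-isomorphism $t\colon U(\mathfrak{p})\stackrel{\sim}{\rightarrow}U(\mathfrak{p}^t)$ preserves irreducibility. The vanishing of $\mathfrak{u}$ holds on $\C_\beta\otimes V(A)$ because $\mathfrak{u}\subseteq[\mathfrak{p},\mathfrak{p}]$ forces $\beta|_{\mathfrak{u}}=0$ and $\mathfrak{u}$ already kills $V(A)$, and on $t^*(V(A^t)^*)$ because $t(\mathfrak{u})=\mathfrak{u}^t$ and $\mathfrak{u}^t$ kills $V(A^t)$. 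Hence each module is inflated from an irreducible $\mathfrak{h}$-module, so is determined up to isomorphism by its $\mathfrak{b}$-highest weight, and it suffices to show these agree.

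The $\mathfrak{b}$-highest weight of $\C_\beta\otimes V(A)$ is $\beta+(\gamma(A)-\beta-\rho)=\gamma(A)-\rho$. For $t^*(V(A^t)^*)$, fix a $\mathfrak{b}$-highest weight vector $v_+\in V(A^t)$, of weight $\mu:=\gamma(A^t)-\beta^t-\rho$ (this is the highest weight of the $U(\mathfrak{p}^t)$-module $V(A^t)$, formed from $\pi^t$ exactly as the highest weight $\gamma(A)-\beta-\rho$ of $V(A)$ is formed from $\pi$; here $\beta^t$ is the weight $\beta$ of (\ref{betadef}) built from $\pi^t$, while $\rho$ is the pyramid-independent weight (\ref{rhodef})), and let $f_0\in V(A^t)^*$ be the dual functional vanishing on every weight space of $V(A^t)$ other than the top one. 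Let $w\in S_N$ be the permutation $i\mapsto i'$ used to define $t$, so that $t$ restricts on $\mathfrak{t}$ to $e_{i,i}\mapsto e_{w(i),w(i)}$. A short computation shows $f_0$ is a $\mathfrak{b}$-highest weight vector for the pulled-back action: we have $x\cdot f_0=f_0\cdot t(x)$, and $t(\mathfrak{n}^+)\subseteq\mathfrak{u}^t\oplus(\mathfrak{n}^-\cap\mathfrak{h}^t)$, the first summand killing $V(A^t)$ and the second strictly lowering weights, hence unable to produce the top weight that $f_0$ detects. The weight of $f_0$ under this action is $w^{-1}(\mu)$.

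It remains to check $w^{-1}(\mu)=\gamma(A)-\rho$, which splits into two combinatorial identities. The first is $w^{-1}\gamma(A^t)=\gamma(A)$: since $\pi^t$ and $A^t$ are obtained from $\pi$ and $A$ by reversing the columns, the entry of $A^t$ in the box carrying $\pi^t$-label $k$ coincides with the entry of $A$ in the box carrying $\pi$-label $w^{-1}(k)$, which is exactly $\gamma(A^t)_k=\gamma(A)_{w^{-1}(k)}$. The second amounts to $\beta^t=w\rho-\rho$: unwinding, with $c:=\col^t(k)$ one has $(\beta^t)_k=(q_1^t+\cdots+q_{c-1}^t)-(q_{c+1}^t+\cdots+q_l^t)$, whereas $(w\rho-\rho)_k=k-w^{-1}(k)$, and writing $k$ and $w^{-1}(k)$ in terms of $c$ and the within-column position of the box, both sides collapse to $(q_l+\cdots+q_{l+2-c})-(q_1+\cdots+q_{l-c})$ using $q_j^t=q_{l+1-j}$. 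This gives $t^*(V(A^t)^*)\cong\C_\beta\otimes V(A)$. Finally, (\ref{pv0}) follows by applying to $N=V(A^t)^*$ the general observation that for any right $U(\mathfrak{p}^t)$-module $N$ the assignment $u\otimes n\mapsto n\otimes t(u)$ is a well-defined isomorphism $U(\mathfrak{g})\otimes_{U(\mathfrak{p})}t^*(N)\stackrel{\sim}{\rightarrow}t^*\big(N\otimes_{U(\mathfrak{p}^t)}U(\mathfrak{g})\big)$ of left $U(\mathfrak{g})$-modules, with two-sided inverse $n\otimes u\mapsto t^{-1}(u)\otimes n$ (well-definedness and $U(\mathfrak{g})$-linearity being routine checks using that $t$ restricts to an anti-isomorphism $U(\mathfrak{p})\stackrel{\sim}{\rightarrow}U(\mathfrak{p}^t)$).

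The main obstacle is the bookkeeping in the third paragraph: coordinating the relabelling $w$, the two column-reading maps, and the two shift-dependent weights $\beta$ and $\beta^t$, while staying consistent about left versus right modules and about the fact that dualising does not negate weights here (because $t$ fixes $\mathfrak{t}$ pointwise). No individual step is deep, but several conventions must be pinned down simultaneously.
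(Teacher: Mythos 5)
Your proof is correct and follows essentially the same route as the paper: the paper establishes (\ref{pv0}) via exactly the map $u\otimes v\mapsto\theta(v)\otimes t(u)$ you describe, and leaves the module isomorphism $t^*(V(A^t)^*)\cong\C_\beta\otimes V(A)$ as ``a routine exercise in highest weight theory,'' which you have simply carried out (your identities $w^{-1}\gamma(A^t)=\gamma(A)$ and $\beta^t=w\rho-\rho$ check out, and match the example computation that appears in a commented-out block of the source). One small slip in your closing remarks: $t$ does \emph{not} fix $\mathfrak{t}$ pointwise --- it permutes the diagonal matrix units by $w$ --- but your actual argument correctly accounts for this by computing the weight of $f_0$ as $w^{-1}(\mu)$ rather than $\mu$, so the body of the proof is unaffected.
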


\begin{proof}
Suppose $M$ is a finite dimensional left $U(\mathfrak{p}^t)$-module
$M$ and
 we are given an isomorphism 
of left $U(\mathfrak{p})$-modules
$\theta:K \rightarrow t^*(M^*)$.
Then it is clear that the map
$U(\mathfrak{g}) \otimes_{U(\mathfrak{p})} K
\rightarrow
t^*(M^*\otimes_{U(\mathfrak{p}^t)} U(\mathfrak{g})),
u \otimes v \mapsto \theta(v) \otimes t(u)$
is an isomorphism. So the second part of the lemma follows
from the first part.
The first part is a routine exercise in highest weight theory.
\end{proof}

The module on the right hand side of (\ref{pv0})
is a parabolic Verma module attached to the parabolic $\mathfrak{p}$
in the usual sense.
Let us give it a special name: for a column-strict $\pi$-tableau $A$ we set
\begin{equation}\label{pv}
M(A) := U(\mathfrak{g}) \otimes_{U(\mathfrak{p})} (\C_\beta \otimes V(A)).
\end{equation}
This module has irreducible head
\begin{equation}\label{pv2}
L(A) := M(A) / \operatorname{rad} M(A).
\end{equation}
As $V(A)$ has highest weight $\gamma(A) - \beta - \rho$,
$L(A)$ is the usual irreducible highest weight module 
$L(\gamma(A))$ of highest weight $\gamma(A) - \rho$.

\begin{Theorem}\label{msup}
If $A$ is a column-separated $\pi$-tableau then
$$
I(L(A,e)) = 
\ann_{U(\mathfrak{g})} M(A).
$$
\end{Theorem}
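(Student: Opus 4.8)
The plan is to transport the annihilator identity of Lemma~\ref{mainid} across the anti-automorphism $t$ of the previous section, using Lemma~\ref{booby1} to turn the right module appearing there into the left module $M(A)$, and then to recognise the resulting primitive ideal by means of Theorem~\ref{labels}. To begin, a column-separated tableau is column-strict, and $A^t$ is again column-separated: reversing the order of the columns preserves column-strictness, and since two columns being linked depends only on their two sets of entries, it also preserves the property that no two columns are linked. By Lemma~\ref{booby1} we have $M(A) \cong t^*(V(A^t)^* \otimes_{U(\mathfrak p^t)} U(\mathfrak g))$ as left $U(\mathfrak g)$-modules; since $t$ is an anti-automorphism, passing to annihilators gives
$$
\ann_{U(\mathfrak g)} M(A) = t^{-1}\big(\ann_{U(\mathfrak g)}(V(A^t)^* \otimes_{U(\mathfrak p^t)} U(\mathfrak g))\big).
$$

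To evaluate the right-hand side, I would apply Lemma~\ref{mainid} to the pyramid $\pi^t$ and the column-strict tableau $A^t$, which identifies $\ann_{U(\mathfrak g)}(V(A^t)^* \otimes_{U(\mathfrak p^t)} U(\mathfrak g)) = I(V(A^t, e^t))$. Because $A^t$ is column-separated, Theorem~\ref{sep2} for the pyramid $\pi^t$ gives $V(A^t, e^t) \cong L(A^t, e^t)$, so this annihilator equals $I(L(A^t, e^t))$, and Theorem~\ref{labels} for $\pi^t$ rewrites it as $I(\rho(A^t))$, where $\rho$ now denotes the row reading of $\pi^t$-tableaux. (This is the one place where the column-separated hypothesis is used.)

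Next I would verify the elementary combinatorial identity $\rho(A^t) = \rho(A)$. To compute either weight one first row-standardises the tableau and then reads its entries along rows starting from the top; but row-standardising $A$ and row-standardising $A^t$ produce tableaux whose $i$th rows are the same increasingly sorted sequence of complex numbers for every $i$, since reversing the order of the columns does not change the multiset of entries in any row. Hence the two row readings coincide. Combining this with Theorem~\ref{labels} for the original pyramid $\pi$ yields $I(\rho(A^t)) = I(\rho(A)) = I(L(A,e))$, so at this stage we have shown $\ann_{U(\mathfrak g)} M(A) = t^{-1}(I(L(A,e)))$, and it remains only to check that $t$ stabilises this primitive ideal.

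For that I would factor $t = \tau \circ \Ad(w)$, where $\tau \colon e_{i,j} \mapsto e_{j,i}$ is the usual transpose anti-automorphism of $U(\mathfrak g)$ and $w \in W \subseteq GL_N(\C)$ is the permutation with $w(i) = i'$ that enters the definition of $t$. Both $\tau$ and $\Ad(w)$ fix every primitive ideal of $U(\mathfrak g)$: the transpose anti-automorphism does so because $\tau(\ann L(\alpha)) = \ann(L(\alpha)^\vee) = \ann L(\alpha)$, using that every simple module $L(\alpha)$ is isomorphic to its contravariant dual; and $\Ad(w)$ does so because it acts trivially on $Z(\mathfrak g)$, hence permutes the finite set $\operatorname{Prim}_\chi U(\mathfrak g)$ of primitive ideals of each fixed central character $\chi$, and a connected group such as $GL_N(\C)$ can only act trivially on a finite set. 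Therefore $t$, and hence $t^{-1}$, fixes $I(L(A,e))$, and we conclude $\ann_{U(\mathfrak g)} M(A) = I(L(A,e))$. The step I expect to need the most care is this last one --- correctly reading off the decomposition $t = \tau \circ \Ad(w)$ from the formula $t(e_{i,j}) = e_{j',i'}$ and then invoking the two standard facts that the transpose anti-automorphism and the inner automorphisms act trivially on $\operatorname{Prim} U(\mathfrak g)$; the rest is bookkeeping, mainly keeping straight which objects live over $\pi$ and which over $\pi^t$.
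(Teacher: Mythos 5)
Your overall strategy coincides with the paper's: apply Lemma~\ref{mainid} for the transposed pyramid $\pi^t$, use Theorem~\ref{sep2} to replace $V(A^t,e^t)$ by $L(A^t,e^t)$, pull back through the anti-automorphism $t$ via Lemma~\ref{booby1}, and finally observe that $t^{-1}$ fixes primitive ideals. However, the intermediate identity $\rho(A^t)=\rho(A)$, which you use to bridge between the $\pi^t$- and $\pi$-sides, is not true in general, and this is a genuine gap. Row-standardisation of a $\pi$-tableau is a stable bubble sort in each row: a pair $(a,b)$ is swapped only when $a>b$ in the partial order on $\C$ (where $a\geq b$ means $a-b\in\Z_{\geq 0}$), so the relative order of \emph{incomparable} entries (those in distinct cosets of $\Z$) is preserved, not re-sorted. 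Reversing columns reverses the order within each row; hence after row-standardisation, comparable entries line up the same way in $A'$ and $(A^t)'$, but incomparable ones sit in opposite order. For instance, the one-row tableau with entries $0,\tfrac12$ has $\rho(A)=0\,\eps_1+\tfrac12\,\eps_2$ while $\rho(A^t)=\tfrac12\,\eps_1+0\,\eps_2$. Your claim is correct only in the integral (single-coset) case, but the statement of Theorem~\ref{msup} does not restrict to that.

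The correct (and sufficient) statement, and the one the paper uses, is the weaker assertion $Q(\rho(A))\sim Q(\rho(A^t))$. This follows from Lemma~\ref{tr}: $Q_\pi(\rho(A))\sim A$ and $Q_{\pi^t}(\rho(A^t))\sim A^t$, and since $A$ and $A^t$ differ only by reversing columns, their left-justified versions have the same multiset in each row. Then (\ref{josephs}) gives $I(\rho(A))=I(\rho(A^t))$, and the rest of your argument goes through unchanged. Your final step is fine: the factorisation $t=\tau\circ\Ad(w)$ correctly reads off $t(e_{i,j})=e_{j',i'}$, and both factors fix $\Prim$ (the paper just cites \cite[5.2(2)]{Je}). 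One small simplification there: $\Ad(w)$ fixes \emph{every} two-sided ideal of $U(\mathfrak{g})$, since two-sided ideals are $\operatorname{ad}\mathfrak{g}$-stable and $G$ is connected, so there is no need for the finite-orbit argument with central characters.
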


\begin{proof}
We need to work with the finite $W$-algebra $U(\mathfrak{g}, e^t)$, notation
as introduced just before Lemma~\ref{booby1}.
Let $A$ be a column-separated $\pi$-tableau.
Then $A^t$ is a column-connected $\pi^t$-tableau, so
$V(A^t,e^t) \cong L(A^t,e^t)$ by Theorem~\ref{sep2}.
By Lemma~\ref{mainid} (for $\pi^t$ rather than $\pi$) 
we get that
$$
I(L(A^t,e^t)) = \ann_{U(\mathfrak{g})} (V(A^t)^* \otimes_{U(\mathfrak{p}^t)}
U(\mathfrak{g})).
$$
Note that $Q(\rho(A)) \sim Q(\rho(A^t))$
by Lemma~\ref{tr}, hence
$I(L(A,e)) = I(L(A^t,e^t))$ by Theorem~\ref{labels} and (\ref{josephs}).
Also Lemma~\ref{booby1} implies that
$$
\ann_{U(\mathfrak{g})}
(V(A^t)^* \otimes_{U(\mathfrak{p}^t)} U(\mathfrak{g}))
=
t(\ann_{U(\mathfrak{g})}
M(A)).$$
So we have established
that 
$
I(L(A,e)) = 
t(\ann_{U(\mathfrak{g})} M(A))$
or equivalently
$$
t^{-1}(I(L(A,e))) = \ann_{U(\mathfrak{g})} M(A).
$$
It remains to observe for any $I \in \Prim$
that $t^{-1}(I) = I$; this follows 
from \cite[5.2(2)]{Je} on noting that $t^{-1}$ is equal to the usual Chevalley
anti-automorphism up to composing with an inner automorphism.
\end{proof}

\section{Irreducible modules and Whittaker coinvariants}\label{sco}

In this section we recall
the construction of the finite dimensional irreducible
left $U(\mathfrak{g},e)$-modules from \cite[$\S$8.5]{BKrep}
by taking Whittaker coinvariants in certain irreducible 
highest weight modules for $\mathfrak{g}$.
Before we can begin,
we need to modify
the definition (\ref{coinv}),
since we want now to use the coinvariant Whittaker functor
in the context of left modules.
Actually both of 
the definitions (\ref{inv})--(\ref{coinv}) are rather asymmetric with respect
to left and right modules.
The reason for this goes back to the original definition 
of the finite $W$-algebra from
(\ref{fw}): one could just as naturally consider
\begin{equation}\label{fw2}
\overline{U}(\mathfrak{g},e) := \{u \in U(\mathfrak{p})\:|\:
u \mathfrak{m}_\chi \subseteq 
\mathfrak{m}_\chi U(\mathfrak{g})\}.
\end{equation}
We call this the {\em opposite finite $W$-algebra}
since there is an {\em anti-isomorphism}
between $\overline{U}(\mathfrak{g},e)$ and $U(\mathfrak{g},e)$.
More precisely, let $U(\mathfrak{g},-e)$ be defined
exactly as in (\ref{fw}) but with $e$
replaced by $-e$ (hence $\chi$ replaced by $-\chi$).
The antipode $S:U(\mathfrak{g}) \rightarrow 
U(\mathfrak{g})$ sending $x \mapsto -x$ for each $x \in \mathfrak{g}$
obviously 
sends $\overline{U}(\mathfrak{g},e)$ to $U(\mathfrak{g},-e)$, and then
$U(\mathfrak{g},-e)$ is isomorphic to $U(\mathfrak{g},e)$
since $-e$ is conjugate to $e$.
Composing, we get an anti-isomorphism
$\overline{U}(\mathfrak{g},e) \stackrel{\sim}{\rightarrow} U(\mathfrak{g},e)$.

Using this anti-isomorphism, it is rather 
routine to deduce opposite versions of most of the results in $\S$\ref{swhitt}
with $U(\mathfrak{g},e)$ replaced by $\overline{U}(\mathfrak{g},e)$.
For example, the opposite versions
of the functors (\ref{inv})--(\ref{coinv}) are functors
\begin{align}\label{bup}
\overline{H}^0(\mathfrak{m}_\chi,?):\rmod U(\mathfrak{g}) 
&\rightarrow \rmod\overline{U}(\mathfrak{g},e),
&M &\mapsto 
\{v \in M\:|\:v \mathfrak{m}_\chi = \bz\},\\
\overline{H}_0(\mathfrak{m}_\chi,?):U(\mathfrak{g})\lmod 
&\rightarrow \overline{U}(\mathfrak{g},e)\lmod, &M &\mapsto 
M / \mathfrak{m}_\chi M.\label{bdown}
\end{align}
The first of these functors gives an equivalence between
$\rmod (U(\mathfrak{g}),\mathfrak{m}_\chi)$ 
and $\rmod \overline{U}(\mathfrak{g},e)$,
where
$\rmod (U(\mathfrak{g}),\mathfrak{m}_\chi)$ is the full subcategory
of
$\rmod U(\mathfrak{g})$ consisting of all 
modules that are locally nilpotent over $\mathfrak{m}_\chi$ (the opposite version of Skryabin's theorem).
Defining $\#:U(\mathfrak{g})\lmod \rightarrow \rmod (U(\mathfrak{g}),\mathfrak{m}_\chi)$
in the oppposite way to in $\S$\ref{swhitt},
the second of these functors satisfies
\begin{equation}\label{duzz}
\overline{H}_0(\mathfrak{m}_\chi, M)
\cong \overline{H}^0(\mathfrak{m}_\chi, M^\#)^*
\end{equation}
for any left $U(\mathfrak{g})$-module $M$ that is finitely generated over $\mathfrak{m}$ (the opposite version of Theorem~\ref{altdef}).

Less obviously, there is also a canonical
{\em isomorphism} between $U(\mathfrak{g},e)$ and
$\overline{U}(\mathfrak{g},e)$.
To record this, recall that
the weight $\beta$ from (\ref{betadef}) extends uniquely
to a character of $\mathfrak{p}$.
The following theorem was
proved originally (in Cartan type $A$ only) by explicit computation in
\cite[Lemma 3.1]{BKrep}, but we cite instead a more conceptual
proof found subsequently (which is valid in all Cartan types).

\begin{Theorem}[{\cite[Corollary 2.9]{BGK}}]\label{twist}
The automorphisms 
$S_{\pm\beta}:U(\mathfrak{p}) \rightarrow U(\mathfrak{p})$
sending $x \in \mathfrak{p}$ to $x \pm \beta(x)$
restrict to mutually inverse isomorphisms
$$
S_\beta:\overline{U}(\mathfrak{g},e)\stackrel{\sim}{\rightarrow}
U(\mathfrak{g},e),\qquad
S_{-\beta}:U(\mathfrak{g},e)\stackrel{\sim}{\rightarrow}
\overline{U}(\mathfrak{g},e).
$$
\end{Theorem}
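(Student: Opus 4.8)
The plan is to prove the single inclusion $S_\beta\big(\overline{U}(\mathfrak{g},e)\big)\subseteq U(\mathfrak{g},e)$ and then to upgrade it to the asserted equality by a dimension count; the companion statement $S_{-\beta}\big(U(\mathfrak{g},e)\big)\subseteq\overline{U}(\mathfrak{g},e)$ then comes for free since $S_{-\beta}=S_\beta^{-1}$, and then $S_\beta$ and $S_{-\beta}$ restrict to mutually inverse isomorphisms. For the dimension count, note that $S_\beta$ sends each $x\in\mathfrak{p}$ to $x+\beta(x)$, so it is a filtered automorphism of $U(\mathfrak{p})$ for the standard PBW filtration; on the other hand, by the anti-isomorphism $\overline{U}(\mathfrak{g},e)\stackrel{\sim}{\rightarrow}U(\mathfrak{g},e)$ constructed above (which one checks preserves the PBW filtration, the antipode and the conjugation taking $-e$ to $e$ both being degree-preserving), $U(\mathfrak{g},e)$ and $\overline{U}(\mathfrak{g},e)$ have the same dimension in each filtration degree. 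Hence an inclusion $S_\beta\big(\overline{U}(\mathfrak{g},e)_{\leq k}\big)\subseteq U(\mathfrak{g},e)_{\leq k}$ is an equality for every $k$, giving equality of the two subalgebras. (Alternatively one may skip the dimension count and prove both inclusions by the same computation.)

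The substance is therefore the inclusion, and everything rests on the meaning of $\beta$. First I would record that $\beta$ is the unique character of $\mathfrak{p}$ whose restriction to $\mathfrak{h}=\mathfrak{g}(0)$ is $x\mapsto\tr\big(\operatorname{ad}x|_{\mathfrak{m}}\big)$; this is a one-line check against (\ref{betadef}). Thus $\beta$ is the modular character of the adjoint action of $\mathfrak{p}$ on $\mathfrak{m}$ (equivalently, the character by which $\mathfrak{p}$ acts on $\bigwedge^{\mathrm{top}}\mathfrak{m}$, or on $\mathfrak{g}/\mathfrak{p}$). The content of the theorem is that the two copies of the finite $W$-algebra inside $U(\mathfrak{p})$ produced by the PBW factorizations $U(\mathfrak{g})=U(\mathfrak{p})U(\mathfrak{m})=U(\mathfrak{m})U(\mathfrak{p})$, namely $U(\mathfrak{g},e)$ read off from $U(\mathfrak{g})/U(\mathfrak{g})\mathfrak{m}_\chi\cong U(\mathfrak{p})$ and $\overline{U}(\mathfrak{g},e)$ read off from $U(\mathfrak{g})/\mathfrak{m}_\chi U(\mathfrak{g})\cong U(\mathfrak{p})$, are interchanged precisely by the twist $S_\beta$. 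Concretely, given $u\in U(\mathfrak{p})$ with $u\mathfrak{m}_\chi\subseteq\mathfrak{m}_\chi U(\mathfrak{g})$ and $a\in\mathfrak{m}$, I would reorder $(a-\chi(a))\,S_\beta(u)$ by moving $\mathfrak{m}$-factors to the right past $\mathfrak{p}$-factors; the brackets produced land in $[\mathfrak{m},\mathfrak{p}]\subseteq\mathfrak{m}\oplus\mathfrak{h}$, and the net contribution of the $\mathfrak{h}$-valued corrections accumulated in this process is exactly what the $\beta$-shift in $S_\beta(u)$ is designed to cancel, so that the reordered expression lies in $U(\mathfrak{g})\mathfrak{m}_\chi$ by hypothesis. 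A quick sanity check is $\mathfrak{gl}_2$ with $e$ regular, where $U(\mathfrak{g},e)$ and $\overline{U}(\mathfrak{g},e)$ are the images of $Z(\mathfrak{g})$ under the two projections: the $\beta$-shift is invisible on $e_{1,1}+e_{2,2}$ but visibly carries the image of the Casimir in $\overline{U}(\mathfrak{g},e)$ onto its image in $U(\mathfrak{g},e)$.

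Making that reordering argument precise in general is the main obstacle. In Cartan type $A$ it becomes a finite verification: $U(\mathfrak{g},e)$ and $\overline{U}(\mathfrak{g},e)$ are generated inside $U(\mathfrak{p})$ by the explicit elements $D_i^{(r)},E_i^{(r)},F_i^{(r)}$ of \cite{BKshifted} and their opposites, so it suffices to evaluate $S_{\pm\beta}$ on these generators, which (as $\beta$ is a character) amounts to tracking how scalar shifts propagate through the shifted-Yangian presentation of \cite{BKshifted}, and then to check the relations; this computation is \cite[Lemma 3.1]{BKrep}, which I would simply quote. For a Cartan-type-free proof I would instead realize $U(\mathfrak{g},e)$ and $\overline{U}(\mathfrak{g},e)$ as the endomorphism algebras of the two Gelfand--Graev modules $U(\mathfrak{g})/U(\mathfrak{g})\mathfrak{m}_\chi$ and $U(\mathfrak{g})/\mathfrak{m}_\chi U(\mathfrak{g})$ of \cite{GG} and construct the $\beta$-twisted isomorphism between these modules via Frobenius reciprocity (Poincar\'e duality for the finite-codimensional pair $\mathfrak{m}\subseteq\mathfrak{g}$), the twist being governed by $\bigwedge^{\mathrm{top}}\mathfrak{m}$; this is the route of \cite[Corollary 2.9]{BGK}.
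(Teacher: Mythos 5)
Your proposal is correct and in the end rests on exactly the same two sources the paper itself invokes: the paper states the theorem as a citation to \cite[Corollary 2.9]{BGK}, noting that the original type-$A$ proof is the explicit computation in \cite[Lemma 3.1]{BKrep}, and after your framing you likewise quote precisely these. The extra scaffolding you supply — recognizing $\beta$ as the modular character $x\mapsto\tr(\operatorname{ad}x|_{\mathfrak m})$ of $\mathfrak p$ on $\mathfrak m$, and reducing to a single inclusion via a filtered dimension count (which is sound, since $S_\beta$ is a filtered automorphism and the antipode composed with conjugation by a grading-preserving $g$ with $\operatorname{Ad}(g)(-e)=e$ gives a filtration-preserving anti-isomorphism) — is correct and clarifies why $\beta$ is the right weight, but it does not replace the cited verifications, so this is essentially the paper's approach with added motivation.
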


We get an isomorphism of categories
$S_{-\beta}^*:\overline{U}(\mathfrak{g},e)\lmod
\rightarrow U(\mathfrak{g},e)\lmod$
by pulling back the action
through $S_{-\beta}$.
Composing with $S_{-\beta}^*$, we will always from now on view the functors
(\ref{bup})--(\ref{bdown}) as functors
\begin{align}\label{bup2}
\overline{H}^0(\mathfrak{m}_\chi,?):\rmod U(\mathfrak{g}) 
&\rightarrow \rmod U(\mathfrak{g},e),\\
\overline{H}_0(\mathfrak{m}_\chi,?):U(\mathfrak{g})\lmod 
&\rightarrow U(\mathfrak{g},e)\lmod.\label{bdown2}
\end{align}
Of course we are abusing notation here, but we won't mention $\overline{U}(\mathfrak{g},e)$ again so there should be 
no confusion.

Now let $\mathcal O_\pi$ be the parabolic category $\mathcal O$
consisting of finitely generated $\mathfrak{g}$-modules that are locally
finite over $\mathfrak{p}$ and semisimple over $\mathfrak{h}$.
The basic objects in $\mathcal O_\pi$ are the parabolic Verma
modules $M(A)$ and their irreducible quotients $L(A)$
from (\ref{pv})--(\ref{pv2}).
Recall that both of these modules are of highest weight $\gamma(A)-\rho$.
 
\begin{Lemma}\label{exactagain}
The restriction of the functor $\overline{H}(\mathfrak{m}_\chi,?)$ to $\mathcal{O}_\pi$ is exact
and it sends modules in $\mathcal{O}_\pi$ to finite dimensional
left $U(\mathfrak{g},e)$-modules.
\end{Lemma}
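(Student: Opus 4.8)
The plan is to reduce everything to the opposite versions of the results on Whittaker coinvariants already assembled in $\S$\ref{swhitt}, the \emph{only} genuinely new input being that every object of $\mathcal{O}_\pi$ is finitely generated as a $U(\mathfrak{m})$-module. Recall that the functor in question, namely $\overline{H}_0(\mathfrak{m}_\chi,?)$ from (\ref{bdown2}), sends a left $U(\mathfrak{g})$-module $M$ to $M/\mathfrak{m}_\chi M$ with the $U(\mathfrak{g},e)$-action obtained from the natural $\overline{U}(\mathfrak{g},e)$-action of $M/\mathfrak{m}_\chi M$ by pulling back along the algebra isomorphism $S_{-\beta}$; since this is an isomorphism, neither the dimension nor the exactness of the functor is affected by the twist, so one may argue with $M/\mathfrak{m}_\chi M$ directly.

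The first step is the finite generation claim. Take $M \in \mathcal{O}_\pi$, finitely generated over $U(\mathfrak{g})$ by a finite set $X$ and locally finite over $\mathfrak{p}$, so the subspace $V := U(\mathfrak{p})X \subseteq M$ is finite dimensional. Since $\mathfrak{g} = \mathfrak{m} \oplus \mathfrak{p}$ as a vector space, the PBW theorem gives $U(\mathfrak{g}) = U(\mathfrak{m})\,U(\mathfrak{p})$, whence $M = U(\mathfrak{g})X = U(\mathfrak{m})\,U(\mathfrak{p})X = U(\mathfrak{m})V$; thus $M$ is finitely generated over $U(\mathfrak{m})$. With this in hand the finite-dimensionality assertion is immediate from the opposite version of Lemma~\ref{fin} (which, like Lemma~\ref{fin} itself, is clear straight from the definition of the coinvariant functor as $M/\mathfrak{m}_\chi M$). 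Moreover any short exact sequence in $\mathcal{O}_\pi$ is in particular a short exact sequence of $U(\mathfrak{g})$-modules whose three terms all lie in $\mathcal{O}_\pi$, hence are all finitely generated over $\mathfrak{m}$ by the preceding remark.

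For exactness I would invoke the opposite version of Theorem~\ref{altdef}, namely the natural isomorphism (\ref{duzz}) $\overline{H}_0(\mathfrak{m}_\chi, M) \cong \overline{H}^0(\mathfrak{m}_\chi, M^\#)^*$, valid for $M$ finitely generated over $\mathfrak{m}$. On a short exact sequence in $\mathcal{O}_\pi$ this identifies the coinvariant functor with the composite of three exact functors, exactly as in the proof of Corollary~\ref{lynchc}: the dualization $?^\#$ (exact by the opposite of Lemma~\ref{exact}), the Whittaker invariants functor $\overline{H}^0(\mathfrak{m}_\chi,?)$ (exact, being an equivalence by the opposite of Skryabin's theorem), and the full linear dual $?^*$. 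Equivalently, one may simply quote the opposite version of Corollary~\ref{lynchc} directly, together with the finite generation over $\mathfrak{m}$ established in the first step. I do not expect any serious obstacle here: the argument is entirely formal once the opposite versions of the $\S$\ref{swhitt} statements are in place, and the only point requiring a moment's care is the finite generation of $\mathcal{O}_\pi$-objects over $\mathfrak{m}$ (together with the harmless bookkeeping around the $S_{-\beta}$-twist in (\ref{bdown2})).
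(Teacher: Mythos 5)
Your proof is correct and follows essentially the same overall route as the paper: establish that every object of $\mathcal{O}_\pi$ is finitely generated over $\mathfrak{m}$, then invoke the opposite version of Corollary~\ref{lynchc} (equivalently, unwind it via $?^\#$, Skryabin, and the linear dual). The only place where you diverge is the finite generation step. The paper argues via a composition series: every object of $\mathcal{O}_\pi$ has one with factors $L(A)$, and each $L(A)$ is a quotient of $M(A) = U(\mathfrak{g})\otimes_{U(\mathfrak{p})}(\C_\beta\otimes V(A))$, which by PBW is free of finite rank over $U(\mathfrak{m})$. You instead read the claim directly off the axioms of $\mathcal{O}_\pi$: a finite generating set $X$ has $V := U(\mathfrak{p})X$ finite dimensional by local $\mathfrak{p}$-finiteness, and PBW in the form $U(\mathfrak{g}) = U(\mathfrak{m})\,U(\mathfrak{p})$ gives $M = U(\mathfrak{m})V$. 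Your argument is a touch more elementary in that it avoids appealing to the existence of finite composition series in $\mathcal{O}_\pi$, but both are standard and the difference is cosmetic.
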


\begin{proof}
Every module in $\mathcal O_\pi$ has a composition series
with composition factors of the form $L(A)$ for various column-strict 
$\pi$-tableaux $A$. Since $L(A)$ is a quotient of $M(A)$ 
it is clearly finitely generated as an $\mathfrak{m}$-module.
Hence
every object in $\mathcal O_\pi$ is finitely generated over $\mathfrak{m}$
and we are done 
by the opposite version of Corollary~\ref{lynchc}.
\end{proof}

\begin{Lemma}\label{gform}
For a column-strict $\pi$-tableau $A$,
we have that
\begin{equation*}
\overline{H}_0(\mathfrak{m}_\chi, M(A))
\cong V(A,e)
\end{equation*}
as left $U(\mathfrak{g},e)$-modules.
\end{Lemma}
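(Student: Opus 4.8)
The plan is to unwind the definition (\ref{pv}) of the parabolic Verma module $M(A)$, compute $M(A)/\mathfrak{m}_\chi M(A)$ by a Poincar\'e--Birkhoff--Witt argument — this being essentially the opposite version of Lemma~\ref{bronson} — and then keep track of the shift by $\beta$ that is built into the normalization (\ref{bdown})--(\ref{bdown2}) of the coinvariant Whittaker functor.

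First I would note that, since $\mathfrak{g} = \mathfrak{m} \oplus \mathfrak{p}$, the PBW theorem shows $M(A) = U(\mathfrak{g}) \otimes_{U(\mathfrak{p})} (\C_\beta \otimes V(A))$ is free as a left $U(\mathfrak{m})$-module, with $M(A) \cong U(\mathfrak{m}) \otimes (\C_\beta \otimes V(A))$. Twisting $U(\mathfrak{m})$ by the automorphism $x \mapsto x + \chi(x)$ (for $x \in \mathfrak{m}$) identifies $\mathfrak{m}_\chi U(\mathfrak{m})$ with the augmentation ideal $\mathfrak{m} U(\mathfrak{m})$, which has codimension one; hence $\mathfrak{m}_\chi M(A)$ has codimension $\dim V(A)$ in $M(A)$, and the linear map $\C_\beta \otimes V(A) \to M(A)/\mathfrak{m}_\chi M(A)$ sending $v$ to the image of $1 \otimes v$ is a vector space isomorphism. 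Exactly as in the proof of Lemma~\ref{bronson}, for $u$ in the opposite finite $W$-algebra $\overline{U}(\mathfrak{g},e) \subseteq U(\mathfrak{p})$ of (\ref{fw2}) this map carries $uv$ (computed in the $U(\mathfrak{p})$-module $\C_\beta \otimes V(A)$) to $u$ applied to the image of $1 \otimes v$, using that $\overline{U}(\mathfrak{g},e)$ preserves $\mathfrak{m}_\chi M(A)$ by (\ref{fw2}). Thus it is an isomorphism $M(A)/\mathfrak{m}_\chi M(A) \cong (\C_\beta \otimes V(A))|_{\overline{U}(\mathfrak{g},e)}$ of left $\overline{U}(\mathfrak{g},e)$-modules (the opposite version of Lemma~\ref{bronson}).

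Next I would pull back through the isomorphism $S_{-\beta}\colon U(\mathfrak{g},e) \stackrel{\sim}{\to} \overline{U}(\mathfrak{g},e)$ of Theorem~\ref{twist}, as prescribed by the definition of $\overline{H}_0(\mathfrak{m}_\chi,?)$ in (\ref{bdown})--(\ref{bdown2}): so $\overline{H}_0(\mathfrak{m}_\chi, M(A)) = S_{-\beta}^*(M(A)/\mathfrak{m}_\chi M(A)) \cong S_{-\beta}^*((\C_\beta \otimes V(A))|_{\overline{U}(\mathfrak{g},e)})$. Since $S_{-\beta}$ extends $x \mapsto x - \beta(x)$ on $\mathfrak{p}$, and $x \in \mathfrak{p}$ acts on $\C_\beta \otimes V(A)$ as the scalar $\beta(x)$ plus its action on $V(A)$, the element $S_{-\beta}(x)$ acts there simply by the $V(A)$-action; hence $S_{-\beta}^*(\C_\beta \otimes V(A)) \cong V(A)$ as $U(\mathfrak{p})$-modules. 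As $S_{-\beta}$ restricts to the chosen isomorphism $U(\mathfrak{g},e) \to \overline{U}(\mathfrak{g},e)$, pull-back commutes with restriction, so $S_{-\beta}^*((\C_\beta \otimes V(A))|_{\overline{U}(\mathfrak{g},e)}) \cong V(A)|_{U(\mathfrak{g},e)} = V(A,e)$, which is what we want.

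The argument is essentially formal: $M(A)$ is free over $\mathfrak{m}$, so Lemma~\ref{dizz}-type considerations apply, and no representation-theoretic input beyond PBW is needed. The only points demanding care are the bookkeeping with the $\beta$-twist and the passage between $U(\mathfrak{g},e)$ and $\overline{U}(\mathfrak{g},e)$, together with the (routine) codimension count for $\mathfrak{m}_\chi M(A)$ that upgrades the PBW identification from a surjection to an isomorphism.
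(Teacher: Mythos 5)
Your argument is correct and is exactly the paper's proof, just spelled out in more detail: the paper likewise invokes the opposite version of Lemma~\ref{bronson} (the PBW identification) to get $M(A)/\mathfrak{m}_\chi M(A)\cong(\C_\beta\otimes V(A))|_{\overline{U}(\mathfrak{g},e)}$ and then cancels the $\beta$-shift via $S_{-\beta}^*$ to land on $V(A,e)$.
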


\begin{proof}
By the definition of $M(A)$ and the opposite version of
Lemma~\ref{bronson}, we that
$\overline{H}_0(\mathfrak{m}_\chi, M(A))
\cong S_{-\beta}^*(\C_\beta\otimes V(A,e)) \cong V(A,e)$.
\end{proof}

Call a $\pi$-tableau $A$ {\em semi-standard}
if it is column-strict and $\gamma(A) \in \mathfrak{t}^*_\lambda$,
i.e. $Q(\gamma(A))$ has shape $\lambda$.
In the left-justified case, it is an easy exercise to check that
$A$ is semi-standard if and only if $A$ is both column-strict and
row-standard, which hopefully justifies our choice of language.
In other cases the 
semi-standard $\pi$-tableaux are harder to characterize
from a combinatorial point of view.
For example, here are all the semi-standard $\pi$-tableaux
for one particular $\pi$ with entries $1,2,3,3,4,4$:
\begin{align*}
\phantom{Q_\pi(\gamma())}
A &= \begin{picture}(39,22)
\put(3,-16){\line(1,0){36}}
\put(3,-4){\line(1,0){36}}
\put(3,8){\line(1,0){24}}
\put(15,20){\line(1,0){12}}
\put(3,-16){\line(0,1){24}}
\put(15,-16){\line(0,1){36}}
\put(27,-16){\line(0,1){36}}
\put(39,-16){\line(0,1){12}}
\put(9,-10){\makebox(0,0){$2$}}
\put(21,-10){\makebox(0,0){$1$}}
\put(33,-10){\makebox(0,0){$4$}}
\put(9,2){\makebox(0,0){$3$}}
\put(21,2){\makebox(0,0){$3$}}
\put(21,14){\makebox(0,0){$4$}}
\end{picture}
&
\phantom{Q_\pi(\gamma())}
B &= \begin{picture}(39,22)
\put(3,-16){\line(1,0){36}}
\put(3,-4){\line(1,0){36}}
\put(3,8){\line(1,0){24}}
\put(15,20){\line(1,0){12}}
\put(3,-16){\line(0,1){24}}
\put(15,-16){\line(0,1){36}}
\put(27,-16){\line(0,1){36}}
\put(39,-16){\line(0,1){12}}
\put(9,-10){\makebox(0,0){$2$}}
\put(21,-10){\makebox(0,0){$1$}}
\put(33,-10){\makebox(0,0){$3$}}
\put(9,2){\makebox(0,0){$4$}}
\put(21,2){\makebox(0,0){$3$}}
\put(21,14){\makebox(0,0){$4$}}
\end{picture}
& 
\phantom{Q_\pi(\gamma())}
C &= \begin{picture}(39,22)
\put(3,-16){\line(1,0){36}}
\put(3,-4){\line(1,0){36}}
\put(3,8){\line(1,0){24}}
\put(15,20){\line(1,0){12}}
\put(3,-16){\line(0,1){24}}
\put(15,-16){\line(0,1){36}}
\put(27,-16){\line(0,1){36}}
\put(39,-16){\line(0,1){12}}
\put(9,-10){\makebox(0,0){$1$}}
\put(21,-10){\makebox(0,0){$2$}}
\put(33,-10){\makebox(0,0){$4$}}
\put(9,2){\makebox(0,0){$3$}}
\put(21,2){\makebox(0,0){$3$}}
\put(21,14){\makebox(0,0){$4$}}
\end{picture}\:.
\end{align*}
\vspace{0.5mm}

\noindent
To illustrate the next lemma, we note for 
these that
\begin{align*}
Q_\pi(\gamma(A)) &\sim \begin{picture}(39,22)
\put(3,-16){\line(1,0){36}}
\put(3,-4){\line(1,0){36}}
\put(3,8){\line(1,0){24}}
\put(15,20){\line(1,0){12}}
\put(3,-16){\line(0,1){24}}
\put(15,-16){\line(0,1){36}}
\put(27,-16){\line(0,1){36}}
\put(39,-16){\line(0,1){12}}
\put(9,-10){\makebox(0,0){$3$}}
\put(21,-10){\makebox(0,0){$1$}}
\put(33,-10){\makebox(0,0){$4$}}
\put(9,2){\makebox(0,0){$4$}}
\put(21,2){\makebox(0,0){$2$}}
\put(21,14){\makebox(0,0){$3$}}
\end{picture}
&Q_\pi(\gamma(B)) &\sim \begin{picture}(39,22)
\put(3,-16){\line(1,0){36}}
\put(3,-4){\line(1,0){36}}
\put(3,8){\line(1,0){24}}
\put(15,20){\line(1,0){12}}
\put(3,-16){\line(0,1){24}}
\put(15,-16){\line(0,1){36}}
\put(27,-16){\line(0,1){36}}
\put(39,-16){\line(0,1){12}}
\put(9,-10){\makebox(0,0){$3$}}
\put(21,-10){\makebox(0,0){$1$}}
\put(33,-10){\makebox(0,0){$3$}}
\put(9,2){\makebox(0,0){$4$}}
\put(21,2){\makebox(0,0){$2$}}
\put(21,14){\makebox(0,0){$4$}}
\end{picture}
& Q_\pi(\gamma(C)) &\sim \begin{picture}(39,22)
\put(3,-16){\line(1,0){36}}
\put(3,-4){\line(1,0){36}}
\put(3,8){\line(1,0){24}}
\put(15,20){\line(1,0){12}}
\put(3,-16){\line(0,1){24}}
\put(15,-16){\line(0,1){36}}
\put(27,-16){\line(0,1){36}}
\put(39,-16){\line(0,1){12}}
\put(9,-10){\makebox(0,0){$1$}}
\put(21,-10){\makebox(0,0){$2$}}
\put(33,-10){\makebox(0,0){$4$}}
\put(9,2){\makebox(0,0){$3$}}
\put(21,2){\makebox(0,0){$3$}}
\put(21,14){\makebox(0,0){$4$}}
\end{picture}\:.
\end{align*}

\vspace{4mm}

\noindent
Two semi-standard $\pi$-tableaux $A$ and $B$ are {\em parallel},
denoted $A \parallel B$, if one is obtained from the other by a sequence
of transpositions of
pairs of columns of the same height whose entries lie in different cosets of
$\C$ modulo $\Z$.

\begin{Lemma}\label{rect}
There is a unique map $R$ making the following into a commuting diagram
of bijections:
\begin{align*}
\left\{\begin{array}{l}
\text{parallel classes of}\\\text{semi-standard $\pi$-tableaux}
\end{array}
\right\}\\
&\qquad\lPrim.\\
\left\{
\begin{array}{l}
\text{row-equivalence classes of}\\\text{column-strict $\pi$-tableaux}
\end{array}
\right\} 
\begin{picture}(0,0)
\put(11,44){\makebox(0,0){$\searrow$}}
\put(37,50){\makebox(0,0){$\scriptstyle[A] \mapsto I(\gamma(A))$}}
\put(11,12){\makebox(0,0){$\nearrow$}}
\put(37,6){\makebox(0,0){$\scriptstyle[B] \mapsto I(\rho(B))$}}
\put(-75,26){\makebox(0,0){${\scriptstyle R} {\Big\downarrow}$}}
\end{picture}
\end{align*}
More explicitly, $R$ maps $[A]$ to $[B]$ where $B$ is any column-strict
$\pi$-tableau such that $B \sim Q_\pi(\gamma(A))$.
In the special case that $\pi$ is left-justified
(when
a $\pi$-tableau is semi-standard if and only if it is
both column-strict and row-standard)
the map $R$
is induced by the natural inclusion
of semi-standard $\pi$-tableaux into column-strict $\pi$-tableaux. 
\end{Lemma}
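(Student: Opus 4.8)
The plan is to recognise the two maps in the statement as bijections onto $\lPrim$; once this is done $R$ is forced to be their composite and everything else follows with little extra effort. Consider first the lower map $[B]\mapsto I(\rho(B))$. By Theorem~\ref{labels}, $I(\rho(B))=I(L(B,e))$ for every $\pi$-tableau $B$, and since $L(B,e)\cong L(B',e)$ exactly when $B\sim B'$ this descends to a well-defined map on row-equivalence classes. By Theorem~\ref{fdc} the modules $L(B,e)$, as $[B]$ runs over row-equivalence classes of column-strict $\pi$-tableaux, exhaust the finite dimensional irreducible left $U(\mathfrak{g},e)$-modules up to isomorphism without repetition, and by Corollary~\ref{lbij} the assignment $[L]\mapsto I(L)$ is a bijection onto $\lPrim$; so the lower map is a bijection. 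For the upper map $[A]\mapsto I(\gamma(A))$, semi-standardness of $A$ gives $\gamma(A)\in\mathfrak{t}^*_\lambda$, so $I(\gamma(A))\in\lPrim$ by Corollary~\ref{jcor}. Since the lower map is injective there is at most one $R$ making the triangle commute, namely the composite of the upper map with the inverse of the lower map; and $R$ exists and is a bijection exactly when the upper map descends to a bijection on parallel classes.

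The explicit description of $R$ is then essentially formal. Given semi-standard $A$, the weight $\gamma(A)\in\mathfrak{t}^*_\lambda$, so $Q_\pi(\gamma(A))$ is defined and, by the remark following its definition, row-equivalent to some column-strict $\pi$-tableau $B$. Applying Theorem~\ref{labels2} with $\alpha=\gamma(A)$ and then Theorem~\ref{labels} gives $I(\gamma(A))=I(L(B,e))=I(\rho(B))$, which says precisely that $B$ represents the lower-map value of $R([A])$; hence $R([A])=[B]$ with $B\sim Q_\pi(\gamma(A))$. When $\pi$ is left-justified a semi-standard $\pi$-tableau is exactly a column-strict, row-standard one, and the classical fact that the Schensted insertion tableau of the column reading word of such a tableau is the tableau itself gives $Q_\pi(\gamma(A))=A$; so there $R$ is induced by the inclusion of semi-standard into column-strict $\pi$-tableaux.

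What remains is to show the upper map descends to a bijection on parallel classes, and this is where the real work lies. By~(\ref{josephs}) the content is the combinatorial statement that for semi-standard $\pi$-tableaux $A$ and $A'$ one has $Q(\gamma(A))\sim Q(\gamma(A'))$ if and only if $A\parallel A'$, together with surjectivity, i.e.\ that every row-equivalence class of column-strict $\pi$-tableaux is $[Q_\pi(\gamma(A))]$ for some semi-standard $A$. The ``if'' half is routine: two complex numbers in different cosets of $\C$ modulo $\Z$ are incomparable for the order $\geq$ used in Schensted insertion, so an entry of one column never bumps, nor is bumped by, an entry of a column in a different coset; transposing two equal-height columns with entries in different cosets therefore only reinterleaves the two corresponding blocks of the column reading word and leaves $Q(\gamma(A))$ unchanged up to permuting entries within rows. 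The ``only if'' half, and surjectivity, form the delicate point: one must produce, for each $\alpha\in\mathfrak{t}^*_\lambda$, a \emph{semi-standard} $\pi$-tableau $A$ with $Q(\gamma(A))\sim Q(\alpha)$, even though for a non-left-justified pyramid the column-strict representative of $[Q_\pi(\alpha)]$ itself generally fails to be semi-standard. I would do this by viewing $\pi$ as a skew shape and building $A$ by a reverse jeu-de-taquin rectification of $Q(\alpha)$ into $\pi$ --- equivalently, by exhibiting an explicit combinatorial inverse to $R$ --- and then checking that the tableau so obtained is column-strict and that its column reading word rectifies back to $Q(\alpha)$. Granting this existence statement, bijectivity of $R$, the explicit formula, and the left-justified specialisation all follow.
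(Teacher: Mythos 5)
Your structure matches the paper's: establish that the two arrows into $\lPrim$ are bijections, deduce that the vertical arrow $R$ is forced to be their composite, and identify it explicitly via Theorem~\ref{labels2} and Theorem~\ref{labels}. You also correctly pin down what remains as a purely combinatorial statement about $Q_\pi$, $\gamma$, and parallel classes. The paper handles exactly that step by citation to \cite[$\S$4.1]{BKrep}, where it is proved that $[A]\mapsto[B]$ with $B\sim Q_\pi(\gamma(A))$ is a well-defined bijection from parallel classes of semi-standard $\pi$-tableaux to row-equivalence classes of column-strict $\pi$-tableaux; given that, the paper checks commutativity directly from Lemma~\ref{tr} and (\ref{josephs}), which is equivalent to your route since Theorem~\ref{labels2} is itself proved that way. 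The bijectivity of the lower arrow and the left-justified specialisation are treated as you treat them.

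The one genuine gap is the combinatorial bijectivity itself, which you honestly flag but do not close. Moreover the fix you sketch has a concrete problem: a pyramid $\pi$ is in general \emph{not} a skew shape, because the column heights $q_1,\dots,q_l$ need not be monotone in either direction (already for $\lambda=(3,2,1)$ one of the allowed pyramids has $(q_1,q_2,q_3)=(1,3,2)$). So one cannot simply rectify $Q(\alpha)$ into $\pi$ by reverse jeu-de-taquin. Your ``if'' half (entries from distinct cosets of $\C$ mod $\Z$ never interact under Schensted insertion) is fine, but the ``only if'' half and the surjectivity require a different argument; short of reproducing the one in \cite[$\S$4.1]{BKrep}, the cleanest repair is to cite that reference as the paper does.
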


\begin{proof}
In \cite[$\S$4.1]{BKrep},
the following purely combinatorial statement is established:
there is a well-defined bijection $R$ from
parallel classes of semi-standard $\pi$-tableaux to
row-equivalence classes of column-strict $\pi$-tableaux
sending $[A]$ to $[B]$
where $B \sim Q_\pi(\gamma(A))$.
To deduce the first part of the lemma from this,
note for such $A$ and $B$ 
that $B \sim Q_\pi(\rho(B))$ by Lemma~\ref{tr},
hence our bijection $R$
sends $[A]$ to $[B]$ where $Q(\gamma(A)) \sim Q(\rho(B))$.
In view of (\ref{josephs}) we deduce that the 
diagram in the statement of the lemma commutes.
It remains to observe that the top right map in the diagram is already
known to be a bijection, thanks to Corollary~\ref{lbij},
Theorem~\ref{fdc} and Theorem~\ref{labels}.
The last statement of the lemma is clear as
$Q_\pi(\gamma(A)) \sim A$ in case $\pi$ is left-justified
and $A$ is semi-standard.
\end{proof}
 
Now we can state (and slighty extend) the main result 
from \cite[$\S$8.5]{BKrep}
which identifies some of the
$\overline{H}_0(\mathfrak{m}_\chi, L(A))$'s
with $L(B,e)$'s.
The equivalences 
in this theorem
originate in work of Irving \cite{I} and proofs in varying degrees of generality 
can be found in several places in the literature.

\begin{Theorem}\label{bigt}
Let $A$ be a column-strict $\pi$-tableau. 
The following conditions are equivalent:
\begin{itemize}
\item[(1)] $A$ is semi-standard;
\item[(2)] the projective cover of $L(A)$ in $\mathcal{O}_\pi$ is
  self-dual;
\item[(3)] $L(A)$ is isomorphic to a submodule of a parabolic Verma module
in $\mathcal{O}_\pi$;
\item[(4)] $\gkdim L(A) = \dim \mathfrak{m}$,
which is the maximum
Gelfand-Kirillov dimension of any module in $\mathcal{O}_\pi$;
\item[(5)] $\gkdim (U(\mathfrak{g}) / \ann_{U(\mathfrak{g})} L(A)) = 
\dim G\cdot e = 2 \dim \mathfrak{m}$;
\item[(6)]
the associated variety $\VA'(\ann_{U(\mathfrak{g})} L(A))$ is the closure of $
G \cdot e$;
\item[(7)] the module
$\overline{H}_0(\mathfrak{m}_\chi, L(A))$ is non-zero.
\end{itemize}
Assuming these conditions hold, we have that
\begin{equation*}
\overline{H}_0(\mathfrak{m}_\chi, L(A))
\cong L(B,e)
\end{equation*}
where $B$ is a column-strict $\pi$-tableau with $B \sim Q_\pi(\gamma(A))$, 
i.e. $[B]$ is the image of $[A]$ under the bijection from Lemma~\ref{rect}.
\end{Theorem}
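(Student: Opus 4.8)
The plan is to treat the equivalence of conditions (1)--(7) as an assembly of standard facts, and to concentrate the real work on the concluding identification.

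For the equivalences: the chain $(2)\Leftrightarrow(3)\Leftrightarrow(4)$ is an instance of Irving's theory of modules with self-dual projective cover (``Irving modules''), which says that a simple object of a parabolic category $\mathcal{O}$ embeds in a parabolic Verma module precisely when its projective cover is self-dual, and that this is in turn equivalent to the module attaining the maximal Gelfand--Kirillov dimension available in the category; see \cite{I} and the references in \cite{BKrep}. The chain $(4)\Leftrightarrow(5)\Leftrightarrow(6)$ is the standard dictionary relating $\gkdim L$ for a highest weight module $L$, the Gelfand--Kirillov dimension of the primitive quotient $U(\mathfrak{g})/\ann_{U(\mathfrak{g})}L$, and $\dim\VA'(\ann_{U(\mathfrak{g})}L)$, combined with $\dim G\cdot e=2\dim\mathfrak{m}$. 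The equivalence $(1)\Leftrightarrow(4)$ is where the combinatorics of $\pi$-tableaux enters: comparing the formula for $\gkdim L(A)$ that can be read off from \cite{BKrep} with Joseph's description of the nilpotent orbit attached to $L(\gamma(A))$ through the Robinson--Schensted shape of $Q(\gamma(A))$, one gets $\gkdim L(A)=\dim\mathfrak{m}$ exactly when $Q(\gamma(A))$ has shape $\lambda$, i.e.\ exactly when $A$ is semi-standard. Finally $(6)\Leftrightarrow(7)$ is the classical non-vanishing criterion for the coinvariant Whittaker functor in terms of associated varieties, going back to Kostant, Lynch and Irving; it can also be deduced from the duality (\ref{duzz}), the opposite form of Skryabin's equivalence, and the opposite version of Theorem~\ref{t31} together with its converse from \cite{Lsymplectic}.

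Now assume (1)--(7) hold and set $\widetilde{L}:=\overline{H}_0(\mathfrak{m}_\chi,L(A))$. Applying the functor $\overline{H}_0(\mathfrak{m}_\chi,?)$, which is exact on $\mathcal{O}_\pi$ by Lemma~\ref{exactagain}, to the short exact sequence $0\to\operatorname{rad}M(A)\to M(A)\to L(A)\to 0$ and invoking Lemma~\ref{gform}, we see that $\widetilde{L}$ is a quotient of the standard module $V(A,e)$; in particular it is finite dimensional, and it is nonzero by (7). The essential point, which I would take over from \cite[$\S$8.5]{BKrep} (and which rests ultimately on the analysis of Whittaker modules initiated by Kostant and Irving), is that $\widetilde{L}$ is in fact \emph{irreducible}; equivalently, via (\ref{duzz}) and the opposite Skryabin equivalence, that the restricted dual $L(A)^\#$ is irreducible in the category of $\mathfrak{m}_\chi$-locally nilpotent modules. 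Establishing this irreducibility is the main obstacle in a self-contained account.

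It then remains to identify which irreducible module $\widetilde{L}$ is, and this is where the new input is used. I would compute its annihilator: by (\ref{duzz}), the opposite Skryabin equivalence, and the standard compatibility between the two Skryabin functors (via the antipode, whose induced automorphism of $U(\mathfrak{g})$ acts trivially on primitive ideals by \cite[5.2(2)]{Je}), one gets $I(\widetilde{L})=\ann_{U(\mathfrak{g})}L(A)^\#$; and $\ann_{U(\mathfrak{g})}L(A)^\#=\ann_{U(\mathfrak{g})}L(A)$ as in the proof of Lemma~\ref{dizz}, whose freeness hypothesis may be relaxed to finite generation over $\mathfrak{m}$, which $L(A)$ satisfies since it is a quotient of $M(A)$. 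As $A$ is semi-standard we have $\gamma(A)\in\mathfrak{t}^*_\lambda$ and $L(A)=L(\gamma(A))$, so $\ann_{U(\mathfrak{g})}L(A)=I(\gamma(A))$, and Theorem~\ref{labels2} identifies this with $I(L(B,e))$ for any column-strict $\pi$-tableau $B$ with $B\sim Q_\pi(\gamma(A))$. Thus $\widetilde{L}$ is a finite dimensional irreducible left $U(\mathfrak{g},e)$-module with $I(\widetilde{L})=I(L(B,e))$, and the injectivity of $[L]\mapsto I(L)$ on finite dimensional irreducibles (Corollary~\ref{lbij}) forces $\widetilde{L}\cong L(B,e)$, completing the proof. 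As an alternative to invoking irreducibility as a black box, one could instead run a Grothendieck-group induction on semi-standard $\pi$-tableaux, using that composition factors $L(C)$ of $\operatorname{rad}M(A)$ with $C$ not semi-standard are annihilated by $\overline{H}_0(\mathfrak{m}_\chi,?)$ (by the already-established $(1)\Leftrightarrow(7)$) and inverting the resulting unitriangular relation between the classes $[V(A,e)]$ and $[\overline{H}_0(\mathfrak{m}_\chi,L(A))]$; this would pin down $\widetilde{L}$ and its irreducibility simultaneously.
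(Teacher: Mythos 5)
Your proposal covers essentially the same ground as the paper, but it takes a genuinely different route to the final identification $\overline{H}_0(\mathfrak{m}_\chi,L(A))\cong L(B,e)$: the paper derives this directly from \cite[Corollary 8.24]{BKrep} after identifying $\overline{H}_0(\mathfrak{m}_\chi,?)|_{\mathcal O_\pi}$ with the functor $\mathbb V$ of \cite{BKrep}, whereas you propose to compute the annihilator $I(\widetilde L)$ and then invoke the injectivity of $[L]\mapsto I(L)$ from Corollary~\ref{lbij}. That is a nice alternative, and it highlights Losev's bijection as doing the identification work. Your grouping of the equivalences also differs: you go $(1)\Leftrightarrow(4)$ by comparing a Gelfand--Kirillov dimension formula with the Robinson--Schensted shape, whereas the paper gets $(1)\Leftrightarrow(6)$ ``for free'' from Corollary~\ref{lbij} and then $(4)\Leftrightarrow(5)\Leftrightarrow(6)$ from standard dimension theory; the paper's route is shorter and avoids needing an explicit formula for $\gkdim L(A)$.

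There is however one concrete gap in your annihilator computation. You assert that $\ann_{U(\mathfrak g)}L(A)^\#=\ann_{U(\mathfrak g)}L(A)$ ``as in the proof of Lemma~\ref{dizz}, whose freeness hypothesis may be relaxed to finite generation over $\mathfrak m$.'' This relaxation is not justified: the proof of Lemma~\ref{dizz} reduces the vanishing of $\bigcap_{i\geq 0}M\mathfrak m_\chi^i$ to the free case precisely because that is where the argument works (twist by $x\mapsto x+\chi(x)$ and use the grading on $\mathfrak m$), and for a quotient like $L(A)$ one has no such reduction. Indeed the paper is careful to apply (the opposite version of) Lemma~\ref{dizz} only to the parabolic Verma module $M(A)$, which \emph{is} free over $U(\mathfrak m)$, and for $L(A)$ it uses only the trivial inclusion $\ann_{U(\mathfrak g)}L(A)\subseteq\ann_{U(\mathfrak g)}L(A)^\#$. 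The equality you want is in fact true, but the correct justification is different: both sides are primitive ideals (the right-hand side because $L(A)^\#$ is irreducible, your black-box input), and by the argument in the paper's proof of $(7)\Rightarrow(6)$ both have associated variety $\overline{G\cdot e}$; a proper inclusion of primitive ideals would force a strict drop in Gelfand--Kirillov dimension of the quotient, hence a strict drop in the dimension of the associated variety, a contradiction. If you replace your appeal to a ``relaxed'' Lemma~\ref{dizz} by this standard primitive-ideal-chain argument, the annihilator computation goes through.

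Two smaller points. First, your claim that $(6)\Leftrightarrow(7)$ is deducible from Theorem~\ref{t31} and its converse only gives $(7)\Rightarrow(6)$ directly; for $(6)\Rightarrow(7)$ you really do need a non-vanishing theorem of Kostant--Lynch type (equivalently, the $(4)\Rightarrow(7)$ implication), or else you must route through $(6)\Rightarrow(1)\Rightarrow(7)$ as the paper does. Second, you are right that the irreducibility of $\widetilde L=\overline{H}_0(\mathfrak m_\chi,L(A))$ is the crux; the paper also outsources this to \cite{BKrep}, so treating it as a black box is consistent with the paper's level of self-containment, and your sketched Grothendieck-group alternative is a reasonable direction but, as you note, would need more work to actually pin down irreducibility rather than just the class.
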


\begin{proof}
By (\ref{duzz}) and the first paragraph of the proof of \cite[Lemma 8.20]{BKrep},
the restriction
of the functor
$\overline{H}_0(\mathfrak{m}_\chi, ?)$
to $\mathcal{O}_\pi$
is isomorphic to the restriction of the functor $\mathbb{V}$
defined in \cite[$\S$8.5]{BKrep}.
Given this and assuming just that (1) holds, the existence of  an 
isomorphism
$\overline{H}_0(\mathfrak{m}_\chi, L(A))
\cong L(B,e)$
 follows from \cite[Corollary 8.24]{BKrep}.
In particular 
$\overline{H}_0(\mathfrak{m}_\chi, L(A)) \neq \bz$, establishing
that (1) $\Rightarrow$ (7).
(In fact \cite[Corollary 8.24]{BKrep} also 
proves
(7) $\Rightarrow$ (1) but via an argument that
uses the Kazhdan-Lusztig conjecture; 
we will give an alternative argument shortly avoiding that.)

The equivalence (1) $\Leftrightarrow$ (6)
follows from Corollary~\ref{lbij}, 
since $L(A) \cong L(\gamma(A))$ and by definition 
$A$ is semi-standard if and only if $Q(\gamma(A))$
is of shape $\lambda$.
The equivalence of (4) $\Leftrightarrow$ (5) follows by standard properties
of Gelfand-Kirillov dimension; see \cite[Proposition 2.7]{Joldest}.
We refer to \cite[Theorem 4.8]{BKschur} 
for (1) $\Leftrightarrow$ (2) 
$\Leftrightarrow$ (3) and postpone (4) until the next paragraph.
Note that \cite{BKschur} 
proves a slightly weaker result (integral weights, left-justified $\pi$) 
but the argument there extends.

It remains to check (5) $\Leftrightarrow$ (6) $\Leftarrow$ (7).
We have that
$$
\ann_{U(\mathfrak{g})} L(A)\supseteq 
\ann_{U(\mathfrak{g})} M(A)
= 
\ann_{U(\mathfrak{g})} (M(A)^\#),
$$
using the opposite version of Lemma~\ref{dizz}. 
Hence
$$
\VA'(\ann_{U(\mathfrak{g})} L(A))\subseteq 
\VA'(\ann_{U(\mathfrak{g})} (M(A)^\#)).
$$
Since $\overline{H}_0(\mathfrak{m}_\chi, M(A))^*
\cong \overline{H}^0(\mathfrak{m}_\chi, M(A)^\#)$
by (\ref{duzz}), we see using also Lemma~\ref{gform} 
that
$\overline{H}^0(\mathfrak{m}_\chi, M(A)^\#)$ 
is finite dimensional and non-zero.
Hence we can invoke the opposite version of Theorem~\ref{t31}
to deduce
$\VA'(\ann_{U(\mathfrak{g})} (M(A)^\#)) = \overline{G\cdot e}$.
Hence
$\VA'(\ann_{U(\mathfrak{g})} L(A)) \subseteq
\overline{ G \cdot e }$ and the equivalence of (5) and (6) follows by standard
dimension theory.
Also it is obvious that
$$
\ann_{U(\mathfrak{g})} L(A)\subseteq 
\ann_{U(\mathfrak{g})} (L(A)^\#)
$$
so $$
\overline{G\cdot e}
\supseteq
\VA'(\ann_{U(\mathfrak{g})} L(A))\supseteq 
\VA'(\ann_{U(\mathfrak{g})} (L(A)^\#)).
$$
Finally we repeat the earlier argument with (\ref{duzz}) and
the opposite version of Theorem~\ref{t31} to see that that
$\VA'(\ann_{U(\mathfrak{g})} (L(A)^\#)) =
\overline{G\cdot e}$
assuming (7) holds.
Hence (7) $\Rightarrow$ (6).
\end{proof}

From this, we obtain the following alternative classification of the
finite dimensional irreducible left $U(\mathfrak{g},e)$-modules;
cf. Theorem~\ref{fdc}.

\begin{Corollary}\label{altclass}
As $A$ runs over a set of representatives for the parallel classes of semi-standard
$\pi$-tableaux, the modules $\{\overline{H}_0(\mathfrak{m}_\chi,L(A))\}$
give a complete set of pairwise non-isomorphic irreducible $U(\mathfrak{g},e)$-modules.
\end{Corollary}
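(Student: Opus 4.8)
The plan is to read off this classification directly from Theorem~\ref{bigt}, the combinatorial bijection of Lemma~\ref{rect}, and the highest weight classification of Theorem~\ref{fdc}. First I would note that any semi-standard $\pi$-tableau $A$ is in particular column-strict, so condition~(1) of Theorem~\ref{bigt} holds for every such $A$. Theorem~\ref{bigt} then yields an isomorphism $\overline{H}_0(\mathfrak{m}_\chi, L(A)) \cong L(B,e)$ of left $U(\mathfrak{g},e)$-modules, where $B$ is a column-strict $\pi$-tableau with $B \sim Q_\pi(\gamma(A))$, i.e. $[B] = R([A])$ for the bijection $R$ of Lemma~\ref{rect}. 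In particular each $\overline{H}_0(\mathfrak{m}_\chi, L(A))$ is a non-zero finite dimensional irreducible left $U(\mathfrak{g},e)$-module, consistently with Lemma~\ref{exactagain}.

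The rest is bookkeeping with representatives. Since $R$ is a bijection from parallel classes of semi-standard $\pi$-tableaux onto row-equivalence classes of column-strict $\pi$-tableaux, as $A$ runs over a set of representatives for the parallel classes the associated tableau $B = B(A)$ runs over a set of representatives for the row-equivalence classes of column-strict $\pi$-tableaux. By Theorem~\ref{fdc} the modules $\{L(B(A),e)\}$ then form a complete, irredundant list of finite dimensional irreducible left $U(\mathfrak{g},e)$-modules; transporting this through the isomorphisms $\overline{H}_0(\mathfrak{m}_\chi, L(A)) \cong L(B(A),e)$ gives the corollary. For the ``pairwise non-isomorphic'' part one uses that $A \not\parallel A'$ forces $[B(A)] \neq [B(A')]$ by injectivity of $R$, whence $L(B(A),e) \not\cong L(B(A'),e)$ by Theorem~\ref{fdc}; for completeness one uses that every row-equivalence class $[B]$ is of the form $R([A])$ by surjectivity of $R$.

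I do not expect a genuine obstacle here: the statement is a formal corollary of results already in hand. The only points requiring a moment's care are that ``complete set of pairwise non-isomorphic irreducible $U(\mathfrak{g},e)$-modules'' must be understood as referring to the finite dimensional irreducibles (which is forced, since $\overline{H}_0(\mathfrak{m}_\chi,?)$ takes values in finite dimensional modules by Lemma~\ref{exactagain}), and that the isomorphism produced by Theorem~\ref{bigt} depends on $B$ only through its row-equivalence class, which is exactly the indexing used in Theorem~\ref{fdc}. Should one wish to avoid citing the full strength of Theorem~\ref{bigt}, the key isomorphism $\overline{H}_0(\mathfrak{m}_\chi, L(A)) \cong L(B,e)$ could equally be extracted from \cite[Corollary 8.24]{BKrep} as in the proof of that theorem, but there is no need to redo this.
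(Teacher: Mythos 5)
Your argument is the same as the paper's, which simply cites Theorem~\ref{fdc}, Theorem~\ref{bigt} and Lemma~\ref{rect}; you have just spelled out the bookkeeping that the paper leaves implicit. Correct and consistent.
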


\begin{proof}
Combine Theorem~\ref{fdc}, Theorem~\ref{bigt} 
and the bijection in Lemma~\ref{rect}.
\end{proof}

\section{Dimension formulae}\label{sgoldie}

Now we are ready to look more closely at the dimensions
of the finite dimensional irreducible $U(\mathfrak{g},e)$-modules.
We note for column-strict $\pi$-tableaux $A$ and $B$ that
the composition multiplicity $[M(A):L(B)]$ is zero unless $A$
and $B$ have the same {\em content} (multiset of entries),
as follows by central character considerations.
Define $(L(A):M(B)) \in \Z$
from the expansion \begin{equation}\label{Stupid}
[L(A)] = {\sum_B} (L(A):M(B)) [M(B)],
\end{equation}
equality in the Grothendieck group of $\mathcal{O}_\pi$,
where we adopt the convention here and for the rest of the
section
that summation over $B$ always means
summation over 
all column-strict $\pi$-tableaux $B$
having the same content as $A$.
Also define
\begin{equation*}
h_\pi := \prod_{\substack{1 \leq i < j \leq N \\ \col(i) = \col(j)}}
\frac{x_i - x_j}{j-i} \in \C[\mathfrak{t}^*],
\end{equation*}
which is relevant because
the Weyl dimension formula tells us that
\begin{equation}\label{wdf}
\dim V(A,e) = 
\dim V(A) = \dim (\C_\beta \otimes V(A))
=h_\pi(\gamma(A))
\end{equation}
for any column-strict $\pi$-tableau $A$.

\begin{Theorem}\label{mydim}
For any column-strict $\pi$-tableau $A$, we have that
\begin{equation*}
\dim \overline{H}_0(\mathfrak{m}_\chi, L(A))= {\sum_B} (L(A):M(B))
h_\pi(\gamma(B)).
\end{equation*}
Moreover $\dim \overline{H}_0(\mathfrak{m}_\chi, L(A)) = 0$
unless $A$ is semi-standard, in which case
it is equal to $\dim L(B,e)$ where $B$ is any column-strict
$\pi$-tableau with $B \sim Q_\pi(\gamma(A))$.
\end{Theorem}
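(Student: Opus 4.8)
The plan is to reduce the first formula to an identity in the Grothendieck group of $\mathcal{O}_\pi$, and to read the \emph{moreover} clause straight off Theorem~\ref{bigt}.

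First I would note that, by Lemma~\ref{exactagain}, the restriction of $\overline{H}_0(\mathfrak{m}_\chi,?)$ to $\mathcal{O}_\pi$ is exact and takes values in finite dimensional $U(\mathfrak{g},e)$-modules; hence $M \mapsto \dim \overline{H}_0(\mathfrak{m}_\chi,M)$ is additive on short exact sequences and defines a group homomorphism $K_0(\mathcal{O}_\pi) \to \Z$. Evaluating this homomorphism on the identity $[L(A)] = \sum_B (L(A):M(B))[M(B)]$ from (\ref{Stupid}) — a finite sum, with $B$ ranging over the column-strict $\pi$-tableaux of the same content as $A$, since by central character considerations the composition factors of $M(A)$ all share its content — gives
\[
\dim \overline{H}_0(\mathfrak{m}_\chi, L(A)) = \sum_B (L(A):M(B))\,\dim \overline{H}_0(\mathfrak{m}_\chi, M(B)).
\]
Now Lemma~\ref{gform} identifies $\overline{H}_0(\mathfrak{m}_\chi, M(B))$ with the standard module $V(B,e)$, whose dimension is $h_\pi(\gamma(B))$ by the Weyl dimension formula (\ref{wdf}). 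Substituting produces the displayed formula of the theorem.

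For the remaining assertions I would simply invoke Theorem~\ref{bigt}: the equivalence $(1)\Leftrightarrow(7)$ there says that $\overline{H}_0(\mathfrak{m}_\chi, L(A))$ is non-zero — equivalently has nonzero dimension — exactly when $A$ is semi-standard, and in that case the last line of Theorem~\ref{bigt} gives $\overline{H}_0(\mathfrak{m}_\chi, L(A)) \cong L(B,e)$ for any column-strict $\pi$-tableau $B$ with $B \sim Q_\pi(\gamma(A))$; taking dimensions completes the proof.

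I do not expect a genuine obstacle here: the argument is essentially bookkeeping built on Lemmas~\ref{exactagain} and \ref{gform}, the Weyl dimension formula, and Theorem~\ref{bigt}. The only place warranting a moment's care is checking that $\dim \overline{H}_0(\mathfrak{m}_\chi,?)$ descends to $K_0(\mathcal{O}_\pi)$ and that the expansion (\ref{Stupid}) is a finite sum over tableaux of fixed content — both immediate from the cited results together with the finite length of objects in category $\mathcal{O}$.
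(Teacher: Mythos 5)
Your proposal is correct and follows the paper's own argument essentially step for step: exactness from Lemma~\ref{exactagain} gives additivity on $K_0(\mathcal{O}_\pi)$, (\ref{Stupid}) plus Lemma~\ref{gform} and (\ref{wdf}) yield the dimension formula, and the \emph{moreover} clause is read off Theorem~\ref{bigt}. The only cosmetic difference is that you pass to dimensions immediately rather than first writing the identity in the Grothendieck group; the substance is identical.
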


\begin{proof}
The final statement of the theorem is clear from Theorem~\ref{bigt}.
For the first statement, we know by Lemma~\ref{exactagain} that the functor
$\overline{H}_0(\mathfrak{m}_\chi, ?)$ induces a linear map
between the Grothendieck group of $\mathcal O_\pi$ and the
Grothendieck group of the category of finite dimensional left
$U(\mathfrak{g},e)$-modules.
Applying this map to (\ref{Stupid}) and using Lemma~\ref{gform}
gives the identity
$$
[\overline{H}_0(\mathfrak{m}_\chi, L(A))] = {\sum_B} (L(A):M(B)) [V(B,e)].
$$
The dimension formula follows immediately from
this and (\ref{wdf}).
\end{proof}

In the rest of the section 
we explain how to 
rewrite the sum appearing in 
Theorem~\ref{mydim} in terms of the Kazhdan-Lusztig
polynomials from (\ref{bigkl2}). 
Actually for simplicity we will restrict attention from now on to integral
weights, an assumption which can be justified in several different
ways, one being the following result from 
\cite{BKrep}.

\begin{Theorem}[{\cite[Theorem 7.14]{BKrep}}]\label{dimred}
Suppose $A$ is a column-strict $\pi$-tableau.
Partition the set 
$\{1,\dots,l\}$
into subsets 
$\{i_1 < \cdots < i_k\}$
and $\{j_1 < \cdots < j_{l-k}\}$
in such a way that no entry in any of the columns $i_1,\dots,i_k$ of $A$
is in the same coset of $\C$ modulo $\Z$ as any of the entries in
the columns $j_1,\dots,j_{l-k}$.
Let $A'$ (resp.\ $A''$) 
be the column-strict tableau
consisting just of 
columns $i_1,\dots,i_k$ (resp.\ $j_1,\dots,j_{l-k}$) of $A$
arranged in order from left to right.
Then
$$
\dim L(A,e) = \dim L(A',e') \times \dim L(A'',e'')
$$
where $e'$ and $e''$ are the nilpotent elements
associated to the pyramids of shapes $A'$ and $A''$, respectively.
\end{Theorem}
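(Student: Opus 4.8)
The plan is to read off $\dim L(A,e)$ from the dimension formula of Theorem~\ref{mydim} and then to observe that each ingredient of that formula factors along the partition of the columns of $A$. First I would make two harmless normalizations. Since $U(\mathfrak{g},e)$, hence each $\dim L(\,\cdot\,,e)$, depends up to isomorphism only on the Jordan type $\lambda$ (Theorem~\ref{fdc} together with \cite[Corollary 10.3]{BKshifted}), I may assume that $\pi$ --- and the sub-pyramids $\pi'$, $\pi''$ of shapes $A'$, $A''$ --- are left-justified. Then, replacing $A$ by the row-standard tableau in its row-equivalence class (which exists by the left-justified case of Lemma~\ref{rect}), I may assume $A$ itself is row-standard, hence semi-standard. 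The point needing a check is that this last replacement does not disturb the hypothesis on columns: the bubble sort carrying $A$ to a row-standard tableau only ever transposes two entries in the \emph{same} coset of $\C$ modulo $\Z$, so the set of positions occupied by entries from any given coset is unchanged, and in particular no entry is moved between the two families of columns; and $A'$, $A''$ are then again column-strict and row-standard, so semi-standard for $\pi'$, $\pi''$. Since in the left-justified case $Q_\pi(\gamma(A))\sim A$ for semi-standard $A$ (Lemma~\ref{rect}), Theorem~\ref{mydim} yields
\[
\dim L(A,e)=\dim\overline{H}_0(\mathfrak{m}_\chi,L(A))=\sum_B (L(A):M(B))\,h_\pi(\gamma(B)),
\]
the sum over column-strict $\pi$-tableaux $B$ of the same content as $A$, and likewise for $\dim L(A',e')$ and $\dim L(A'',e'')$.

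Next I would factor the summand. Column-strictness forces all entries of a single column of $B$ to lie in one coset of $\C$ modulo $\Z$, so $B$ splits canonically into column-monochromatic blocks; and whenever $(L(A):M(B))\neq 0$, linkage in $\mathcal{O}_\pi$ forces $\gamma(B)$ into the dot-orbit of $\gamma(A)$ under the integral Weyl group $W_{[\gamma(A)]}$, which permutes coordinates only within cosets, so $B$ has exactly the same coset-partition of its columns as $A$. Hence only those $B$ that decompose as a $\pi'$-tableau $B'$ on the columns $i_1,\dots,i_k$ glued to a $\pi''$-tableau $B''$ on the columns $j_1,\dots,j_{l-k}$ contribute, and $B\mapsto(B',B'')$ is a bijection onto pairs of column-strict tableaux with contents $\mathrm{content}(A')$ and $\mathrm{content}(A'')$. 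For such $B$, the factor $h_\pi(\gamma(B))$ is a product of column factors, each depending only on the entries of that column: by (\ref{wdf}) it equals $\dim V(B)$, and $V(B)$ is the outer tensor product over the columns of $B$ of the irreducible modules for the diagonal blocks of the Levi $\mathfrak{h}$; grouping the columns gives $h_\pi(\gamma(B))=h_{\pi'}(\gamma(B'))\,h_{\pi''}(\gamma(B''))$.

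The other ingredient is the inverse decomposition number $(L(A):M(B))$. Since $\mathrm{content}(A)=\mathrm{content}(A')\sqcup\mathrm{content}(A'')$ with the two parts supported on disjoint unions of $\Z$-cosets, the group $W_{[\gamma(A)]}$ is a direct product of symmetric groups whose factors split into the ones attached to the columns $i_1,\dots,i_k$ and the ones attached to the columns $j_1,\dots,j_{l-k}$. Correspondingly the block of $\mathcal{O}_\pi$ containing $L(A)$ is equivalent --- compatibly with parabolic Verma modules and with simple modules --- to the exterior tensor product of the two smaller parabolic categories $\mathcal{O}$ built from $\pi'$ and $\pi''$, so that $M(B)$ corresponds to $M(B')\boxtimes M(B'')$ and $L(B)$ to $L(B')\boxtimes L(B'')$; hence $(L(A):M(B))=(L(A'):M(B'))\,(L(A''):M(B''))$. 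Substituting the two factorizations into the displayed sum and separating variables,
\[
\dim L(A,e)=\Big(\sum_{B'}(L(A'):M(B'))\,h_{\pi'}(\gamma(B'))\Big)\Big(\sum_{B''}(L(A''):M(B''))\,h_{\pi''}(\gamma(B''))\Big)=\dim L(A',e')\,\dim L(A'',e''),
\]
by one more application of Theorem~\ref{mydim}.

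The step I expect to carry the real weight is this block factorization of $(L(A):M(B))$ --- the standard decomposition of a block of parabolic category $\mathcal{O}$ whose integral root system breaks into pieces supported on disjoint $\Z$-cosets. Via (\ref{kldef})--(\ref{bigkl2}) and the character identity (\ref{conj}) (now a theorem of Losev) it amounts to the multiplicativity of parabolic Kazhdan--Lusztig polynomials along such a decomposition, which is classical; the delicate point is only that the equivalence must send $M(B)\mapsto M(B')\boxtimes M(B'')$ and $L(B)\mapsto L(B')\boxtimes L(B'')$ as used above. The two reductions in the first paragraph and the factorization of $h_\pi$ are routine. An alternative, probably closer to the original argument in \cite{BKrep}, would bypass category $\mathcal{O}$ altogether: describing the finite dimensional irreducibles of the truncated shifted Yangian by their Drinfeld polynomials, disjointness of the supporting cosets would exhibit $L(A,e)$ directly as a tensor product of modules inflated from smaller truncated shifted Yangians. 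That route needs Yangian-theoretic input not developed in this excerpt, so I would favour the category $\mathcal{O}$ argument above.
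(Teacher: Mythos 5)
The paper does not prove Theorem~\ref{dimred} at all --- it is cited verbatim from \cite[Theorem 7.14]{BKrep} --- so there is no in-paper argument to compare against. Your proposal, therefore, has to be judged on its own merits, and on those merits it is correct. The reduction to left-justified $\pi$ and semi-standard $A$ is sound: each transposition in the row-standardization swaps entries $a>b$ in a row, and $a>b$ forces $a-b\in\Z_{>0}$, so no swap ever crosses between the two families of columns, and the resulting $A'$, $A''$ are again column-strict and row-standard. The factorization $h_\pi(\gamma(B))=h_{\pi'}(\gamma(B'))\,h_{\pi''}(\gamma(B''))$ is immediate from the definition of $h_\pi$ as a product over same-column pairs. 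The substantial step, as you correctly flag, is the factorization $(L(A):M(B))=(L(A'):M(B'))\,(L(A''):M(B''))$ together with the observation that linkage kills all $B$ whose coset-partition of columns differs from $A$'s; this is the standard decomposition of a block of $\mathcal{O}$ (and its parabolic analogue, or via Lemma~\ref{klthm} the reduction to full $\mathcal{O}$) according to the direct-product decomposition of the integral Weyl group $W_{[\gamma(A)]}$, under which Verma and simple modules go to outer tensor products. Separating variables then gives the claimed identity by two more applications of Theorem~\ref{mydim}.

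Your route is genuinely different from the original argument in \cite{BKrep}, and your guess about that original argument is accurate: there the statement is proved on the Yangian side, writing $L(A,e)$ as an (irreducible) tensor product of modules inflated from smaller truncated shifted Yangians using the disjointness of the Drinfeld-polynomial supports modulo $\Z$, with no appeal to category $\mathcal{O}$. What your approach buys is independence from the Yangian presentation; what it costs is a dependence on the Whittaker-coinvariants dimension formula Theorem~\ref{mydim}, which this paper establishes using \cite[Corollary 8.24]{BKrep} and Losev's \cite[Theorem 5.1.1]{L1D}. If you intend this as an alternative proof living inside \cite{BKrep}'s own framework (rather than inside the present paper's), you should verify that the ingredients of Theorem~\ref{mydim} are logically upstream of \cite[Theorem 7.14]{BKrep}; within the present paper there is no such concern. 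One small slip: the invocation of~(\ref{conj}) in your final paragraph is a non sequitur --- that identity relates $W$-algebra Verma multiplicities to $\mathcal{O}$-Verma multiplicities, which plays no role in the argument you actually run, which only ever touches inverse decomposition numbers $(L(\cdot):M(\cdot))$ inside category $\mathcal{O}$.
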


For an anti-dominant weight $\delta \in P$,
recall from the introduction that $W_\delta$ denotes its stabilizer
and $D_\delta$ is the set of minimal length $W / W_\delta$-coset representatives.
Also let 
\begin{equation}\label{colstab}
W^\pi := \{w \in W\:|\:\col(w(i)) = \col(i)\text{ for all }i=1,\dots,N\},
\end{equation}
the {\em column stabilizer} of our pyramid $\pi$,
and $D^\pi$ denote the set of all {maximal} length $W^\pi \backslash
W$-coset representatives. 

\begin{Lemma}\label{klthm}
For column-strict $\pi$-tableaux $A$ and $B$,
we have that 
$$
(L(A):M(B)) = (L(\gamma(A)):M(\gamma(B))).
$$
If $A$ and $B$ have integer entries 
these numbers can be expressed in terms of Kazhdan-Lusztig polynomials
using (\ref{bigkl2}) and (\ref{klform}).
\end{Lemma}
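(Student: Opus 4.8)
The plan is to reduce the parabolic inverse decomposition numbers in $\mathcal{O}_\pi$ to the ordinary ones in category $\mathcal{O}$ by resolving each parabolic Verma module $M(A)$ by ordinary Verma modules in the Grothendieck group. Recall that $M(A) = U(\mathfrak{g})\otimes_{U(\mathfrak{p})}(\C_\beta\otimes V(A))$ and that $\C_\beta\otimes V(A)$ restricts to the finite-dimensional irreducible $\mathfrak{h}$-module of $(\mathfrak{b}\cap\mathfrak{h})$-highest weight $\gamma(A)-\rho$, where $\mathfrak{h} = \mathfrak{gl}_{q_1}(\C)\oplus\cdots\oplus\mathfrak{gl}_{q_l}(\C)$ has Weyl group precisely the column stabilizer $W^\pi$ of (\ref{colstab}). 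Applying the Weyl character formula for $\mathfrak{h}$, rewritten as an alternating sum of characters of Verma modules for $\mathfrak{h}$, together with exactness of parabolic induction $U(\mathfrak{g})\otimes_{U(\mathfrak{p})}(-)$ (which sends the Verma module of $\mathfrak{h}$ of highest weight $\nu$ to the ordinary Verma of highest weight $\nu$), yields
$$[M(A)] = \sum_{w\in W^\pi}(-1)^{\ell(w)}[M(w\gamma(A))]$$
in the Grothendieck group of $\mathcal{O}$, where $M(w\gamma(A))$ denotes the ordinary Verma module of highest weight $w\gamma(A)-\rho$. Here one uses that $\rho-\rho_{\mathfrak{h}}$ is $W^\pi$-invariant, so that the $\mathfrak{h}$-dot-action of $w\in W^\pi$ agrees with its $\mathfrak{g}$-dot-action.

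Next I would substitute this identity into the defining expansion (\ref{Stupid}) to get
$$[L(A)] = \sum_B(L(A):M(B))\sum_{w\in W^\pi}(-1)^{\ell(w)}[M(w\gamma(B))]$$
in the Grothendieck group of $\mathcal{O}$, the outer sum running over column-strict $\pi$-tableaux $B$ of the same content as $A$. Since $L(A) = L(\gamma(A))$, comparing with the expansion (\ref{first}) of $\ch L(\gamma(A))$ and using that the characters of the Verma modules $M(\mu)$ are linearly independent, I would read off the coefficient of $[M(\gamma(B_0))]$ for a fixed column-strict $\pi$-tableau $B_0$ of the same content as $A$, obtaining
$$(L(\gamma(A)):M(\gamma(B_0))) = \sum_B(L(A):M(B))\Bigl(\sum_{\substack{w\in W^\pi\\ w\gamma(B)=\gamma(B_0)}}(-1)^{\ell(w)}\Bigr),$$
so it remains only to check that the inner signed sum is $\delta_{B,B_0}$.

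For this combinatorial point I would argue as follows. For $w\in W^\pi$ the weight $w\gamma(B)$ equals $\gamma(B')$, where $B'$ is obtained from $B$ by permuting entries within columns; since $\gamma$ is injective, $w\gamma(B) = \gamma(B_0)$ forces $B_0$ to be a within-column rearrangement of $B$, and two column-strict $\pi$-tableaux with the same column multisets coincide, so $B = B_0$. When $B = B_0$ the remaining sum runs over $w\in W^\pi$ fixing $\gamma(B_0)$; since the entries in each column of a column-strict tableau are distinct, only $w = 1$ qualifies, so the sum equals $1$. This establishes $(L(A):M(B_0)) = (L(\gamma(A)):M(\gamma(B_0)))$, the first assertion. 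For the second, if $A$ and $B$ have integer entries then, having the same content, the weights $\gamma(A)$ and $\gamma(B)$ share an anti-dominant conjugate, so (\ref{klform}) rewrites $(L(\gamma(A)):M(\gamma(B)))$ as a sum of terms $(L(x):M(yz))$ with $z$ ranging over a parabolic subgroup, and (\ref{bigkl2}) identifies each such term with $\pm P_{yz,x}(1)$.

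The computations involved are routine; the one step that needs genuine care is the combinatorial identity for the inner signed sum, and in particular the observation that the tableaux $B$ contribute only to the ``column-strict slots'' $\gamma(B_0)$ — this is exactly what injectivity of $\gamma$ together with column-strictness supplies, and without it one could not isolate the relevant coefficient cleanly.
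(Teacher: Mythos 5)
Your proof is correct and follows essentially the same route as the paper: expand each parabolic Verma module as an alternating $W^\pi$-sum of ordinary Verma modules via the Weyl character formula, substitute into the parabolic expansion, compare with the expansion of $\ch L(\gamma(A))$ in $[\mathcal O]$, and equate coefficients. The paper compresses the final step to ``equating coefficients of $[M(\gamma(B))]$'' whereas you spell out the combinatorial point that column-strictness forces $B=B_0$ and $w=1$, which is exactly what justifies that step.
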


\begin{proof}
We'll work in the Grothendieck group $[\mathcal O]$ of the full BGG
category $\mathcal O$.
By the Weyl character formula, we have that
$$
[M(B)]=\sum_{x \in W^\pi} (-1)^{\ell(x)} [M(x \gamma(B))].
$$
Substituting this into (\ref{Stupid}) and comparing with
the identity (\ref{first}) for $\alpha = \gamma(A)$,
we get that
\begin{multline*}
{\sum_B}
 \sum_{x \in W^\pi} (-1)^{\ell(x)}
(L(A):M(B)) [M(x \gamma(B))]
=
\sum_{\beta}(L(\gamma(A)):M(\beta))
[M(\beta)].
\end{multline*}
Equating coefficients of $[M(\gamma(B))]$ on both sides gives the conclusion.
\end{proof}

Finally for each $w \in W$ we introduce the {polynomial}
\begin{equation}\label{newgoldie}
p_{w}^\pi
 := 
\sum_{z \in D^\pi} (L(w): M(z)) z^{-1}(h_\pi) \in \C[\mathfrak{t}^*].
\end{equation}
Comparing the following with Theorem~\ref{mydim}
and recalling Corollary~\ref{altclass},
these can be viewed as {\em dimension polynomials}
computing the dimensions of finite dimensional irreducible
$U(\mathfrak{g},e)$-modules in families.

\begin{Theorem}\label{maing}
Let $A$ be a column-strict $\pi$-tableau
such that $\gamma(A) \in W \delta$ for some
anti-dominant $\delta \in P$.
Then
$$
p^\pi_w(\delta)
=
{\sum_B} (L(A):M(B)) h_\pi(\gamma(B))
$$
where
$w = d({\gamma(A)})$
and
the sum is over
all column-strict $\pi$-tableaux $B$
having the same content as $A$.
\end{Theorem}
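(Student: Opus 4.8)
The plan is to move into the Grothendieck group $[\mathcal{O}]$ of the full BGG category $\mathcal{O}$ and apply a suitable linear functional to the identity that already appears inside the proof of Lemma~\ref{klthm}. Recall from that proof that, on substituting the parabolic character identity $[M(B)] = \sum_{x \in W^\pi}(-1)^{\ell(x)}[M(x\gamma(B))]$ into (\ref{Stupid}) and comparing with (\ref{first}) for the weight $\gamma(A)$, one obtains
\[
\sum_B \sum_{x \in W^\pi}(-1)^{\ell(x)}(L(A):M(B))\,[M(x\gamma(B))]
\;=\;
\sum_{\beta}(L(\gamma(A)):M(\beta))\,[M(\beta)]
\]
in $[\mathcal{O}]$, where $B$ runs over the column-strict $\pi$-tableaux with the same content as $A$. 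Since the Verma classes $[M(\mu)]$ are linearly independent, there is a well-defined linear functional $\Phi$ on their span with $\Phi([M(\mu)]) = h_\pi(\mu)$, and the first step is simply to apply $\Phi$ to both sides of this equality.

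For the left-hand side the crucial point is that $h_\pi$ is \emph{alternating} under $W^\pi$: a simple reflection of $W^\pi \cong S_{q_1}\times\cdots\times S_{q_l}$ negates precisely one Vandermonde-type factor of $h_\pi$, so $h_\pi(x\mu) = \sgn(x)\,h_\pi(\mu) = (-1)^{\ell(x)}h_\pi(\mu)$ for every $x \in W^\pi$. Hence each inner sum collapses to $|W^\pi|\,h_\pi(\gamma(B))$, and $\Phi$ of the left-hand side is $|W^\pi|\sum_B (L(A):M(B))\,h_\pi(\gamma(B))$, that is, $|W^\pi|$ times the expression in the theorem. For the right-hand side put $w := d(\gamma(A))$; every nonzero term has $\beta$ in the orbit $W\delta$, so (\ref{klform}) applies and gives $(L(\gamma(A)):M(\beta)) = \sum_{u \in W_\delta}(L(w):M(d(\beta)u))$, while $h_\pi(\beta) = h_\pi(d(\beta)\delta) = h_\pi(d(\beta)u\,\delta)$ because $u$ fixes $\delta$. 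As $(d(\beta),u)\mapsto d(\beta)u$ is a bijection $D_\delta \times W_\delta \to W$, this rewrites $\Phi$ of the right-hand side as $\sum_{z \in W}(L(w):M(z))\,h_\pi(z\delta)$. So it remains to prove
\[
\sum_{z \in W}(L(w):M(z))\,h_\pi(z\delta) \;=\; |W^\pi|\sum_{z \in D^\pi}(L(w):M(z))\,h_\pi(z\delta),
\]
the right side being $|W^\pi|\,p^\pi_w(\delta)$ by the definition (\ref{newgoldie}).

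It is here that column-strictness of $A$ enters. I would first check that $w = d(\gamma(A))$ lies in $D^\pi$, i.e.\ is the longest element of its coset $W^\pi w$: if $k,k+1$ are consecutive boxes of one column of $\pi$, then column-strictness of $A$ gives $a_k > a_{k+1}$, hence $\delta_{w^{-1}(k)} > \delta_{w^{-1}(k+1)}$, and anti-dominance of $\delta$ forces $w^{-1}(k) > w^{-1}(k+1)$, which says exactly that $s_k w < w$; since this holds for every simple reflection $s_k$ of the standard parabolic $W^\pi$, the element $w$ is longest in $W^\pi w$. By the standard Kazhdan-Lusztig property that $P_{x,y}=P_{sx,y}$ whenever $sy<y$, it then follows that $P_{x,w}$ is constant along left $W^\pi$-cosets. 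Combining this with (\ref{bigkl2}) and the elementary length identity $\ell(vz') = \ell(z')-\ell(v)$ for $v\in W^\pi$ and $z'\in D^\pi$, one gets $(L(w):M(vz'))\,h_\pi(vz'\delta) = (L(w):M(z'))\,h_\pi(z'\delta)$ for all such $v,z'$; summing over $v\in W^\pi$ inside each block of the partition $W = \bigsqcup_{z'\in D^\pi}W^\pi z'$ and then over $z'\in D^\pi$ yields the displayed identity, and putting the two evaluations of $\Phi$ together proves the theorem.

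The main obstacle is not any single deep ingredient but the bookkeeping: one must arrange the two sign cancellations — the collapse of $\sum_{x}(-1)^{\ell(x)}h_\pi(x\gamma(B))$ on one side and of $\sum_{v}(L(w):M(vz'))h_\pi(vz'\delta)$ on the other — so that each produces exactly the overall factor $|W^\pi|$, and the point that makes the second one work is precisely the observation that column-strictness of $A$ places $d(\gamma(A))$ at the top of its $W^\pi$-coset. Once the semi-invariance of $h_\pi$ under $W^\pi$ and the length formulae for maximal-length parabolic coset representatives are recorded, the only Kazhdan-Lusztig input needed is that single invariance property, and the rest is formal.
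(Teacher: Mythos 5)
Your proof is correct, and it takes a genuinely different route from the one in the paper. The paper proves the theorem by re-indexing the sum directly: it lifts the $W$-action to tableaux, establishes a chain of combinatorial observations (chiefly that for $x \in D^\pi \cap D_\delta$ with $h_\pi(x\delta)\neq 0$ one has $D^\pi \cap W^\pi x W_\delta = xW_\delta$), and then uses Lemma~\ref{klthm} together with (\ref{klform}) to rewrite the left side as $\sum_{z\in D^\pi}(L(w):M(z))\,z^{-1}(h_\pi)(\delta)$; no Kazhdan--Lusztig identities beyond (\ref{klform}) are invoked. You instead re-use the intermediate display from the proof of Lemma~\ref{klthm}, apply the linear functional $[M(\mu)]\mapsto h_\pi(\mu)$, expand the right-hand side over the whole of $W$, and then collapse back to $D^\pi$ using two additional ingredients: the $W^\pi$-alternating property of $h_\pi$, and the Kazhdan--Lusztig invariance $P_{sx,y}=P_{x,y}$ when $sy<y$ (applied once you have verified, via column-strictness of $A$ and anti-dominance of $\delta$, that $w=d(\gamma(A))$ is longest in its coset $W^\pi w$). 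Both sign cancellations are handled correctly, and the bookkeeping with $\ell(vz')=\ell(z')-\ell(v)$ is right. The trade-off is that your argument requires the KL invariance $P_{sx,y}=P_{x,y}$ as an extra non-trivial input (and requires you to establish $w\in D^\pi$ up front), whereas the paper's argument avoids this by restricting the index set to $D^\pi$ from the outset via the double-coset observation; on the other hand, your approach makes the role of the alternating character of $h_\pi$ and the longest-coset-representative property of $w$ completely explicit, which the paper leaves implicit in its "routine observations."
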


\begin{proof}
Let $A$ and $\delta$
be fixed as in the statement of the
theorem.
Let $\Tab$ be the set of all $\pi$-tableaux
having the same content as $A$.
Notice that $\gamma$ restricts to a bijection
$\gamma:\Tab \rightarrow W \delta$.
Using this bijection we lift the action of $W$ on $\mathfrak{t}^*$
to an action
on $\Tab$, which is just the natural left action 
of the symmetric group $S_N$ on tableaux
given by place permutation of entries, indexing
entries in order down columns starting
from the leftmost column as usual.
Similarly we view functions in $\C[\mathfrak{t}^*]$ now
as functions on $\Tab$, so 
$x_i(B)$ is just the $i$th entry of $B$.
Let $S \in \Tab$ be the special tableau with $\gamma(S) = \delta$
and write simply $d(B)$ for $d({\gamma(B)})$
for $B \in \Tab$.
We make several routine observations:
\begin{itemize}
\item[(1)] The map 
$\Tab \rightarrow D_\delta, B \mapsto d(B)$ is a bijection
with inverse $x \mapsto x S$.
\item[(2)]
For any $x \in W$, we have that $h_\pi(x S) \neq 0$ if and only if
$xS$ has no repeated entries in any column.
\item[(3)] The set $D^\pi_\delta := D^\pi \cap D_\delta$ is a set
of $(W^\pi, W_\delta)$-coset representatives.
\item[(4)]
Assume $x \in W$ is such that $h_\pi(x S) \neq 0$.
Then we have that $x \in D^\pi$ if and only if $x S$ is column-strict.
\item[(5)]
The restriction of the bijection from (1) is a bijection 
between the set of all column-strict $B \in \Tab$
and the set $\{x \in D_\delta^\pi \:|\:h_\pi(x\delta)\neq 0\}$.
\item[(6)]
For $x \in D^\pi_\delta$ with $h_\pi(x\delta) \neq 0$,
we have that $D^\pi \cap (W^\pi x W_\delta) = x W_\delta$.
\end{itemize}
By Lemma~\ref{klthm} and (\ref{klform}),
then
(5), then (3) and (6), we get
that
\begin{align*}
{\sum_B} (L(A):M(B)) h_\pi(\gamma(B))
&=
{\sum_{B}}
\sum_{y \in W_\delta} 
(L(d(A)):M(d(B) y)) h_\pi(B)\\
&=\sum_{x \in D^\pi_\delta}
\sum_{y \in W_\delta}
(L(d(A)):M(xy)) h_\pi(x \delta)\\
&=
\sum_{z \in D^\pi}
(L(d(A)):M(z)) z^{-1}(h_\pi)(\delta).
\end{align*}
Comparing with (\ref{newgoldie}) this proves the theorem.
\end{proof}

\section{Main results}\label{sproofs}

In this section we prove Theorems~\ref{pti}--\ref{one} exactly as
formulated in the introduction.
We begin with the promised reproof of Premet's theorem.

\begin{proof}[Proof of Premet's Theorem~\ref{pti}]
We recall Joseph's algorithm for computing Goldie ranks
of primitive quotients of $U(\mathfrak{g})$
mentioned already in the introduction.
Let $\mathscr L(M,M)$ denote the space of all $\operatorname{ad}
\mathfrak{g}$-locally finite
maps from a left $U(\mathfrak{g})$-module $M$ to itself.
Joseph established the following statements.
\begin{itemize}
\item[(1)] (\cite[$\S$5.10]{JJI})
For any column-strict $\pi$-tableau $A$
we have that $$
\grk \mathscr L(M(A), M(A)) = h_\pi(\gamma(A)).
$$ 
(To state
Joseph's result in this way we have used 
(\ref{pv}) and (\ref{wdf}).)
\item[(2)] (\cite[$\S$8.1]{Jkos})
The following additivity principle holds:
$$
\grk 
\mathscr L(M(A), M(A)) = \sum_B [M(A):L(B)] \rk(B) 
$$
where $\rk(B):=\grk \mathscr L(L(B), L(B))$ if
$L(B)$ is a module of maximal Gelfand-Kirillov dimension in $\mathcal{O}_\pi$,
and $\rk(B) := 0$ otherwise.
(Again we are using the convention 
that summation over $B$ means
summation over 
all column-strict $\pi$-tableaux $B$
having the same content as $A$.)

\item[(3)] (\cite[$\S$9.1]{Jkos})
For any $\alpha \in \mathfrak{t}^*$,
 $\grk \mathscr L(L(\alpha), L(\alpha)) = \grk
U(\mathfrak{g}) / I(\alpha)$.
\end{itemize}
By (1)--(2) we get that
$h_\pi(\gamma(A))
=
{\sum_B}
[M(A):L(B)] \rk(B)$.
Inverting this gives that
$\rk(A) = \sum_B (L(A):M(B)) h_\pi(\gamma(B)).$
Recall also from (\ref{pv2}) 
that $L(A) \cong L(\gamma(A))$.
So using (3) and the
implication (1)$\Rightarrow$(4) from Theorem~\ref{bigt}
we have established 
that
\begin{equation}\label{josep}
\grk U(\mathfrak{g}) / I(\gamma(A)) =
{\sum_B} (L(A):M(B)) h_\pi(\gamma(B))
\end{equation}
for any semi-standard $\pi$-tableau $A$.
Now take any finite dimensional irreducible left $U(\mathfrak{g},e)$-module $L$.
By Corollary~\ref{altclass}, we may assume $L =
\overline{H}_0(\mathfrak{m}_\chi, L(A))$ for a semi-standard
$\pi$-tableau $A$.
Comparing Theorem~\ref{mydim} with Joseph's formula (\ref{josep}),
we see that
$\dim L = \grk U(\mathfrak{g}) / I(\gamma(A))$.
Finally observe that $I(\gamma(A)) = I(L)$ by 
Lemma~\ref{rect},
Theorem~\ref{labels} and Theorem~\ref{bigt}.
\end{proof}

For the rest of the section we assume that the pyramid $\pi$ is
left-justified, keeping $\lambda$ fixed as before.

\begin{proof}[Proof of Theorem~\ref{mt}]
It suffices to show 
for $\alpha \in \mathfrak{t}^*_\la$
that $\grk U(\mathfrak{g}) / 
I(\alpha) = 1$ if and only if $Q(\alpha)$ is row-equivalent
to a column-connected tableau.
By Theorem~\ref{labels2}, we have that $I(\alpha) = I(L(A,e))$
where $A$
is any column-strict tableau
that is row-equivalent to $Q(\alpha)$.
Hence by Theorem~\ref{pti}, 
we see that $\grk U(\mathfrak{g}) / I(\alpha)
= \dim L(A,e)$.
Now apply Theorem \ref{class}.
\end{proof}

\begin{proof}[Proof of Theorem~\ref{sep}]
We may assume that $\alpha \in \mathfrak{t}^*_\lambda$ and
that $Q(\alpha) \sim A$ for some column-separated
tableau $A$.
By Theorem~\ref{labels2} and Theorem~\ref{msup}, we 
deduce that
$I(\alpha) = I(L(A,e)) = \operatorname{ann}_{U(\mathfrak{g})} M(A)$.
Moreover by Theorem~\ref{pti} and Theorem~\ref{sep2}, we have that
$$
\grk U(\mathfrak{g}) / I(\alpha)
=
\grk U(\mathfrak{g}) / I(L(A,e))
=
\dim L(A,e) = \dim V(A,e) = \dim V(A).
$$
It remains to observe from the definition (\ref{pv}) that 
$M(A) \cong U(\mathfrak{g}) \otimes_{U(\mathfrak{p})} F$
where $F$ is as in the statement of Theorem~\ref{sep},
and also $\dim V(A) = \dim F$ since they are equal up to tensoring by
a one dimensional representation.
\end{proof}

\begin{proof}[Proof of Theorem~\ref{red}]
Take any $\alpha \in \mathfrak{t}_\la^*$
and set $A := Q(\alpha)$.
Then for each $z \in \C$
let $A_z$ be the tableau obtained by erasing all entries of $A$
that are not in $z + \Z$, subtracting $z$
from all remaining entries, and then sliding all
boxes
to the left to get a left-justified tableau with integer entries.
It is clear from the definition of $Q(\alpha)$
that each $A_z$ is a column-strict tableau, indeed, 
$A_z = Q(\alpha_z)$ for $\alpha_z$ as in the statement of Theorem~\ref{red}.
Finally let $e_z$ be the nilpotent in $\mathfrak{g}_z$
associated to the pyramid of the same shape as $A_z$.
Applying Theorem~\ref{dimred} (perhaps several times) we get that
$$
\dim L(A,e) = \prod_{z} \dim L(A_z,e_z)
$$
where the product is over a set of coset representatives for $\C$ modulo $\Z$.
This implies Theorem~\ref{red} thanks to Theorem~\ref{labels2} and
Theorem~\ref{pti}.
\end{proof}

\begin{proof}[Proof of Theorem~\ref{myg}]
We may assume that $w$ is minimal in its left cell
and that $Q(w)$ is of shape $\lambda$.
Take any regular anti-dominant $\delta$ and set $\alpha := w \delta \in
\widehat{C}_w$.
Since the entries of $Q(\alpha)$ satisfy the same 
system of inequalities as the entries of
$Q(w)$,
we see that $Q(\alpha) \sim B$ for a column-separated tableau $B$
which is obtained from $Q(\alpha)$ by permuting entries within rows in
exactly the same way as $A$ is obtained from $Q(w)$.
Theorem~\ref{sep} tells us that 
$\grk U(\mathfrak{g}) /
I(\alpha)$ is the dimension of the
irreducible $\mathfrak{h}$-module of highest weight $\gamma(B) -
\rho$, where $\mathfrak{h}$ is the standard Levi subalgebra
$\mathfrak{gl}_{\lambda_1'}(\C) \oplus 
\mathfrak{gl}_{\lambda_2'}(\C) \oplus\cdots$
and $\la'=(\la'_1\geq\la'_2\geq\cdots)$ 
is the transpose of $\la$.
Using the Weyl dimension formula for $\mathfrak{h}$ we deduce
that
$$
\grk U(\mathfrak{g}) /
I(\alpha)
= h_\la(\gamma(B)).
$$
Using (\ref{minimal}), the definition of $h_\lambda$ from the
statement of Theorem~\ref{foldie}, 
and the assumption that $w$ is minimal in its left cell, 
the right hand side here is the same as
$$
\bigg(\prod_{(i,j)} \frac{x_{w(i)}-x_{w(j)}}{d(i,j)} \bigg)(\gamma(Q(\alpha)))
=\bigg(\prod_{(i,j)} \frac{x_i-x_j}{d(i,j)}\bigg) (\delta)
$$
where the product is over pairs $(i,j)$ as in the statement
of the theorem.
By the definition (\ref{goldiedef}), 
this establishes
that
$p_w$ and $\prod_{(i,j)} (x_i-x_j) / d(i,j)$
take the same values on all regular anti-dominant $\gamma$. The theorem follows by density.
\end{proof}

\begin{proof}[Proof of Joseph's Theorem~\ref{foldie}]
Take any $w \in W$ that is minimal in its left cell, and assume 
that $Q(w)$ has shape $\lambda$.
Take any regular anti-dominant $\delta$. Set $\alpha := w \delta \in
\widehat{C}_w$ and $A := Q(\alpha)$, which is 
a semi-standard tableau of shape $\lambda$.
By (\ref{uc}) and (\ref{minimal}), we have that
$d(\alpha) = w$ and $\gamma(A) = \alpha$.
So Theorems~\ref{mydim} and \ref{maing} give that
$\dim 
\overline{H}_0(\mathfrak{m}_\chi, L(A))= 
p^\pi_w(\delta).$
By
Lemma~\ref{rect},
Theorem~\ref{labels} and Theorem~\ref{bigt}
we know that
$I(\overline{H}_0(\mathfrak{m}_\chi, L(A))) = I(\alpha)$.
Hence by Theorem~\ref{pti} 
we deduce that
$$
\grk U(\mathfrak{g}) / I(\alpha)  = 
p^\pi_w(\delta).
$$
(This equality can also be deduced
without finite $W$-algebras 
using Theorem~\ref{maing} and
Joseph's (\ref{josep}) directly.)
Comparing with
(\ref{goldiedef}) we have therefore shown that
$p_w(\delta) = p^\pi_w(\delta)$
for all $\delta$ in a Zariski dense subset of $\mathfrak{t}^*$,
so $p_w = p^\pi_w$. It remains to observe that the polynomial
$p^\pi_w$ from (\ref{newgoldie}) is the same as the one in on the right hand
side of (\ref{bform}) in the left-justified case.
\end{proof}

\begin{proof}[Proof of Theorem~\ref{one}]
Let $w \in W$ be minimal in its left cell, and assume that $Q(w)$ is of shape
$\lambda$.
Like in the proof of Theorem~\ref{maing}, we use the map $\gamma$ from (\ref{newgamma}) to lift the action of $W$ on $\mathfrak{t}^*$ to an action
on tableaux of shape $\lambda$ by place permutation.
Let $\Tab$ be the set 
of all tableaux of shape $\lambda$ with entries
$\{1,\dots,N\}$ and $S\in \Tab$ be the unique tableau 
with $\gamma(S) = -\rho$. 
We obviously get a bijection 
$W \rightarrow \Tab, w \mapsto w S$.
For any $x \in W$ we have that $x \in D^\lambda$ if and only if 
$x S$ is column-strict, so our bijection 
identifies $D^\lambda$ with the column-strict tableaux in $\Tab$.
Under this identification, it is well known that
the usual Bruhat order $\geq$ on $D^\lambda$ corresponds to the
partial order $\geq$ on column-strict tableaux
such that $A \geq B$ if and only if 
we can
pass from column-strict tableau $A$ to column-strict tableau $B$
by repeatedly applying the following basic move:
\begin{itemize}
\item[(1)]
 find entries $i > j$ in $A$ such that the column containing $i$
is strictly to the left of the column containing $j$;
\item[(2)]
interchange these entries then re-order entries within columns
to obtain another column-strict tableau.
\end{itemize}

Now to prove the result,
let $C$ be the tableau 
from the statement of Theorem~\ref{one}.
Using the explicit formula for $p_w$ from Theorem~\ref{foldie}, we 
need to show that
$$
\sum_{z \in D^\lambda}
(L(w):M(z)) h_\lambda(z w^{-1} \gamma(C))  = 1.
$$
By (\ref{rel}) and (\ref{minimal}) we know that $w S = Q(w)$, which is standard
so certainly column-strict, hence $w \in D^\lambda$.
So there is a term in the above sum with $z=w$, and for this $z$ it is obvious that
$(L(w):M(z)) h_\lambda(z w^{-1}\gamma(C))
= h_\lambda(\gamma(C)) = 1$.
Since $(L(w):M(z)) = 0$ unless $z \leq w$ in the Bruhat order on $W$,
it remains to show that
$h_\lambda(z w^{-1} \gamma(C)) = 0$
for any $z \in D^\lambda$ such that $z < w$.
To see this, take such an element $z$ and let $A := w S$ and $B := z S$,
so $A$ is standard, $B$ is column-strict and 
$A > B$ (in the partial order on column-strict tableau defined in the first
paragraph of the proof).
In the next paragraph, we show that there exist 
$1 \leq i < j \leq N$ such that the numbers $i$ and $j$
appear in the same row of $A$ and in the same column of $B$.
We deduce in the notation from $\S$\ref{s1d}
that
$\row(w(i)) = \row(w(j))$
and $\col(z(i)) = \col(z(j))$.
Hence
$$
(x_{z(i)} - x_{z(j)})(z w^{-1} \gamma(C))
=
(x_{i} - x_{j})(w^{-1} \gamma(C))
=
(x_{w(i)} - x_{w(j)})(\gamma(C)) = 0
$$
and
$x_{z(i)}-x_{z(j)}$ is a linear factor of $h_\lambda$.
This implies that $h_\lambda(z w^{-1} \gamma(C)) = 0$ as required.

It remains to prove the following claim: 
given tableaux $A > B$ of shape $\lambda$ with $A$
standard and $B$ column-strict,
there exist $1 \leq i < j \leq N$ such that $i$ and $j$ appear in the same row
of $A$ and in the same column of $B$.
To see this, let $A_{\leq j}$ (resp.\ $B_{\leq j}$) denote the diagram 
obtained from $A$ (resp.\ $B$) by removing all boxes containing entries
$> j$. 
Choose $1 \leq j \leq N$ so that $A_{\leq (j-1)} = B_{\leq (j-1)}$
but $A_{\leq j} \neq B_{\leq j}$.
Suppose that $j$ appears in column $c$ of $B$,
and observe as $A > B$ that this column is strictly to the left of the
column of $A$ containing $j$.
Suppose also that $j$ appears in row $r$ of $A$,
and observe as $A$ is standard
that this row is strictly below the row of $B$ containing $j$.
As $A_{\leq (j-1)} = B_{\leq (j-1)}$ and $B$ is column-strict,
$A$ and $B$ have the same entry $i \leq j-1$ in row $r$ and column $c$.
Thus the entries $i$ and $j$ lie in the same row $r$ of $A$,
and in the same column $c$ of $B$.
\end{proof}

\end{document}